\newtheorem{theorem}{Theorem}[section]
\numberwithin{equation}{subsection}
\newtheorem{lemma}[theorem]{Lemma}
\newtheorem{proposition}[theorem]{Proposition}
\newtheorem{corollary}[theorem]{Corollary}
\theoremstyle{definition}
\newtheorem{definition}[theorem]{Definition}
\newtheorem{definitions}[theorem]{Definitions}
\theoremstyle{remark}
\newtheorem{remark}[theorem]{Remark}
\newtheorem{example}[theorem]{Example}
\newtheorem{problem}[theorem]{Problem}
\newcommand{\mysubsection}[1]{\refstepcounter{subsection}\vspace{0.3cm}\noindent \arabic{section}.\arabic{subsection}.}
\newcommand\dd{{d_2}}
\newcommand\F{{\mathcal F}}
\newcommand\C{{\mathbb C}}
\newcommand\R{{\mathbb R}}
\newcommand\Z{{\mathbb Z}}
\newcommand\vep{{e}}
\newcommand{\binomial}[2]{\genfrac{(}{)}{0pt}{}{#1}{#2}}
\newcommand{\Id}{{ I}}
\newcommand{\Inv}[1]{{\Phi({#1})}}
\newcommand{\CatA}[1]{{\text{Cat}_\text{A}(#1)}}
\newcommand{\CatB}[1]{{\text{Cat}_\text{B}(#1)}}
\newcommand{\W}{{\mathcal W}}
\newcommand{\X}{{\mathcal X}}
\newcommand{\Y}{{\mathcal Y}}
\newcommand{\ep}{\varepsilon}
\newcommand{\phantomelement}{\varnothing}
\newcommand{\CC}{\mathbb{C}}
\newcommand{\ZZ}{\mathbb{Z}}
\newcommand{\GL}{\operatorname{GL}}
\renewcommand{\geq}{\geqslant}
\renewcommand{\leq}{\leqslant}
\long\def\tlist@if@empty@nTF #1{%
\expandafter\ifx\expandafter\\\detokenize{#1}\\%
\expandafter\@firstoftwo
\else
\expandafter\@secondoftwo
\fi
}
\providecommand*\wo[1][]{%
J
\tlist@if@empty@nTF{#1}{}{ _#1}
}
\newcommand{\resetdiagramdefaults}{%
\newgray{minusfill}{0.80}
\newgray{minusedge}{0.65}
\newgray{numcolor}{0}
\psset{unit=0.75cm}
}
\newcommand{\darkergrays}{%
\newgray{minusfill}{0.80}
\newgray{minusedge}{0.65}
}
\newcommand{\putplus}[2]{%
\pspolygon[fillstyle=none,linecolor=minusedge]%
(!#1 #2 [0.5 0.5 0.5 -0.5 -1.5 1.0] transform)%
(!#1 #2 [0.5 0.5 0.5 -0.5 -2.0 0.5] transform)%
(!#1 #2 [0.5 0.5 0.5 -0.5 -1.5 0.0] transform)%
(!#1 #2 [0.5 0.5 0.5 -0.5 -1.0 0.5] transform)
\rput(!#1 #2 [0.5 0.5 0.5 -0.5 -1.5 0.5] transform){\tiny $+$}
}
\newcommand{\putplusbox}[2]{
\pspolygon[fillstyle=none,linecolor=minusedge]%
(!#1 #2 [0.5 0.5 0.5 -0.5 -1.5 1.0] transform)%
(!#1 #2 [0.5 0.5 0.5 -0.5 -2.0 0.5] transform)%
(!#1 #2 [0.5 0.5 0.5 -0.5 -1.5 0.0] transform)%
(!#1 #2 [0.5 0.5 0.5 -0.5 -1.0 0.5] transform)
}
\newcommand{\putminus}[2]{%
\pspolygon[fillstyle=solid,fillcolor=minusfill,linecolor=minusedge]%
(!#1 #2 [0.5 0.5 0.5 -0.5 -1.5 1.0] transform)%
(!#1 #2 [0.5 0.5 0.5 -0.5 -2.0 0.5] transform)%
(!#1 #2 [0.5 0.5 0.5 -0.5 -1.5 0.0] transform)%
(!#1 #2 [0.5 0.5 0.5 -0.5 -1.0 0.5] transform)
\rput(!#1 #2 [0.5 0.5 0.5 -0.5 -1.5 0.5] transform){\tiny $-$}
}
\newcommand{\putminusbox}[2]{
\pspolygon[fillstyle=solid,fillcolor=minusfill,linecolor=minusedge]%
(!#1 #2 [0.5 0.5 0.5 -0.5 -1.5 1.0] transform)%
(!#1 #2 [0.5 0.5 0.5 -0.5 -2.0 0.5] transform)%
(!#1 #2 [0.5 0.5 0.5 -0.5 -1.5 0.0] transform)%
(!#1 #2 [0.5 0.5 0.5 -0.5 -1.0 0.5] transform)
}
\newcommand{\putbigminus}[4]{%
\pspolygon[fillstyle=solid,fillcolor=minusfill,linecolor=minusedge]%
(!#3 #2 [0.5 0.5 0.5 -0.5 -1.5 1.0] transform)
(!#1 #2 [0.5 0.5 0.5 -0.5 -2.0 0.5] transform)
(!#1 #4 [0.5 0.5 0.5 -0.5 -1.5 0.0] transform)
(!#3 #4 [0.5 0.5 0.5 -0.5 -1.0 0.5] transform) 
\rput(!#1 #3 add 2 div #2 #4 add 2 div [0.5 0.5 0.5 -0.5 -1.5 0.5] transform){\huge $-$}
}
\newcommand{\putbigminusbox}[4]{%
\pspolygon[fillstyle=solid,fillcolor=minusfill,linecolor=minusedge]%
(!#3 #2 [0.5 0.5 0.5 -0.5 -1.5 1.0] transform)
(!#1 #2 [0.5 0.5 0.5 -0.5 -2.0 0.5] transform)
(!#1 #4 [0.5 0.5 0.5 -0.5 -1.5 0.0] transform)
(!#3 #4 [0.5 0.5 0.5 -0.5 -1.0 0.5] transform) 
}
\newcommand{\putbigplus}[4]{%
\pspolygon[fillstyle=solid,fillcolor=white,linecolor=minusedge]%
(!#3 #2 [0.5 0.5 0.5 -0.5 -1.5 1.0] transform)
(!#1 #2 [0.5 0.5 0.5 -0.5 -2.0 0.5] transform)
(!#1 #4 [0.5 0.5 0.5 -0.5 -1.5 0.0] transform)
(!#3 #4 [0.5 0.5 0.5 -0.5 -1.0 0.5] transform) 
\rput(!#1 #3 add 2 div #2 #4 add 2 div [0.5 0.5 0.5 -0.5 -1.5 0.5] transform){\huge $+$}
}
\newcommand{\putbigplusbox}[4]{%
\pspolygon[fillstyle=solid,fillcolor=white,linecolor=minusedge]%
(!#3 #2 [0.5 0.5 0.5 -0.5 -1.5 1.0] transform)
(!#1 #2 [0.5 0.5 0.5 -0.5 -2.0 0.5] transform)
(!#1 #4 [0.5 0.5 0.5 -0.5 -1.5 0.0] transform)
(!#3 #4 [0.5 0.5 0.5 -0.5 -1.0 0.5] transform) 
}
\newcommand{\putinsquare}[3]{%
\rput(!#1 #2 [0.5 0.5 0.5 -0.5 -1.5 0.5] transform){#3}
}
\newcommand{\NWSEline}[2]{%
\psline[linecolor=minusedge]%
(!#1 #1 1 add [0.5 0.5 0.5 -0.5 -1.5 1.0] transform)
(!#1 #2 1 add [0.5 0.5 0.5 -0.5 -1.0 0.5] transform) 
}
\newcommand{\NESWline}[1]{%
\psline[linecolor=minusedge]%
(! 1 #1 1 add [0.5 0.5 0.5 -0.5 -2.0 0.5] transform)
(!#1 #1 1 add [0.5 0.5 0.5 -0.5 -1.5 1.0] transform) 
}
\newcommand{\drawgrid}[1]{
\multido{\n=1+1}{#1}{
\NWSEline{\n}{#1}
\NESWline{\n}
}
\psline[linecolor=minusedge]%
(!1 2 [0.5 0.5 0.5 -0.5 -2.0 0.5] transform)
(!1 #1 1 add [0.5 0.5 0.5 -0.5 -1.5 0.0] transform) 
\psline[linecolor=minusedge]%
(! 1 #1 1 add [0.5 0.5 0.5 -0.5 -1.5 0.0] transform)
(!#1 #1 1 add [0.5 0.5 0.5 -0.5 -1.0 0.5] transform) 
}
\newcommand{\putnum}[1]{%
\rput(!#1 #1 [0.5 0.5 0.5 -0.5 -1.5 0.5] transform){\tiny\color{numcolor} #1}
}
\newcommand{\putnums}[1]{%
\multido{\n=1+1}{#1}{%
\putnum{\n}
}
}
\newcommand{\Youngbox}[2]{%
\pspolygon[fillstyle=solid,fillcolor=minusfill,linecolor=minusedge]%
(!#1 #2 [0.7071067811  0 0 0.7071067811 -0.35355339059  0.35355339059] transform)
(!#1 #2 [0.7071067811  0 0 0.7071067811 -0.35355339059 -0.35355339059] transform)
(!#1 #2 [0.7071067811  0 0 0.7071067811  0.35355339059 -0.35355339059] transform)
(!#1 #2 [0.7071067811  0 0 0.7071067811  0.35355339059  0.35355339059] transform) 
}
\newcommand{\Youngrow}[2]{%
\multido{\n=1+1}{#2}{%
\Youngbox{\n}{#1}
}
}
\newcommand{\GraphPoint}[2]{%
\pscircle[fillstyle=solid,fillcolor=white](#1,#2){0.23}
}
\title[Decomposing Inversion Sets and the Littlewood-Richardson Cone]{Decomposing Inversion Sets of Permutations and Applications to Faces of the Littlewood-Richardson Cone}
\author[Dewji]{R. Dewji}
\address{Department of Mathematics and Statistics
Queen's University \\ King\-ston, Ontario, Canada \\ K7L 3N6
}
\email{rian.dewji@gmail.com}
\author[Dimitrov]{I. Dimitrov}
\address{Department of Mathematics and Statistics
Queen's University \\ King\-ston, Ontario, Canada \\ K7L 3N6
}
\email{dimitrov@mast.queensu.ca}
\author[McCabe]{A. McCabe}
\address{760 Lawrence Ave. West\\ Toronto, Ontario, Canada\\ M6A 1B7}
\email{adam.r.mccabe@gmail.com}
\author[Roth]{M. Roth}
\address{Department of Mathematics and Statistics
Queen's University \\ King\-ston, Ontario, Canada \\ K7L 3N6
}
\email{mikeroth@mast.queensu.ca}
\author[Wehlau]{D. Wehlau}
\address{Department of Mathematics and Computer Science \\
Royal Military College \\ King\-ston, Ontario, Canada \\ K7K 5L0
}
\email{wehlau@rmc.ca}
\author[Wilson]{J. Wilson}
\address{ Department of Mathematics \\
Stanford University \\
Building 380, Sloan Hall \\
Stanford, California 94305}
\email{jchw@stanford.edu}
\date{\today}
\subjclass[2010]{05E15; 05A05, 05E10, 52B20}                                          
\begin{document}
\maketitle

\begin{abstract}
If $\alpha \in S_n$ is a permutation of $\{1, 2, \ldots, n\}$, the inversion set of $\alpha$ is
$\Phi(\alpha) = \{ (i, j) \, | \,  1 \leq i < j \leq n, \alpha(i) > \alpha(j)\}$. We describe all $r$-tuples
$\alpha_1, \alpha_2, \ldots, \alpha_r \in S_n$ such that $\Delta_n^+ = \{ (i, j) \, | \,  1 \leq i < j \leq n\}$ is
the disjoint union of $\Phi(\alpha_1), \Phi(\alpha_2), \ldots, \Phi(\alpha_r)$. Using this description we
prove that certain faces of the Littlewood-Richardson cone are simplicial and provide an algorithm for
writing down their sets of generating rays. We also discuss analogous problems for
the Weyl groups of root systems of types $B$, $C$ and $D$ providing solutions for types $B$ and $C$.
Finally we provide some enumerative results and introduce a useful tool for visualizing inversion sets.

\noindent
Keywords: Inversion set, Simple permutation, Littlewood-Richardson cone, Catalan numbers.
\end{abstract}

\section{Introduction}\label{Introduction}

\mysubsection{} Given a positive integer $n$, we set
$$
\Delta_n^+ := \{(i, j) \ | \, 1 \leq  i< j \leq n\}.
$$
In accordance with terminology from Lie Theory, we will refer to the elements of $\Delta^+_n$ as {\it positive roots},
the element $(1,n)$ is the {\it highest root}, and the elements $(i,i+1)$ with $1\leq i \leq n-1$ are the {\it simple roots}.

We describe an element $\alpha \in S_n$ as a function, writing
$\alpha=(\alpha(1),\alpha(2),\dots,\alpha(n))$, and define {\it the inversion set of $\alpha$}, $\Phi(\alpha)$, by
$$
\Phi(\alpha) := \{ (i,j) \in \Delta_n^+ \, | \, \alpha(i) > \alpha(j)\}.
$$
We use
$\Id_n$ for the identity permutation: $\Id_n = (1,2,\dots,n)$, and $\wo[n]$ for the longest element:
$\wo[n] = (n,n-1,\dots,1) \in S_n$. (The term ``longest element'' also comes from Lie Theory.)
Note that
$\Inv{\Id_n} = \emptyset$ and
$\Inv{\wo[n]} = \Delta_n^+$.
It is not hard to see that the element $\alpha \in S_n$ is determined by its inversion set $\Inv\alpha$.
Thus there are exactly $n!$ subsets of $\Delta_n^{+}$ which are inversion sets.

Throughout the paper we use the following notational conventions. We use the symbol $\sqcup$ to denote a disjoint union.
Often we will write $\Id$, $\wo$, $\Delta^+$, etc. instead of  $\Id_n$, $\wo[n]$, $\Delta^+_n$, etc. when the value of $n$ is clear from the context.

\begin{definition}
A {\it decomposition} of an inversion set $\Inv\alpha$ is a set of disjoint inversion sets
$\Inv{\alpha_1},\Inv{\alpha_2},\dots,\Inv{\alpha_r}$ such that
$$\Inv\alpha = \Inv{\alpha_1} \sqcup \Inv{\alpha_2} \sqcup \dots \sqcup \Inv{\alpha_r}\ .$$
Note that $\Inv{\Id}=\emptyset$ may occur in a decomposition.
The decomposition is called {\it trivial} if $\Inv\alpha=\Inv{\alpha_a}$ for some $a$ with $1 \leq a \leq r$, and
hence that $\alpha_i=\Id$ for all $i\neq a$.
\end{definition}

We say that an element $\alpha \in S_n$ (and its inversion set $\Inv\alpha$) is {\it reducible} if there exists a
non-trivial decomposition of $\Inv\alpha$.
Otherwise we say that $\alpha$ (and $\Inv\alpha$) is {\it irreducible}.
We call a decomposition as above an {\em irreducible decomposition} if each $\Inv{\alpha_i}$ is irreducible.

Solving the following problem was the motivation for this article.
\begin{problem}\label{main problem}
Describe all decompositions of $\Delta_n^{+}$:
$$\Delta_n^+ = \Inv{\alpha_1} \sqcup \Inv{\alpha_2} \sqcup \dots \sqcup \Inv{\alpha_r}\ .$$
The order of the inversion sets above is  irrelevant.
\end{problem}

\mysubsection{}
We are interested in this problem because of its relation to two other problems:
\begin{enumerate}
\item[(i)] determining the regular codimension $n$ faces of the Littlewood-Richardson cone;
\item[(ii)] studying the cup product of the cohomology of line bundles on homogeneous varieties.
\end{enumerate}

We briefly describe these two problems in the next paragraphs.

\medskip
\noindent
{\bf The Littlewood-Richardson cone.}
If $A$ is a Hermitian matrix, denote by $\lambda = (\lambda_1 \geq \lambda_2 \geq \ldots
\geq \lambda_n) \in \R^n$ its eigenvalues and let
$\R^{3n}_+ = \{(\lambda, \mu, \nu) \, | \, \lambda_i \geq \lambda_{i+1}, \mu_i \geq \mu_{i+1}, \nu_i \geq \nu_{i+1} {\text { for }} 1 \leq i \leq n-1\}$.
In 1912 H. Weyl posed the following question: For which triples $(\lambda, \mu, \nu) \in \R^{3n}_+$ do there
exist Hermitian matrices $A,B,C$ such that $C = A+ B$ and whose eigenvalues are $\lambda, \mu,\nu$ respectively. In 1962 A. Horn proved that
the set of such triples is a polyhedral cone $\mathcal{C'}$ and conjectured inequalities determining $\mathcal{C'}$.
Horn's conjecture was proved in the 1990's by
Klyachko and Knutson and Tao, see \cite{F} for a nice exposition on Horn's conjecture.  It is worth mentioning that the lattice points of $\mathcal{C'}$
are exactly the triples $(\lambda, \mu, \nu)$ for which the corresponding Littlewood-Richardson coefficient $c_{\lambda, \mu}^\nu$
is nonzero. It is often convenient to study the cone $\mathcal{C''}$ corresponding to the relation $A+B+C = 0$ instead of $\mathcal{C'}$
corresponding to $C = A+B$ thus symmetrizing the roles of $\lambda, \mu$, and $\nu$.
N. Ressayre \cite{R} described all {\it regular faces of $\mathcal{C''}$}, i.e. faces that intersect the interior of $\R^{3n}_+$.
The regular faces of codimension $n$ are in a bijection with triples $\alpha_1, \alpha_2, \alpha_3$ of elements of $S_n$
with the property that $\Delta_n^+ = \Inv{\alpha_1} \sqcup \Inv{\alpha_2} \sqcup \Inv{\alpha_3}$: each regular face of codimension $n$ is
the intersection of $\R^{3n}_+$ with the subspace of codimension $n$ defined by
$$
\alpha_1^{-1} \lambda +\alpha_2^{-1} \mu + \alpha_3^{-1} \nu = 0
$$
for the corresponding triple $\alpha_1, \alpha_2, \alpha_3$.

\medskip
\noindent
{\bf Cup products of line bundles on homogeneous varieties.} Let $G = GL_n(\C)$, let $B \subset G$ be a Borel
subgroup, and let $X = G/B$.  The Picard group of $X$ is isomorphic to $\Z^n$ and hence the line bundles on $X$
are parametrized by $\Z^n$. We denote by $\mathcal{L}_\lambda$ the line bundle on $X$ which corresponds to the
$B$-character $-\lambda$.  We call $\lambda \in \Z^n$ {\it dominant} if
$\lambda_1 \geq \lambda_2 \geq \ldots \geq \lambda_n$ and {\it strictly dominant} if
$\lambda_1 > \lambda_2 > \ldots > \lambda_n$.  Let $\rho = (n-1, n-2, \ldots, 0) \in \Z^n$.
We call $\lambda \in \Z^n$ {\it regular} if there exists $\alpha \in S_n$ such that
$\alpha \cdot \lambda := \alpha(\lambda + \rho) - \rho$ is dominant. Such an element $\alpha$ is uniquely
determined by $\lambda$ and we denote it by $\alpha_\lambda$.
The celebrated Borel-Weil-Bott theorem calculates the cohomology groups $H^q(X, \mathcal{L}_\lambda)$.
In particular, it states that
$H^q(X, \mathcal{L}_\lambda)$ is zero unless $\lambda$ is regular and $q$ equals the length of $\alpha_\lambda$.
In this case,
$H^q(X, \mathcal{L}_\lambda) \cong V(\alpha_\lambda \cdot \lambda)^*$, where for any dominant weight $\mu$,
$V(\mu)$ denotes the irreducible $G$-module with highest weight $\mu$.
In \cite{DR} two of us studied the following question: For what pairs $\lambda, \mu \in \Z^n$ is the cup product map
$$
H^{q_1}(X, \mathcal{L}_\lambda) \otimes H^{q_2}(X, \mathcal{L}_\mu) \stackrel{\cup}{\longrightarrow} H^{q_1+q_2}(X, \mathcal{L}_{\lambda + \mu})
$$
nonzero provided that the all cohomology groups above are nonzero? Theorem~I in \cite{DR} states that
the cup-product map above is non-zero if and only if
$\Phi(\alpha_{\lambda+ \mu}) = \Phi({\alpha_\lambda}) \sqcup \Phi({\alpha_\mu})$.
This turns out (see Lemma~\ref{complement lemma}) to be equivalent to the condition that
$\Delta_n^{+} = \Phi({\alpha_\lambda}) \sqcup \Phi({\alpha_\mu}) \sqcup \Phi(\wo[n]\alpha_{\lambda+ \mu})$.

Both of these motivating problems have versions involving an arbitrary number of factors, (i.e., the
sum of $r$ matrices, or the cup product of $r$ cohomology groups), and their solutions are similarly
expressed as decompositions of $\Delta_{n}$ with $r+1$ factors.   We were thus led to consider
Problem~\ref{main problem}.

\mysubsection{} \label{basic defs section}
Before we state the main results of the paper, we introduce some concepts and state background results.
\begin{definition}
An {\it interval} (of size $t$) is a set of consecutive integers $\{i,i+1,i+2,\dots,i+t-1\}$.
For  a permutation $\alpha \in S_n$, a  {\it block} (of size $t$) of
$\alpha$ is an interval $\{i,i+1,i+2,\dots,i+t-1\}$ of size $t$ such that the set
$\{\alpha(i),\alpha(i+1),\dots,\alpha(i+t-1)\}$ is also an interval (of size $t$).
Every permutation in $S_n$ has $n$ blocks of size 1 and a block of size $n$.
If $\alpha \in S_n$ has no blocks of size $t$ for all $1 < t < n$ then we say that $\alpha$ is
{\it simple}\footnote{We warn the reader that some authors use the terminology {\it connected} rather than simple.}.
\end{definition}

\begin{example}
The permutation $(9,7,1,5,3,4,6,8,2)\in S_9$ has a block of size 8 corresponding to the interval $\{2,3,4,5,6,7,8,9\}$ and a block
size 4 corresponding to the interval $\{4,5,6,7\}$.  The permutation $(5,2,6,1,4,7,3)\in S_7$ has no non-trivial blocks and so is
simple.
\end{example}

To state our results we need to introduce an inflation procedure to describe permutations inductively.
We describe this procedure heuristically as follows.   We consider a permutation on $n$ letters as a shuffling of a
deck of $n$ cards.
To shuffle, we first cut the deck into $m$ piles of sizes $z_1,z_2,\dots,z_m$ respectively.
Shuffle each of these piles according to a permutation $\beta_i \in S_{z_i}$.
Finally reassemble the piles in an order determined by a permutation $\sigma \in S_m$.
The resulting permutation in $S_n$ is denoted by $\sigma[\beta_1,\beta_2,\dots,\beta_m]$ and
is called an {\em inflation of $\sigma$ by $\beta_1, \beta_2, \ldots, \beta_m$}.
For a formal characterization of inflation see \S\ref{section inflation};
for a history of the inflation procedure and a discussion of a number of applications we refer the reader
to the survey article of Brignall \cite{B}.

Note that a permutation $\alpha\in S_n$ is simple if and only if $\alpha$ cannot be expressed as
an inflation $\alpha=\sigma[\beta_1,\beta_2,\dots,\beta_m]$ with $\sigma\in S_m$ and $2 \leq m \leq n-1$.

\begin{definition}
A permutation $\alpha \in S_n$ is called {\it plus-decomposable} if $\alpha$ may be written in the form
$\alpha = \Id_2[\beta_1,\beta_2]$.  Otherwise $\alpha$ is {\it plus-indecomposable}.
Similarly, $\alpha\in S_n$ is called {\it minus-decomposable} if $\alpha$ may be written in the form
$\alpha= \wo[2][\beta_1,\beta_2]$.  Otherwise $\alpha$ is {\it minus-indecomposable}.
\end{definition}

 We follow \cite{AAK} in using the terms ``plus-indecomposable'' and ``minus-indecomposable''.  These are called
  ``sum indecomposable'' and ``skew indecomposable'', respectively, in \cite{B}. It is not difficult to verify that
  $\alpha \in S_n$ cannot be both plus-decomposable and minus-decomposable. On the other hand, there are
  permutations which are both plus-indecomposable and minus-indecomposable, e.g. every simple $\alpha \in S_n$
with $n>2$.

The following theorem of Albert, Atkinson and Klazar illustrates the importance of simple permutations and the
inflation procedure.

\begin{theorem}[{\cite[Theorem~1]{AAK}}]
Let $n \geq 2$.  For every permutation $\alpha \in S_n$ there exists a simple permutation $\sigma \in S_m$ and
permutations $\beta_1,\beta_2,\dots,\beta_m$ such that $\alpha = \sigma[\beta_1,\beta_2,\dots,\beta_m]$.
Moreover if $\sigma \ne \Id_2$ and $\sigma\neq \wo[2]$ then $\beta_1,\beta_2,\dots,\beta_m$ and $\sigma$ are unique.
If $\sigma = \Id_2$ then $\beta_1,\beta_2$ and $\sigma$ are unique if we add the additional condition that
$\beta_1$ is plus-indecomposable.
Similarly, if $\sigma = \wo[2]$ then $\beta_1,\beta_2$ and $\sigma$ are unique if we add the additional condition that
$\beta_1$ is minus-indecomposable.    \qed
\end{theorem}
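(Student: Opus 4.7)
The plan is induction on $n$. The base case $n=2$ is immediate: both $\Id_2$ and $\wo[2]$ are simple (their only blocks have size $1$ or $2$), so we take $\sigma=\alpha$, $m=2$, and $\beta_1=\beta_2=\Id_1$.

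For the inductive step with $n>2$, if $\alpha$ is already simple we take $\sigma=\alpha$, $m=n$, $\beta_i=\Id_1$; otherwise I would split into three cases. If $\alpha$ is plus-decomposable, let $k$ be the smallest integer with $\alpha(\{1,\ldots,k\})=\{1,\ldots,k\}$; setting $\beta_1=\alpha|_{\{1,\ldots,k\}}$ and $\beta_2=\alpha|_{\{k+1,\ldots,n\}}$ yields $\alpha=\Id_2[\beta_1,\beta_2]$ with $\sigma=\Id_2$ simple, and the minimality of $k$ forces $\beta_1$ to be plus-indecomposable. The minus-decomposable case is entirely parallel, producing $\sigma=\wo[2]$ and $\beta_1$ minus-indecomposable. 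The remaining case is when $\alpha$ is neither plus- nor minus-decomposable but is not simple; here I would form the canonical partition of $\{1,\ldots,n\}$ into \emph{maximal proper blocks} (blocks of size strictly between $1$ and $n$ not contained in any larger such block), together with the singletons not covered by any of them, listed left to right as $I_1<I_2<\cdots<I_m$. Setting $\beta_k=\alpha|_{I_k}$ and defining $\sigma\in S_m$ by comparing the intervals $\alpha(I_k)$ produces the required decomposition.

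The combinatorial engine behind this last case is an \emph{overlap closure} observation: if $B_1,B_2$ are blocks of $\alpha$ whose intersection is a nonempty proper subset of both, then $B_1\cap B_2$ and $B_1\cup B_2$ are also blocks, since their images under $\alpha$ are intersections and unions of overlapping intervals, hence intervals. Applied to two overlapping maximal proper blocks, maximality forces $B_1\cup B_2=\{1,\ldots,n\}$, after which a short case check on the two possible positions of the intervals $\alpha(B_1),\alpha(B_2)$ inside $\{1,\ldots,n\}$ forces $\alpha$ to be either plus- or minus-decomposable, contradicting the case hypothesis. Hence maximal proper blocks are pairwise disjoint and the partition is well-defined. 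Simplicity of $\sigma$ then follows because any non-trivial block of $\sigma$ would produce a proper block of $\alpha$ which is a union of several $I_k$'s and strictly contains any maximal proper block among them (or assembles orphaned singletons into a new proper block whose ambient maximal proper block would have absorbed them), violating maximality. Together with the fact that $S_3$ has no simple permutations and that $m=2$ would revert to Case A or B, this forces $m\geq 4$.

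For uniqueness, given any decomposition $\alpha=\sigma[\beta_1,\ldots,\beta_m]$ with $\sigma$ simple and $m\geq 4$, the same overlap argument shows that the associated partition $I_1,\ldots,I_m$ must coincide with the canonical partition constructed above, forcing $\sigma$ and all $\beta_i$ to agree with the canonical choice. When $\sigma\in\{\Id_2,\wo[2]\}$ the partition is a single split at some index, and the hypothesis that $\beta_1$ is plus-indecomposable (respectively minus-indecomposable) pins this split down as the unique smallest one. The main obstacle lies in the overlap step within the ``neither-decomposable'' case: showing that two overlapping maximal proper blocks force $\alpha$ to be plus- or minus-decomposable is the most delicate part of the argument, and is precisely where the full force of the plus- and minus-indecomposability hypotheses is deployed.
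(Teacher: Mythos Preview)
The paper does not supply its own proof of this theorem: it is quoted verbatim from \cite{AAK} and marked with a terminal \qed, then used as a black box. So there is no proof in the paper to compare your proposal against.

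On its own merits, your argument follows the standard substitution-decomposition approach and is correct in outline. The overlap-closure lemma (if $B_1,B_2$ are blocks with $B_1\cap B_2\neq\emptyset$ then $B_1\cap B_2$ and $B_1\cup B_2$ are blocks) is indeed the right engine, and your deduction that overlapping maximal proper blocks force $\alpha$ to be plus- or minus-decomposable is valid: once $B_1\cup B_2=\{1,\ldots,n\}$ with (say) $B_1=\{1,\ldots,a\}$, the interval $\alpha(B_1)$ of length $a<n$ must contain $1$ or $n$ (otherwise $\alpha(B_2)$, of length $<n$, would have to contain both), and either case yields the required split. The argument that the quotient $\sigma$ is simple is also fine.

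The uniqueness paragraph is the thinnest part. Asserting that ``the same overlap argument'' forces any simple-$\sigma$ inflation (with $m\geq 4$) to use the canonical partition is correct but deserves one more sentence: given any such inflation with intervals $J_1,\ldots,J_m$, a proper block $B$ of $\alpha$ meeting $J_i,\ldots,J_j$ yields (via overlap closure with $J_i$ and $J_j$) the block $J_i\cup\cdots\cup J_j$, hence a block $\{i,\ldots,j\}$ of $\sigma$; simplicity of $\sigma$ with $m\geq 4$ then forces $i=j$, so every proper block of $\alpha$ lies inside a single $J_k$. This shows that each non-singleton $J_k$ is already a maximal proper block and each singleton $J_k$ lies in no proper block, so the two partitions coincide. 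With that made explicit, your proof is complete.
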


For our purposes, we modify the statement of the above theorem as follows.
\begin{theorem}\label{simple form theorem}
Let $n \geq 2$.  For every permutation $\alpha \in S_n$ there exists a permutation $\sigma \in S_m$ and
permutations $\beta_1,\beta_2,\dots,\beta_m$ such that $\alpha = \sigma[\beta_1,\beta_2,\dots,\beta_m]$
where either $\sigma$ is simple and $m \geq 4$ or $\sigma = \Id_m$ or $\sigma=\wo[m]$.  Furthermore this
expression for $\alpha$ is unique if we require that $m$ be maximal when $\sigma=\Id_m$ or $\sigma=\wo[m]$,
i.e., that each $\beta_b$ is plus-indecomposable when $\sigma=\Id$ and
each $\beta_b$ is minus-indecomposable when $\sigma=\wo$.   \qed
\end{theorem}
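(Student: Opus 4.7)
The plan is to derive this modified form from the Albert--Atkinson--Klazar (AAK) theorem stated just above, combined with the associativity of inflation under an $\Id$-top or a $\wo$-top. The key identity I use repeatedly is
\[
\Id_2\bigl[\gamma,\, \Id_k[\delta_1,\dots,\delta_k]\bigr] \;=\; \Id_{k+1}[\gamma, \delta_1,\dots,\delta_k],
\]
together with its obvious variants (with $\gamma$ on either side, and with $\wo$ in place of $\Id$). I also use the elementary observation that there are no simple permutations in $S_3$, so any simple $\sigma\in S_m$ with $m\geq 2$ satisfies $\sigma\in\{\Id_2,\wo[2]\}$ or $m\geq 4$.

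\emph{Existence.} I induct on $n$. Apply AAK to get $\alpha=\sigma[\beta_1,\dots,\beta_m]$ with $\sigma$ simple. If $m\geq 4$, the decomposition already has the desired shape. If $\sigma=\Id_2$, then AAK guarantees $\beta_1$ is plus-indecomposable, and the inductive hypothesis puts $\beta_2$ into modified form $\sigma_2[\gamma_1,\dots,\gamma_\ell]$. When $\sigma_2=\Id_\ell$ with plus-indecomposable $\gamma_i$, I absorb it into the outer $\Id_2$ via associativity to get $\alpha=\Id_{\ell+1}[\beta_1,\gamma_1,\dots,\gamma_\ell]$, with all factors plus-indecomposable. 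When instead $\sigma_2$ is simple of size $\geq 4$ or equals $\wo[\ell]$, I keep $\alpha=\Id_2[\beta_1,\beta_2]$; here $\beta_2$ is itself plus-indecomposable, since otherwise its AAK decomposition would have top $\Id_2$, contradicting the shape of $\sigma_2$. The case $\sigma=\wo[2]$ is dual.

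\emph{Uniqueness} is the main step. Suppose $\alpha$ admits two expressions in the claimed form. I first show that the type of the outer symbol is determined by $\alpha$. A permutation in the form $\Id_m[\beta_1,\dots,\beta_m]$ with $m\geq 2$ is plus-decomposable (rewrite it as $\Id_2[\beta_1,\Id_{m-1}[\beta_2,\dots,\beta_m]]$) but it cannot be minus-decomposable: otherwise $\alpha$ would admit two AAK decompositions with distinct simple tops $\Id_2$ and $\wo[2]$, contradicting AAK uniqueness. Dually for $\wo[m]$. And a permutation in the form $\sigma[\beta_1,\dots,\beta_m]$ with $\sigma$ simple of size $\geq 4$ is neither plus- nor minus-decomposable, again by AAK uniqueness applied to the simple top $\sigma\notin\{\Id_2,\wo[2]\}$. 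Hence the three cases are mutually exclusive and determined by $\alpha$.

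In the simple-$\sigma$ case, uniqueness of the $\beta_b$'s follows directly from AAK. For $\sigma=\Id_m$ I induct on $m$: rewrite $\alpha=\Id_2[\beta_1,\Id_{m-1}[\beta_2,\dots,\beta_m]]$; since $\beta_1$ is plus-indecomposable, AAK uniqueness pins down both $\beta_1$ and the tail $\Id_{m-1}[\beta_2,\dots,\beta_m]$, and the inductive hypothesis applied to the tail identifies the remaining $\beta_i$. The case $\sigma=\wo[m]$ is dual. The step requiring the most care, and the main technical obstacle, is showing that the outer shape is determined by $\alpha$ alone in the presence of the associativity rewrites; once that is in hand, the rest reduces cleanly to AAK plus induction.
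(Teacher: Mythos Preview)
Your proposal is correct and follows the natural route the paper intends: the statement is presented with a bare \qed immediately after citing the Albert--Atkinson--Klazar theorem, so the paper treats it as a direct reformulation of AAK, and your argument spells out exactly that derivation (associativity of $\Id$/$\wo$ inflations, the absence of simple permutations in $S_3$, and the trichotomy plus-decomposable/minus-decomposable/neither to pin down the outer type). One minor point worth patching: when you invoke the inductive hypothesis on $\beta_2$, you should note separately the case $\beta_2\in S_1$, where no induction is needed since $\Id_1$ is trivially plus-indecomposable.
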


\begin{corollary} \label{new corollary}
In the notation of Theorem~\ref{simple form theorem} above $\sigma = I_m$, $\sigma =J_m$ or $\sigma$ is simple and $m \geq 4$
if and only if $\alpha$ is plus-decomposable, $\alpha$ is minus-decomposable or $\alpha$ is both plus-indecomposable and minus-indecomposable
respectively.
\end{corollary}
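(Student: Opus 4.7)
My plan is to derive the corollary as a direct consequence of the uniqueness clause in Theorem~\ref{simple form theorem}. The key observation is that the three cases appearing in that theorem ($\sigma=\Id_m$, $\sigma=\wo[m]$, and $\sigma$ simple with $m\geq 4$) partition $S_n$ for $n\geq 2$, as do the three classes on the right-hand side of the corollary (plus-decomposable, minus-decomposable, and both plus- and minus-indecomposable; these are disjoint by the remark following the plus/minus-decomposable definition). Consequently, if I can prove each of the three forward implications, the converses follow automatically by matching classes.

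The first step is to record the regrouping identity for inflations: for $m\geq 2$,
\[
\Id_m[\beta_1,\ldots,\beta_m] = \Id_2\bigl[\beta_1,\Id_{m-1}[\beta_2,\ldots,\beta_m]\bigr],
\]
and the analogous identity with $\wo$ in place of $\Id$. This follows immediately from the formal description of inflation in \S\ref{section inflation}. With this identity in hand, the first two forward implications are automatic: if the canonical form of $\alpha$ has $\sigma=\Id_m$, then $m\geq 2$ and the identity writes $\alpha=\Id_2[\beta_1,\gamma]$ where $\gamma=\Id_{m-1}[\beta_2,\ldots,\beta_m]$, so $\alpha$ is plus-decomposable; the $\wo[m]$ case is symmetric.

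The third forward implication I would prove by contradiction. Suppose $\sigma$ is simple with $m\geq 4$ and, against the conclusion, that $\alpha$ is plus-decomposable, say $\alpha=\Id_2[\gamma_1,\gamma_2]$. Whenever some $\gamma_i$ is itself plus-decomposable, I absorb its inner $\Id_2$-inflation into the outer inflation using the reverse of the regrouping identity, strictly increasing the number of components each time. After finitely many steps I arrive at an expression $\alpha=\Id_k[\epsilon_1,\ldots,\epsilon_k]$ with $k\geq 2$ and every $\epsilon_i$ plus-indecomposable. This is exactly the canonical maximal form of Theorem~\ref{simple form theorem} with outer permutation $\Id_k$, so by uniqueness $\sigma=\Id_k$, which contradicts the hypothesis that $\sigma$ is simple with $m\geq 4$. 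Ruling out minus-decomposability is entirely symmetric, with $\wo[k]$ replacing $\Id_k$ throughout. The main (and only) technical point is the regrouping identity; everything else is a routine bookkeeping argument driven by uniqueness.
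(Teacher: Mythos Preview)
Your proposal is correct. The paper states the corollary without proof, treating it as immediate from Theorem~\ref{simple form theorem} and the definitions; your argument simply spells out the natural details (the partition observation plus the regrouping identity for inflations), so there is no meaningful difference in approach.
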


\begin{definition}
We say that $\alpha$ is expressed in {\it simple form} when we write $\alpha = \sigma[\beta_1,\beta_2,\dots,\beta_m]$
in the form guaranteed by Theorem~\ref{simple form theorem}, i.e, when $\sigma$ is simple with $m \geq 4$
or $\sigma=\wo[m]$ or $\sigma = \Id_m$ with $m$ maximal.
\end{definition}

\mysubsection{} \label{subsection with main result}
We are now ready to state our main result. The inflation procedure described above allows us to  inductively construct
decompositions of $\Delta_n^+$ into inversion sets. Assume that the set $\{1, 2, \ldots, n\}$ is
partitioned into $m$ intervals of lengths $z_1, z_2, \ldots, z_m$ and let $\sigma_a \in S_m$, and
$\beta_{ab} \in S_{z_b}$ ($1 \leq a \leq r$, $ 1 \leq b \leq m$)
be such that
$$\begin{array}{rcl}
\Delta_m^+ &=& \Phi(\sigma_1) \sqcup \Phi(\sigma_2) \sqcup \ldots \sqcup \Phi(\sigma_r),  \\
&&\\
\Delta^+_{z_1} & = & \Inv{\beta_{11}} \sqcup \Inv{\beta_{21}} \sqcup \dots \sqcup \Inv{\beta_{r1}}, \\
\Delta^+_{z_2} & = & \Inv{\beta_{12}} \sqcup \Inv{\beta_{22}} \sqcup \dots \sqcup \Inv{\beta_{r2}},\\
&\vdots&\\
\Delta^+_{z_m} & = & \Inv{\beta_{1m}} \sqcup \Inv{\beta_{2m}} \sqcup \dots \sqcup \Inv{\beta_{rm}}.
\end{array}
$$
Define $\alpha_1, \alpha_2, \ldots \alpha_r \in S_n$ by
\begin{eqnarray*}
\alpha_1  &= & \sigma_1[\beta_{11},\beta_{12},\dots,\beta_{1m}],\\
\alpha_2  &= & \sigma_2[\beta_{21},\beta_{22},\dots,\beta_{2m}],\\
& \vdots &\\
\alpha_r  &= & \sigma_r[\beta_{r1},\beta_{r2},\dots,\beta_{rm}].
\end{eqnarray*}
It then follows that
$$
\Delta_n^+ = \Inv{\alpha_1} \sqcup \Inv{\alpha_2} \sqcup \dots \sqcup \Inv{\alpha_r}.
$$

Our main result below is that every decomposition of $\Delta_n^+$ into inversion sets can be constructed in such a way.
Moreover, we    identify a canonical way of  representing $\alpha_1, \alpha_2, \ldots, \alpha_r$ as inflations
which identifies the decomposition
$\Delta_n^+ = \Inv{\alpha_1} \sqcup \Inv{\alpha_2} \sqcup \dots \sqcup \Inv{\alpha_r}$ uniquely.

\begin{theorem}\label{main theorem}
Suppose $\alpha_1, \alpha_2,\dots,\alpha_r \in S_n$ and
$$
\Delta_n^+ = \Inv{\alpha_1} \sqcup \Inv{\alpha_2} \sqcup \dots \sqcup \Inv{\alpha_r}
$$
with all $\Inv{\alpha_a} \neq \emptyset$.
  Without loss of generality assume that the highest root $(1,n) \in \Inv{\alpha_1}$.
Let $\alpha_1 = \sigma_1[\beta_{11},\beta_{12},\dots,\beta_{1m}]$ be the simple form expression of $\alpha_1$
with a corresponding partition of the set $\{1,2, \ldots, n\}$ into $m$ intervals of lengths $z_1, z_2, \ldots, z_m$.
Then, up to reordering of $\alpha_2, \alpha_3, \ldots, \alpha_r$,
there exists  a unique set of  elements $\sigma_a\in S_m$ and $\beta_{ab}\in S_{z_b}$
such that $\alpha_a = \sigma_a[\beta_{a1},\beta_{a2},\dots,\beta_{am}]$, for $a=2,\dots,r$, $b=1,2,\dots,m$ and
\begin{itemize}
\item[(i)] $$\begin{array}{rcl}
\Delta_m^+ &=& \Phi(\sigma_1) \sqcup \Phi(\sigma_2) \sqcup \ldots \sqcup \Phi(\sigma_r),  \\
&&\\
\Delta^+_{z_1} & = & \Inv{\beta_{11}} \sqcup \Inv{\beta_{21}} \sqcup \dots \sqcup \Inv{\beta_{r1}}, \\
\Delta^+_{z_2} & = & \Inv{\beta_{12}} \sqcup \Inv{\beta_{22}} \sqcup \dots \sqcup \Inv{\beta_{r2}},\\
&\vdots&\\
\Delta^+_{z_m} & = & \Inv{\beta_{1m}} \sqcup \Inv{\beta_{2m}} \sqcup \dots \sqcup \Inv{\beta_{rm}};
\end{array}
$$
\item[(ii)]   if $\alpha_1$ is minus-decomposable then $\sigma_1 = \wo[m]$
and $\sigma_2=\sigma_3=\dots=\sigma_r=\Id_m$;
\item[(iii)]  if $\alpha_1$ is minus-indecomposable then $\sigma_1$ is simple and $\sigma_2 = \wo \sigma_1$,
and $\sigma_3=\sigma_4=\dots=\sigma_r=\Id_m$.
\end{itemize}
In particular, $\sigma_1$ and at most one other of the $\sigma_a$ are not the identity.\\
Let $q$ denote the number of $\sigma_a$ which are not $\Id_m$, i.e.,
$q := \begin{cases} 1, &\text{if }\alpha_1 \text{ is minus-decomposable};\\
2, &\text{if }\alpha_1 \text{ is minus-indecomposable}.
\end{cases}$

Then, the above decomposition of $\Delta_n^+$ is irreducible if and only if the following four conditions hold
\begin{itemize}
\item[(i)] each of the decompositions
$\Delta^+_{z_b}  =  \Inv{\beta_{1b}} \sqcup \Inv{\beta_{2b}} \sqcup \dots \sqcup \Inv{\beta_{rb}}$
is irreducible;
\item[(ii)] exactly one of of $\beta_{a1},\beta_{a2},\dots,\beta_{am}$ is not equal to the identity for $a=q+1,\dots, r$;
\item[(iii)]  $\beta_{ab} = \Id_{z_b}$ for $a=1,\dots, q$ and $b=1,\dots, m$;
\item[(iv)]  $m = 2$ if $\alpha_1$ is minus-decomposable.
\end{itemize}
\end{theorem}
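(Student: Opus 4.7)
The plan is to prove the structural description (items (i)--(iii) of the first half) first, and then derive the irreducibility criterion (items (i)--(iv) of the second half) from it. Writing $\alpha_1 = \sigma_1[\beta_{11},\ldots,\beta_{1m}]$ in its simple form produces intervals $I_1,\ldots,I_m$ of sizes $z_1,\ldots,z_m$. Since $1 \in I_1$ and $n \in I_m$, the hypothesis $(1,n) \in \Phi(\alpha_1)$ forces $(1,m) \in \Phi(\sigma_1)$, so $\sigma_1 \neq \Id_m$; by Corollary~\ref{new corollary} this leaves two cases, $\sigma_1 = \wo[m]$ (the minus-decomposable case) or $\sigma_1$ simple with $m \geq 4$ (the minus-indecomposable case).

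The central step is to show that the intervals $I_1,\ldots,I_m$ are blocks of every $\alpha_c$, so each $\alpha_c$ is an inflation $\sigma_c[\beta_{c1},\ldots,\beta_{cm}]$. When $\sigma_1 = \wo[m]$ this is immediate: $\Phi(\sigma_1) = \Delta_m^+$ implies $\Phi(\alpha_1)$ contains every inter-interval root, so each $\alpha_c$ for $c \geq 2$ has no inter-interval inversions, which forces $\alpha_c(I_b) = I_b$ and hence $\sigma_c = \Id_m$. The harder case is $\sigma_1$ simple, where for each $(a,b) \notin \Phi(\sigma_1)$ the whole rectangle $I_a \times I_b$ lies in $\bigsqcup_{c \geq 2} \Phi(\alpha_c)$; the claim is that each $\Phi(\alpha_c)$ intersects $I_a \times I_b$ in all or nothing. \emph{This is the main obstacle.} My plan is to argue by contradiction: if some $\Phi(\alpha_c)$ cuts $I_a \times I_b$ non-trivially, exhibit witnesses $s \in I_a$ and $p,q \in I_b$ with $(s,p) \in \Phi(\alpha_c)$ but $(s,q),(p,q) \notin \Phi(\alpha_c)$, and then use the closure and co-closure properties of inversion sets to propagate the inconsistency across rectangles, eventually producing a non-trivial subset of $\{1,\ldots,m\}$ that would be forced to be a block of $\sigma_1$ --- contradicting simplicity.

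Once the common block structure is established, disjointness of the $\Phi(\alpha_c)$ splits cleanly into the decompositions of item (i): the inter-interval inversions of $\alpha_c$ give $\Delta_m^+ = \bigsqcup_c \Phi(\sigma_c)$, and the intra-$I_b$ inversions give $\Delta^+_{z_b} = \bigsqcup_c \Phi(\beta_{cb})$. For the specific form of the $\sigma_c$: in the minus-decomposable case $\Phi(\sigma_1) = \Delta_m^+$, so all other $\sigma_c = \Id_m$. In the simple case one uses the standard identity $\Delta_m^+ \setminus \Phi(\sigma_1) = \Phi(\wo[m]\sigma_1)$, together with the observation that $\wo[m]\sigma_1$ is simple whenever $\sigma_1$ is (multiplication by $\wo[m]$ preserves block structure); being simple, $\wo[m]\sigma_1$ is irreducible, so it must appear as a single $\sigma_c$, say $\sigma_2$, while the remaining $\sigma_c$ for $c \geq 3$ are forced to be $\Id_m$. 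Uniqueness of the $\sigma_a$ and $\beta_{ab}$ follows from uniqueness of the simple-form inflation applied to each $\alpha_c$.

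For the irreducibility criterion, each of (i)--(iv) is necessary by exhibiting an explicit refinement whenever it fails, and sufficient by case analysis. Failure of (i) lifts a within-block refinement to a global one. Failure of (ii) yields some $\alpha_a$ with $\sigma_a = \Id_m$ and at least two non-identity $\beta_{ab}$; splitting off each non-identity block separately refines $\Phi(\alpha_a)$. Failure of (iii) yields some $\alpha_a$ with $\sigma_a \neq \Id_m$ and a non-identity $\beta_{ab}$, and then the identity $\Phi(\alpha_a) = \Phi(\sigma_a[\Id_{z_1},\ldots,\Id_{z_m}]) \sqcup \Phi(\Id_m[\beta_{a1},\ldots,\beta_{am}])$ is an explicit refinement. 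Failure of (iv) gives $\alpha_1 = \wo[m][\Id_{z_1},\ldots,\Id_{z_m}]$ with $m > 2$, so any non-trivial decomposition of $\Delta_m^+$ --- which exists for $m > 2$ --- lifts through the block structure to refine $\Phi(\alpha_1)$. Conversely, when (i)--(iv) all hold, an exhaustive check shows no refinement of any $\alpha_c$ is possible.
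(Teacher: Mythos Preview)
Your overall architecture is right, and the minus-decomposable case and the irreducibility criterion are handled essentially as in the paper. But there are two real problems in the minus-indecomposable case.

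\textbf{The main gap.} You write ``being simple, $\wo[m]\sigma_1$ is irreducible'' as if this were a routine observation. It is not: this is exactly Proposition~\ref{simple implies irreducible} (via Corollary~\ref{main cor}), and its proof is the technical heart of the section. The paper establishes it by induction using the Schmerl--Trotter theorem (that every non-exceptional simple permutation has a simple one-point deletion), together with a separate verification for the four families of exceptional permutations and the lemma (Lemma~\ref{one-point-deletion-irreducible-means-not-simple}) that a reducible element with an irreducible one-point deletion cannot be simple. None of this is automatic, and your plan uses the conclusion without supplying any argument.

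\textbf{The ``propagation'' step is not a proof.} Your route to the common block structure is to show that each rectangle $I_a\times I_b$ with $(a,b)\notin\Phi(\sigma_1)$ is all-or-nothing in every $\Phi(\alpha_c)$, by assuming a cut and ``propagating'' via closedness/co-closedness until a nontrivial block of $\sigma_1$ appears. You never say what quantity is being propagated or why the process terminates at a block of $\sigma_1$; the witnesses you produce (for instance $(p,q)\notin\Phi(\alpha_c)$ from $(s,p)\in\Phi(\alpha_c)$, $(s,q)\notin\Phi(\alpha_c)$) constrain intra-$I_b$ roots of $\alpha_c$, not blocks of $\sigma_1$, and I do not see a mechanism that converts local cuts of one rectangle into an interval $J\subset\{1,\ldots,m\}$ with $\sigma_1(J)$ an interval. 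The paper avoids this entirely: it restricts to \emph{admissible} sets $\mathcal F$ (one element from each $I_b$), so that $\theta_{\mathcal F}$ of the decomposition gives $\Phi(\wo[m]\sigma_1)=\bigsqcup_{c\geq 2}\Phi(\theta_{\mathcal F}(\alpha_c))$; irreducibility of $\wo[m]\sigma_1$ then forces all of this into a single index $\delta(\mathcal F)$, and Lemma~\ref{vee lemma} (every simple $\sigma$ with $m\geq 4$ has an inversion avoiding any prescribed index) lets one show $\delta(\mathcal F)$ is constant as $\mathcal F$ varies. This \emph{simultaneously} yields the block structure for $\alpha_2$ and that $\sigma_2=\wo[m]\sigma_1$. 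Since you need irreducibility of $\wo[m]\sigma_1$ anyway in your step (B), the separate rectangle-by-rectangle argument in (A) is both unjustified and unnecessary; the admissible-set restriction is the cleaner path.

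A minor point: the expressions $\alpha_c=\sigma_c[\beta_{c1},\ldots,\beta_{cm}]$ for $c\geq 2$ are generally \emph{not} the simple forms of $\alpha_c$, so uniqueness does not come from uniqueness of simple form. It comes from the fact that, once the interval partition $I_1,\ldots,I_m$ is fixed, one has $\sigma_c=\theta_{\mathcal F}(\alpha_c)$ for any admissible $\mathcal F$ and $\beta_{cb}=\theta_{I_b}(\alpha_c)$.
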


\begin{example}\label{example}
Let $n=8$ and let $\alpha_1 = (4,5,6,1,7,8,3,2)$, $\alpha_2 = (5,3,4,8,1,2,6,7)$,  $\alpha_3 = (1,3,2,4,6,5,7,8)$.
Here $q=2$ since $\alpha_1$ is minus-indecomposable.
Then  $\Delta_8^+ = \Inv{\alpha_1} \sqcup \Inv{\alpha_2} \sqcup \Inv{\alpha_3}$,
$m=4$, and
$$
\begin{array}{rcl}
\alpha_1 &=& (3,1,4,2)[(1,2,3),(1),(1,2),(2,1)]\\
\alpha_2 &=& (2,4,1,3)[(3,1,2),(1),(1,2),(1,2)]\\
\alpha_3 &=& (1,2,3,4)[(1,3,2),(1),(2,1),(1,2)].
\end{array}
$$
Note that $\beta_{11} = I_3$, $\beta_{13} = \beta_{23} = I_2$ and $\beta_{24} = \beta_{34} = I_2$. Consequently,
$\Delta_3^+ = \Inv{\beta_{11}} \sqcup \Inv{\beta_{21}} \sqcup \Inv{\beta_{31}} =
\Inv{\beta_{21}} \sqcup \Inv{\beta_{31}}$,
$\Delta_2^+ =  \Inv{\beta_{13}} \sqcup \Inv{\beta_{23}} \sqcup \Inv{\beta_{33}} = \Inv{\beta_{33}}$
and
$\Delta_2^+ =  \Inv{\beta_{14}} \sqcup \Inv{\beta_{24}} \sqcup \Inv{\beta_{34}} = \Inv{\beta_{14}}$.
This decomposition is not irreducible: it fails condition (ii) for irreducibility since $\beta_{31} \neq I_3$ and $\beta_{33} \neq I_2$;
it also fails condition (iii) since $\beta_{14} \neq I_2$ and $\beta_{21} \neq I_3$.
\end{example}

The recursive form of this theorem allows us to inductively solve many problems concerning decompositions.
For example, in \S\ref{enumerative section} we exploit this recursiveness to obtain a number of results
enumerating various solutions to the main problem.
In \S\ref{Motivation} we use the form to prove a result about the decompositions which yields an algorithm
producing all generating rays on a given regular codimension $n$ face of the Littlewood-Richardson cone.

\mysubsection{}
The problem discussed above has a Lie theoretic background and a natural generalization.
We recommend the book by Fulton and Harris,~\cite{FH} as a general Lie Theory reference.
Let $\Delta$ be a root
system with corresponding Weyl group $\W$. Fix a splitting $\Delta = \Delta^+ \sqcup \Delta^-$ of $\Delta$ into positive and negative roots.
For $\alpha \in \W$, the inversion set of $\alpha$, $\Inv{\alpha}$ is defined by
$\Inv{\alpha} := \{ v \in \Delta^+ \mid \alpha\cdot v \in \Delta^-\}$.  We are concerned with ways to express the positive roots
as a disjoint union of inversion sets: $\Delta^+ =   \Inv{\alpha_1} \sqcup \Inv{\alpha_2} \sqcup \dots \sqcup \Inv{\alpha_r}$
where $\alpha_1,\alpha_2,\dots,\alpha_r \in \W$.

Problem~\ref{main problem}, which is solved by Theorem~\ref{main theorem}, is
the $A_{n-1}$-case of the more general problem for arbitrary root systems.
(Both of the motivating problems also have versions for arbitrary root systems, and their solution is again
in terms of decompositions of the positive roots into inversion sets, i.e., the more general problem.)
It is natural to attempt to solve the general problem for all root systems. Section \S\ref{section BCD}
is devoted to studying the root systems of types $B, C$ and $D$.
We provide a solution for root systems
of types  $B$ and $C$. Root systems of types $B$ and $C$ have isomorphic Weyl groups and so yield
identical answers to our questions; nevertheless we consider them separately because this
gives us two different ways of looking at the same problem.
Root systems of type $D$ are more complicated and we only provide a brief discussion of the difficulties we encountered when
attempting to deal with them.
 The exceptional root systems are also interesting but our methods are unlikely to yield any results.
The solution of Problem~\ref{main problem} for root systems of type $G_2$ is elementary:  all nontrivial
decompositions are of the form $\Inv{\alpha}\sqcup\Inv{J\alpha}$, where $J\in \W(G_2)$ is the longest element
of $\W(G_2)$.  The root system $F_4$ is probably easily treated by direct computations
(possibly aided by a computer).  Root systems of type $E$, especially $E_8$, may be too complicated to
treat even by computer computations.

In \S\ref{single section} we use Theorem~\ref{main theorem} to give a solution, in the form of an algorithm, to
the following natural variation of Problem~\ref{main problem}:
Given $\alpha \in S_n$, describe all decompositions
$$
\Inv{\alpha}  = \Inv{\alpha_1} \sqcup \Inv{\alpha_2}.
$$

\mysubsection{} {\bf Organization of the paper.} In \S\ref{Preliminaries} we establish some basic results on
inversion sets. In \S\ref{section inflation} we define restriction maps and use them to establish further
results on simple and irreducible permutations and to prove the main theorem.  Section \S\ref{section symmetric}
discusses symmetric permutations. The results about symmetric permutations are then used in \S\ref{section BCD} to extend the
main theorem to root systems of  types $B$ and $C$.
We then turn to applications of the main theorem.
In \S\ref{enumerative section} we give some enumerative results
deduced from the theorems.
In \S\ref{single section} we give an algorithm to decompose a single inversion set.
In \S\ref{Motivation} we use the main theorem to parameterize regular codimension
$n$ faces of the Littlewood-Richardson cone.  Finally in \S A (an appendix) we describe {\em sign diagrams},
a visual method of displaying inversion sets which has proved useful to us in thinking about these problems.

\section{Preliminaries on inversion sets}\label{Preliminaries}
It is easy to see that an inversion set $\Phi$ must satisfy the following two conditions:
\begin{itemize}
\item [(i)] If $(i,j), (j,k) \in \Phi$ then $(i,k) \in \Phi$. ({\it closed condition})
\item [(ii)] If $(i,j), (j,k) \notin \Phi$ then $(i,k) \notin \Phi$.  ({\it co-closed condition})
\end{itemize}
Kostant \cite{K} proved the following statement characterizing inversion sets.

\begin{proposition}[{\cite[Proposition~5.10]{K}}] \label{proposition 11}
A set $\Phi \subset \Delta_n^{+}$ is an inversion set if and only if $\Phi$ is both closed and co-closed, i.e.,
both $\Phi$ and its complement $\Delta^{+}_n\setminus \Phi$ satisfy the closed condition.
\end{proposition}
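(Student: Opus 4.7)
The plan is to handle the two directions separately. The forward direction is immediate: if $\Phi = \Phi(\alpha)$ and $i < j < k$, then $(i,j),(j,k)\in\Phi$ means $\alpha(i) > \alpha(j) > \alpha(k)$, forcing $(i,k)\in\Phi$; and $(i,j),(j,k)\notin\Phi$ means $\alpha(i) < \alpha(j) < \alpha(k)$, forcing $(i,k)\notin\Phi$. So an inversion set is always closed and co-closed.

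For the converse I would reconstruct $\alpha$ from $\Phi$ by defining a total order on $\{1,2,\dots,n\}$ that encodes the desired values of $\alpha$. Specifically, for $i \neq j$ declare
\[
i \prec j \quad\Longleftrightarrow\quad \bigl(i<j \text{ and } (i,j)\notin\Phi\bigr) \ \text{ or }\ \bigl(j<i \text{ and } (j,i)\in\Phi\bigr),
\]
the idea being that $i \prec j$ should mean ``$\alpha(i) < \alpha(j)$''. Trichotomy is built into the definition, so the substantive step is to prove transitivity. Once $\prec$ is known to be a total order, define $\alpha(i)$ to be the rank of $i$ in $\prec$; a direct check, using the very definition of $\prec$, gives $\Phi(\alpha)=\Phi$.

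To prove transitivity, assume $i \prec j$ and $j \prec k$ and split into the six cases determined by the relative integer order of $i,j,k$. In the two cases where the middle element of the $\prec$-chain is also the middle integer (namely $i<j<k$ and $k<j<i$), the hypotheses translate directly into a pair of statements of the form ``both pairs lie in $\Phi$'' (or both lie outside $\Phi$), so closed (resp.\ co-closed) applied to $i,j,k$ immediately yields $i \prec k$. The remaining four cases are handled by contradiction: the $\prec$-hypotheses produce one ``inside $\Phi$'' and one ``outside $\Phi$'' statement, and assuming the desired conclusion fails produces a configuration to which closed or co-closed can be applied to contradict one of the hypotheses. In each case exactly one of the two Kostant conditions is needed, and which one depends only on whether the contradictory assumption places an extra pair in $\Phi$ or out of it.

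The main obstacle is the six-case check for transitivity: individually the cases are short, but one must be careful to invoke the correct condition (closed vs.\ co-closed) with the correct triple of indices, since the roles of $i,j,k$ in $\Phi$ and in $\prec$ are not the same. After this is done, verifying $\Phi(\alpha)=\Phi$ reduces to unpacking the definitions: for $i<j$, the pair $(i,j)$ lies in $\Phi(\alpha)$ iff $\alpha(i) > \alpha(j)$ iff $j \prec i$, which by the definition of $\prec$ (in the case $i<j$) is exactly the condition $(i,j)\in\Phi$.
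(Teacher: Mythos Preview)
Your proof is correct. The forward direction is immediate as you say, and your construction of the total order $\prec$ for the converse is the standard elementary argument; the six-case transitivity check goes through exactly as you outline (cases $i<j<k$ and $k<j<i$ use co-closed and closed directly, the other four by contradiction), and the final verification $\Phi(\alpha)=\Phi$ is a definition chase.

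Note, however, that the paper does not give its own proof of this proposition: it is stated with attribution to Kostant \cite[Proposition~5.10]{K} and used as a known result. So there is no ``paper's proof'' to compare against. Your argument is the natural self-contained proof one would supply in type $A$; Kostant's original statement is for arbitrary root systems, where the argument must be phrased in terms of reduced expressions or a length induction rather than an explicit total order, but in type $A$ your approach is both simpler and more transparent.
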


The following simple result is often useful.
\begin{lemma}
\label{simple root lemma}
Every non-empty inversion set $\Inv{\alpha}$ contains at least one simple root.
\end{lemma}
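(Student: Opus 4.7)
The plan is to argue by contradiction using the co-closed condition from Proposition~\ref{proposition 11}. Suppose $\Phi(\alpha)$ is non-empty but contains no simple root; I will derive a contradiction by exhibiting a pair in $\Phi(\alpha)$ that cannot actually lie there.

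First, I would choose a pair $(i,j) \in \Phi(\alpha)$ for which the gap $j-i$ is as small as possible. Since no simple root lies in $\Phi(\alpha)$ by assumption, we must have $j > i+1$, and so $j - i \geq 2$.

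Next, I would consider the intermediate index $i+1$. The two pairs $(i,i+1)$ and $(i+1,j)$ each have gap strictly smaller than $j-i$, so by the minimality of our choice, neither can belong to $\Phi(\alpha)$. The co-closed condition from Proposition~\ref{proposition 11} then forces $(i,j) \notin \Phi(\alpha)$, contradicting the choice of $(i,j)$. This contradiction shows that some pair of gap $1$ must have been present in $\Phi(\alpha)$ all along, proving the lemma.

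There is an even more direct argument — simply observe that $\Phi(\alpha) \neq \emptyset$ forces $\alpha \neq \mathrm{Id}_n$, so the sequence $\alpha(1),\alpha(2),\dots,\alpha(n)$ is not strictly increasing and must have a descent $\alpha(i) > \alpha(i+1)$, which gives $(i,i+1) \in \Phi(\alpha)$. I would likely use this one-line proof in the paper, but the minimal-gap argument is worth noting because it fits naturally with the closed/co-closed framework just introduced and provides a template I would expect to reuse when proving analogous results for inversion sets in types $B$, $C$, and $D$, where a concrete descent-style description is less immediate. There is no real obstacle here; the only thing to be careful about is that both sub-pairs $(i,i+1)$ and $(i+1,j)$ genuinely have smaller gap, which is automatic once $j - i \geq 2$.
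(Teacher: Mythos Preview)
Your proposal is correct, and your preferred one-line descent argument is exactly the proof the paper gives: if $\alpha(i)<\alpha(i+1)$ for all $i$ then $\alpha=\Id_n$, so a non-empty inversion set forces a descent and hence a simple root. Your minimal-gap argument via the co-closed condition is also valid but is not the route the paper takes; it is a pleasant alternative that, as you note, generalizes more readily to other root systems where ``descent'' is less immediate.
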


\begin{proof}
This
 follows from the fact that if $\alpha(i) < \alpha(i+1)$ for all $i=1,2,\dots,n-1$ then $\alpha=\Id_n$.
\end{proof}

\begin{definition}
The graph of a permutation $\alpha\in S_n$ is the set of $n$ lattice points
$\{(i,\alpha(i))\mid i=1,2,\dots,n\}$ considered as a subset of $[1,n]\times [1,n] \subset \R^2$.
\end{definition}

We have already noted that  $\Inv{\wo} = \Delta^+$.
The following two lemmas give further indication of the importance of $\wo$.

\begin{lemma}\label{complements are simple}
Let $\alpha \in S_n$.  The permutation $\alpha$ is simple if and only if $\wo \alpha$ is simple.
\end{lemma}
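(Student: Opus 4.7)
The plan is to show something slightly stronger: the set of blocks of $\alpha$ (regarded as subintervals of the domain $\{1,2,\dots,n\}$) is equal to the set of blocks of $\wo\alpha$. Since simplicity is defined as the absence of non-trivial blocks, this equality immediately implies the lemma.

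The key observation is that $\wo$ acts on values by reversal: $\wo(k)=n+1-k$, so $(\wo\alpha)(i)=n+1-\alpha(i)$. Thus for any subset $S\subseteq\{1,\dots,n\}$, the image $(\wo\alpha)(S)$ is obtained from $\alpha(S)$ by applying the involution $k\mapsto n+1-k$ of $\{1,\dots,n\}$. This involution is an order-reversing bijection, hence carries intervals of size $t$ bijectively to intervals of size $t$.

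Now let $I=\{i,i+1,\dots,i+t-1\}$ be any interval in the domain. By the definition of a block, $I$ is a block of $\alpha$ iff $\alpha(I)$ is an interval of size $t$; by the observation above, this happens iff $(\wo\alpha)(I)$ is an interval of size $t$, i.e.\ iff $I$ is a block of $\wo\alpha$. Taking the contrapositive for non-trivial sizes $1<t<n$ yields that $\alpha$ has no non-trivial block iff $\wo\alpha$ has no non-trivial block, which is the claimed equivalence.

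There is essentially no technical obstacle here; the whole content is the elementary fact that post-composition with $\wo$ only changes the \emph{labels} of values via an order-reversing bijection and therefore preserves the property of a set of values being an interval. The same reasoning will be useful later when comparing $\Phi(\alpha)$ with $\Phi(\wo\alpha)$, so it is worth stating cleanly.
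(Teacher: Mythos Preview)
Your proof is correct and is essentially the same argument as the paper's, just phrased set-theoretically rather than geometrically: the paper observes that the graph of $\wo\alpha$ is the reflection of the graph of $\alpha$ in the horizontal line $y=n/2$, which is exactly your observation that $(\wo\alpha)(i)=n+1-\alpha(i)$, and both conclude that this symmetry preserves the block structure.
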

\begin{proof}
A block of size $t+1$ for the permutation $\alpha$ corresponds to a $t \times t$ closed square in $[1,n]\times[1,n]$
which contains $t+1$ points of the  graph of $\alpha$.  Hence $\alpha$ is simple if there does not
exist a $t\times t$ closed square in $[1,n]\times[1,n]$ containing $t+1$ points of the graph of $\alpha$ with $2\leq t \leq n-1$.
If the graph of $\alpha$ satisfies this condition then so does the graph of $J\alpha$, which
is obtained from that of $\alpha$ by reflecting in the horizontal line $y=n/2$.  Thus $\alpha$ is simple
if and only if $J\alpha$ is simple.
\end{proof}

To each element $(i,j)\in \Delta_n^{+}$ we associate the line segment joining the points $(i,\alpha(i))$
and $(j,\alpha(j))$ on the graph of $\alpha$.  We note that $(i,j)\in\Inv\alpha$ if and only if the
corresponding line segment has negative slope.

\begin{lemma}\label{complement lemma}
Let $\alpha\in S_n$.  Then $\Delta_n^+ = \Inv{\alpha} \sqcup \Inv{\wo \alpha}$, or equivalently,
$\Inv{\wo\alpha}=\Delta_n^{+}\setminus\Inv\alpha$.
\end{lemma}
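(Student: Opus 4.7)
The plan is to verify this by a direct calculation on values, using the explicit formula $J(k) = n+1-k$. First I would compute $(J\alpha)(i) = n+1 - \alpha(i)$ for each $i$. Then, for any $(i,j) \in \Delta_n^+$ (so $i < j$), note that
\[
(i,j) \in \Phi(J\alpha) \iff (J\alpha)(i) > (J\alpha)(j) \iff n+1-\alpha(i) > n+1-\alpha(j) \iff \alpha(i) < \alpha(j).
\]
Since $\alpha$ is a bijection and $i \neq j$, exactly one of $\alpha(i) > \alpha(j)$ or $\alpha(i) < \alpha(j)$ holds. The first condition says $(i,j) \in \Phi(\alpha)$ and the second says $(i,j) \in \Phi(J\alpha)$, so every element of $\Delta_n^+$ lies in exactly one of $\Phi(\alpha)$ or $\Phi(J\alpha)$. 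This gives both the disjointness and the union equal to $\Delta_n^+$.

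There is really no obstacle here; the argument is a one-line verification at the level of values. A second, equivalent viewpoint one could record for intuition (and which aligns nicely with the proof of Lemma~\ref{complements are simple}) is geometric: the graph of $J\alpha$ is the reflection of the graph of $\alpha$ across the horizontal line $y = (n+1)/2$. This reflection negates the slope of every line segment joining two points of the graph, so the segment associated to $(i,j)$ has negative slope for $\alpha$ iff it has positive slope for $J\alpha$. By the remark just before the lemma, negative slope characterizes membership in the inversion set, which again yields the partition $\Delta_n^+ = \Phi(\alpha) \sqcup \Phi(J\alpha)$.
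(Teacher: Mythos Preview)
Your proposal is correct. The paper's proof is precisely your second, geometric viewpoint (reflection of the graph across the horizontal midline reverses the sign of every slope), so your direct calculation on values is just the coordinate unpacking of the same argument; there is no meaningful difference between the two approaches.
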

\begin{proof}
The graph of $\wo\alpha$ is obtained from the graph of $\alpha$ by reflecting in the horizontal line
$y=n/2$.  Using the characterization of $\Inv\alpha$ as those positive roots whose corresponding
line segment has negative slope completes the proof of the lemma.
\end{proof}

\begin{corollary}\label{when is Jm reducible}
The element $\wo[m]$ is reducible for $m\geq 3$ and irreducible for $m=2$.
\end{corollary}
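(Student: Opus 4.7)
The plan is to derive both statements directly from Lemma~\ref{complement lemma}, which says $\Delta_m^+ = \Phi(\alpha) \sqcup \Phi(\wo[m]\alpha)$ for any $\alpha \in S_m$.

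First I would dispose of $m=2$. Since $S_2 = \{\Id_2, \wo[2]\}$, any decomposition $\Phi(\wo[2]) = \Delta_2^+ = \Phi(\alpha_1) \sqcup \cdots \sqcup \Phi(\alpha_r)$ has each $\Phi(\alpha_a)$ equal to either $\emptyset$ or $\Delta_2^+$ (the only inversion sets available), so exactly one $\alpha_a$ equals $\wo[2]$ and the rest equal $\Id_2$. Hence every decomposition of $\Phi(\wo[2])$ is trivial, which is precisely irreducibility.

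Next, for $m\geq 3$, I would exhibit a non-trivial decomposition. Pick any $\alpha \in S_m$ with $\alpha \neq \Id_m$ and $\alpha \neq \wo[m]$; such an $\alpha$ exists since $|S_m| = m! \geq 6$ when $m \geq 3$ (indeed, even the simple transposition $\alpha = (2,1,3,4,\ldots,m)$ works). By Lemma~\ref{complement lemma} we have
$$
\Phi(\wo[m]) = \Delta_m^+ = \Phi(\alpha) \sqcup \Phi(\wo[m]\alpha),
$$
and both $\Phi(\alpha)$ and $\Phi(\wo[m]\alpha)$ are proper non-empty subsets of $\Delta_m^+$ (the former because $\alpha \neq \Id_m$, the latter because $\wo[m]\alpha \neq \Id_m$, equivalently $\alpha \neq \wo[m]$). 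This is a non-trivial decomposition of $\Phi(\wo[m])$, so $\wo[m]$ is reducible.

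There is no real obstacle here; the only thing to notice is that we need \emph{both} factors in $\Phi(\alpha) \sqcup \Phi(\wo[m]\alpha)$ to differ from $\Phi(\wo[m])$ itself, which is exactly what the constraint $\alpha \notin \{\Id_m, \wo[m]\}$ guarantees, and this constraint can be satisfied precisely when $m \geq 3$.
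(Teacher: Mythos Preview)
Your proof is correct and follows essentially the same argument as the paper: use Lemma~\ref{complement lemma} with any $\alpha\in S_m\setminus\{\Id_m,\wo[m]\}$ to get a non-trivial decomposition when $m\geq 3$, and observe that for $m=2$ the only inversion sets are $\emptyset$ and $\{(1,2)\}$, forcing every decomposition of $\Phi(\wo[2])$ to be trivial.
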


\begin{proof}
By Lemma~\ref{complement lemma} any
$\tau \in S_m \setminus \{\wo[m],\Id_m\}$ gives a non-trivial decomposition
$\Inv{\wo[m]}=\Delta^+_m =  \Inv\tau \sqcup \Inv{\wo[m]\tau}$, and the set
$S_m \setminus \{\wo[m],\Id_m\}$ is non-empty if $m\geq 3$.  Conversely,
$\Inv{\wo[2]} = \{(1,2)\}$ is clearly irreducible.
\end{proof}

Next we discuss some basic properties of decompositions.


As the following proposition shows, the union of an arbitrary choice of sets appearing in a decomposition
is always the inversion set of a permutation.

\begin{proposition}\label{coalescence}
Suppose $\Delta_n^+ = \Inv{\alpha_1} \sqcup \Inv{\alpha_2} \sqcup \dots \sqcup \Inv{\alpha_r}$
is a decomposition and let $A$ be any subset of $\{1,2,\dots,r\}$.
Then there exists $\alpha \in S_n$ such that $\Inv\alpha = \sqcup_{a\in A} \Inv{\alpha_a}$.
\end{proposition}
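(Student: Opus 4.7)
The plan is to apply Kostant's characterization (Proposition~\ref{proposition 11}) to the candidate set $\Phi := \bigsqcup_{a \in A} \Phi(\alpha_a)$. It suffices to verify that $\Phi$ is both closed and co-closed. Because $\Delta_n^+ \setminus \Phi = \bigsqcup_{a \notin A} \Phi(\alpha_a)$ is a union of exactly the same type (a disjoint union of some of the $\Phi(\alpha_c)$'s, corresponding to the complementary index set $A^c$), showing that such unions are closed will immediately give both properties by symmetry. So the entire argument reduces to verifying the closed condition for a union of this form.

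To that end, I would fix $(i,j), (j,k) \in \Phi$ with $i < j < k$, and locate the unique indices $a, b \in A$ with $(i,j) \in \Phi(\alpha_a)$ and $(j,k) \in \Phi(\alpha_b)$. If $a = b$, the closed condition for the single inversion set $\Phi(\alpha_a)$ (which holds since it is an inversion set) yields $(i,k) \in \Phi(\alpha_a) \subseteq \Phi$. The interesting case is $a \neq b$; here I would let $c \in \{1, \ldots, r\}$ be the unique index with $(i,k) \in \Phi(\alpha_c)$, and argue by contradiction that $c \in A$.

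Suppose $c \notin A$; then in particular $c \neq a$ and $c \neq b$. Disjointness of the decomposition forces $(i,j) \notin \Phi(\alpha_c)$ and $(j,k) \notin \Phi(\alpha_c)$. But $\Phi(\alpha_c)$ satisfies the co-closed condition, so $(i,k) \notin \Phi(\alpha_c)$, contradicting the choice of $c$. Hence $c \in A$ and $(i,k) \in \Phi$, establishing that $\Phi$ is closed. Applying the same argument with $A$ replaced by $A^c$ shows $\Delta_n^+ \setminus \Phi$ is closed, so $\Phi$ is co-closed, and Proposition~\ref{proposition 11} produces the desired $\alpha \in S_n$ with $\Phi(\alpha) = \Phi$.

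There is no real obstacle here: the essential content is that the closed/co-closed dichotomy is preserved under taking unions from a decomposition, precisely because each individual $\Phi(\alpha_c)$ contributes \emph{both} conditions and the pieces are pairwise disjoint. The only subtlety is recognizing that one cannot conclude $(i,k) \in \Phi$ directly from closedness of the individual $\Phi(\alpha_a)$'s (since $(i,j)$ and $(j,k)$ can live in different pieces), which is exactly where the co-closed condition on the \emph{third} piece $\Phi(\alpha_c)$ comes into play.
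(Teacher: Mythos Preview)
Your proof is correct and follows essentially the same route as the paper's: both arguments establish closedness of $\Phi=\bigsqcup_{a\in A}\Inv{\alpha_a}$ by using the co-closed property of the pieces $\Inv{\alpha_c}$ with $c\notin A$, and then invoke Proposition~\ref{proposition 11}. The only cosmetic differences are that the paper first reduces to the case $|A|=2$ (which is unnecessary---your direct treatment of arbitrary $A$ is cleaner), and that your case split $a=b$ versus $a\neq b$ is also unnecessary, since the contradiction argument via co-closedness of $\Inv{\alpha_c}$ already covers both cases uniformly.
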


\begin{proof}
Clearly it suffices to prove the assertion for doubleton sets $A=\{p,q\}$.
Thus it suffices to show that  $\Inv{\alpha_p} \sqcup \Inv{\alpha_q}$ is both closed and co-closed.
For ease of notation, we will assume $A = \{1,2\}$.
First we show that $\Inv{\alpha_1} \sqcup \Inv{\alpha_2}$ is co-closed.  Suppose that
$(i,j),(j,k) \notin \Inv{\alpha_1} \sqcup \Inv{\alpha_2}$.  Then for $b=1,2$ we have
$(i,k) \notin \Inv{\alpha_b}$ since $\Inv{\alpha_b}$ is co-closed.
Thus $(i,k) \notin \Inv{\alpha_1} \sqcup \Inv{\alpha_2}$ which shows $\Inv{\alpha_1} \sqcup \Inv{\alpha_2}$
is co-closed.

To see that $\Inv{\alpha_1} \sqcup \Inv{\alpha_2}$ is closed, suppose that
$(i,j), (j,k) \in \Inv{\alpha_1} \sqcup \Inv{\alpha_2}$.  Then  for $a=3,4,\dots,r$ we have
$(i,j),(j,k) \notin \Inv{\alpha_a}$ and thus $(i,k) \notin \Inv{\alpha_a}$ since $\Inv{\alpha_a}$ is co-closed.
Hence $(i,k) \notin \sqcup_{a=3}^r \Inv{\alpha_a}$ which implies that $(i,k) \in \Inv{\alpha_1} \sqcup \Inv{\alpha_2}$.
This shows that that $\Inv{\alpha_1} \sqcup \Inv{\alpha_2}$ is closed and completes the proof of proposition.
\end{proof}

Note that some hypothesis of the type
$\Delta_n^+ = \Inv{\alpha_1} \sqcup \Inv{\alpha_2} \sqcup \dots \sqcup \Inv{\alpha_r}$
is necessary in the above proposition; arbitrary unions of inversion sets need not be inversion sets.
For example, consider $n=3$, $\alpha_1 = (2,1,3)$ and $\alpha_2 = (1,3,2)$.  Then $\Inv{\alpha_1} = \{(1,2)\}$,
$\Inv{\alpha_2} = \{(2,3)\}$ and $\Inv{\alpha_1} \sqcup \Inv{\alpha_2}$ is not closed and so is not an inversion set.

\begin{corollary}\label{coalesce alpha decomposition}
If $\Inv\alpha = \Inv{\alpha_1} \sqcup \Inv{\alpha_2} \sqcup \dots \sqcup \Inv{\alpha_r}$ and $A$ is any subset of
$\{1,2,\dots,r\}$ then there exists $\alpha_A\in S_n$ with $\Inv{\alpha_A} = \sqcup_{a\in A} \Inv{\alpha_a}$.
\end{corollary}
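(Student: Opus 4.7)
The plan is to reduce this immediately to Proposition~\ref{coalescence}, which already establishes the analogous statement for decompositions of the full positive root system $\Delta_n^+$. The only obstruction is that we are given a decomposition of $\Inv\alpha$, not of $\Delta_n^+$, so the first step is to promote the given decomposition to a decomposition of $\Delta_n^+$ by appending the complementary inversion set.

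Specifically, by Lemma~\ref{complement lemma} we have $\Delta_n^+ = \Inv\alpha \sqcup \Inv{\wo\alpha}$. Substituting the given decomposition of $\Inv\alpha$ yields
$$\Delta_n^+ = \Inv{\alpha_1} \sqcup \Inv{\alpha_2} \sqcup \dots \sqcup \Inv{\alpha_r} \sqcup \Inv{\wo\alpha},$$
which is a decomposition of $\Delta_n^+$ into $r+1$ inversion sets. Given any subset $A \subseteq \{1,2,\dots,r\}$, we regard it as a subset of the index set $\{1,2,\dots,r,r+1\}$ of this enlarged decomposition. Proposition~\ref{coalescence} then produces an element $\alpha_A \in S_n$ such that $\Inv{\alpha_A} = \bigsqcup_{a \in A} \Inv{\alpha_a}$, which is exactly the desired conclusion.

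There is no real obstacle here; the content of the result lies entirely in Proposition~\ref{coalescence}, and the corollary is a one-line deduction once one notices that Lemma~\ref{complement lemma} supplies the missing piece needed to enlarge a decomposition of an arbitrary inversion set to a decomposition of $\Delta_n^+$.
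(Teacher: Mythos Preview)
Your proof is correct and is essentially identical to the paper's own argument: set $\alpha_{r+1}=\wo\alpha$, use Lemma~\ref{complement lemma} to obtain the decomposition $\Delta_n^+=\Inv{\alpha_1}\sqcup\cdots\sqcup\Inv{\alpha_{r+1}}$, and then apply Proposition~\ref{coalescence}.
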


\begin{proof}
Set $\alpha_{r+1}=\wo\alpha$.
By Lemma~\ref{complement lemma} we then have the decomposition
$\Delta_n^{+}=\Inv{\alpha_1}\sqcup\cdots\sqcup \Inv{\alpha_{r+1}}$.
The corollary then follows from Proposition~\ref{coalescence} applied to this decomposition.
\end{proof}

Recall that an element $\alpha\in S_n$ is called reducible if there exists a non-trivial decomposition
$\Inv\alpha = \Inv{\alpha_1}\sqcup\cdots\sqcup\Inv{\alpha_r}$.   By Corollary~\ref{coalesce alpha decomposition}
if $\alpha$ is reducible there exists such a non-trivial decomposition with $r=2$.

Given a decomposition $\Delta_{n}^{+}=\Inv{\alpha_1}\sqcup\cdots\sqcup\Inv{\alpha_r}$ we may further decompose
each $\Inv{\alpha_i}$ until we arrive at an {\em irreducible decomposition}, i.e., a decomposition
$\Delta^+_n = \Inv{\gamma_1} \sqcup \Inv{\gamma_2} \sqcup \dots \sqcup \Inv{\gamma_s}$ where each $\Inv{\gamma_i}$
is irreducible.
This provides a finer decomposition of $\Delta_n^+$ than the one we began with.
    Conversely we may get a coarser decomposition from the original decomposition
    by choosing one or more disjoint subsets
  $A_i \subset \{1,2,\dots,r\}$ and replacing $\sqcup_{a \in A_i} \Inv{\alpha_a}$ by the single inversion set
  $\Inv{\alpha_{A_i}}$ where $\alpha_{A_i}$ is the element whose existance is guaranteed by
  Corollary~\ref{coalesce alpha decomposition}.  Clearly every decomposition of
  $\Delta_n^+$ may be obtained in this manner from some irreducible decomposition.
  For this reason, studying and classifying the irreducible decompositions is of particular interest.
  Accordingly, we characterize the irreducible decompositions in our main theroem, Theorem~\ref{main theorem},
  as well as in the analogous theorems for root systems of type B and C.

\section{Restriction maps and proof of the main theorem}\label{section inflation}

\medskip
Given a subset $\F\subseteq\{1,2,\ldots, n\}$  and an element $\alpha\in S_n$ we obtain a permutation
in $S_m$, with $m=|\F|$, by noting how $\alpha$ changes the relative order of elements of $\F$.  This procedure
gives rise to a map of sets $\theta_\F\colon S_n\longrightarrow S_m$ called a {\em restriction map}.
Although not homomorphisms, the maps $\theta_{\F}$ are useful in making inductive arguments on inversion sets.
In this section we use restriction maps to establish several results on simple and irreducible permutations,
culminating in a proof of the main theorem (Theorem~\ref{main theorem}).

We start by giving formal descriptions of the restriction maps and the inflation procedure.

\begin{definitions}

\renewcommand{\theenumi}{{\em \alph{enumi}}}
\begin{enumerate}
\item
Two sequences $x_1,x_2,\dots,x_m$ and $y_1,y_2,\dots,y_m$ each comprised of $m$ distinct real numbers
are {\em order isomorphic} if $x_i > x_j$ if and only if $y_i > y_j$.

\smallskip
\item
Suppose ${\F}$ is a subset of  $\{1,2,\dots,n\}$ of size $m = |\F|$, and write
$\F=\{i_1,i_2,\dots,i_m\}$ where $i_1 < i_2 < \dots < i_m$.  For any $\alpha\in S_n$
restricting $\alpha$ to $\F$
yields a sequence $\alpha(i_1),\alpha(i_2),\dots,\alpha(i_m)$ which is order isomorphic to the sequence
$\mu(1),\mu(2),\dots,\mu(m)$ corresponding to a unique element $\mu \in S_m$.  We denote this element
$\mu$  by $\mu = \theta_\F(\alpha)$ and use $\theta_{\F}\colon S_n\longrightarrow S_m$ for the corresponding
map of sets.

\smallskip
\item
For $\F \subseteq \{1,2,\dots,n\}$ we write  $\Delta^+_\F$ to denote the
set $\Delta^+_\F := \{(i,j) \in \Delta^+_n \mid i,j \in \F\}$.

\smallskip
\item A decomposition of $\{1,\ldots, n\}$ into an {\em ordered disjoint union of intervals} is a decomposition
$\{1,2,\dots,n\} = U_1 \sqcup U_2 \sqcup \dots \sqcup U_m$ where each $U_i$ is an interval and where for each
$1\leq i < j\leq m$, we have $a<b$ for each $a\in U_i$ and $b\in U_j$.

\smallskip
\item
Given a decomposition of
$\{1,\ldots, n\}$ into an  ordered disjoint union of intervals as above, a subset
$\F \subset \{1,2,\dots,n\}$ is {\em admissible} if $|\F \cap U_i|=1$ for all $i=1,2,\dots,m$.

\end{enumerate}
\end{definitions}

Note that the condition of being admissible in ({\em e}) depends on the choice of decomposition into
ordered disjoint intervals.  In every case we use this term will be careful to make the choice of decomposition
explicit.

Suppose now that we are given a decomposition of $\{1,\ldots, n\}$ into ordered disjoint intervals
$U_1$,\ldots, $U_m$.   Choose $\sigma\in S_m$ and $\beta_i\in S_{|U_i|}$ for $i=1$,\ldots, $m$.
In addition to the description by shuffling cards given in \S\ref{basic defs section},
the inflation $\alpha := \sigma[\beta_1,\beta_2,\dots,\beta_m] \in S_n$, $n=\sum |U_i|$,
is characterized by the following two conditions:
\begin{enumerate}
\item   $\theta_{U_i}(\alpha) = \beta_i$ for all $i=1,2,\dots,m$.
\item   $\theta_\F(\alpha)=\sigma$ for any admissible $\F$.
\end{enumerate}

\noindent
We will frequently use the second fact, which allows us to recover $\sigma$ using any admissible subset $\F$.


The following lemma, computing the inversion set of $\alpha=\sigma[\beta_1,\ldots, \beta_m]$ from those
of its components, follows from either of the descriptions of the inflation procedure.
The reader may find an illustration of this lemma and the arguments for its proof in \S\ref{inflation appendix}.

\begin{lemma}\label{easy lemma}
Suppose $\alpha=\sigma[\beta_1,\beta_2,\dots,\beta_m]\in S_n$ where $U_1$,\ldots, $U_m$ is a decomposition of
$\{1,\ldots, n\}$ into ordered disjoint intervals, $\sigma\in S_m$, and $\beta_i\in S_{|U_i|}$ for $i=1,\ldots, m$.
Let $\Psi_i$ denote the order preserving bijection $\Psi_i : U_i \to \{1,2,\dots,|U_i|\}$.
Then
$$\Inv\alpha = \{(a,b) \mid a \in U_{i}, b \in U_{j}, (i,j) \in \Inv{\sigma}\}  \sqcup
\left(\sqcup_{i=1}^m\Psi_i^{-1}(\Inv{\beta_i})\right)\ .$$
\end{lemma}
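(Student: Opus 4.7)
The plan is to verify the formula pairwise: every element of $\Delta_n^+$ is a pair $(a,b)$ with $a<b$, and such a pair falls into exactly one of two cases according to whether $a$ and $b$ lie in a common interval $U_i$ or in distinct intervals $U_i, U_j$ (in which case the ordering of the intervals forces $i<j$). The two summands on the right-hand side correspond precisely to these two cases, so their disjointness as sets is automatic once we identify each with the appropriate subset of $\Phi(\alpha)$.

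For the \emph{intra-interval} case, I will fix $i$ and the pair $(a,b)$ with $a,b\in U_i$, and use the characterization of inflation which says $\theta_{U_i}(\alpha)=\beta_i$. Since $\Psi_i$ is an order-preserving bijection, $a<b$ in $U_i$ corresponds to $\Psi_i(a)<\Psi_i(b)$ in $\{1,\dots,|U_i|\}$. Because the sequence $\alpha(i_1),\dots,\alpha(i_{|U_i|})$ (where $i_k$ is the $k$-th element of $U_i$) is order isomorphic to $\beta_i(1),\dots,\beta_i(|U_i|)$, we obtain $\alpha(a)>\alpha(b)$ iff $\beta_i(\Psi_i(a))>\beta_i(\Psi_i(b))$. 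Hence $(a,b)\in\Phi(\alpha)$ iff $(\Psi_i(a),\Psi_i(b))\in\Phi(\beta_i)$, i.e.\ iff $(a,b)\in\Psi_i^{-1}(\Phi(\beta_i))$. Taking the union over $i$ gives the second summand.

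For the \emph{inter-interval} case, I will fix $(a,b)$ with $a\in U_i$, $b\in U_j$, $i<j$, and pick an admissible set $\F=\{k_1,\dots,k_m\}$ (with $k_1<\cdots<k_m$) that contains $a$ and $b$; this is possible since $a$ and $b$ lie in different $U$'s and each other $U_\ell$ is non-empty. Then $a=k_i$ and $b=k_j$, and by the second characterizing property of the inflation, $\theta_\F(\alpha)=\sigma$. Order isomorphism of $\alpha(k_1),\dots,\alpha(k_m)$ with $\sigma(1),\dots,\sigma(m)$ yields $\alpha(a)>\alpha(b)$ iff $\sigma(i)>\sigma(j)$. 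Thus $(a,b)\in\Phi(\alpha)$ iff $(i,j)\in\Phi(\sigma)$, which is exactly the first summand.

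Combining the two cases covers all of $\Delta_n^+$, and I note that the right-hand side is a disjoint union because the two summands are supported on disjoint subsets of $\Delta_n^+$ (inter-interval pairs versus intra-interval pairs), and within the second summand the $\Psi_i^{-1}(\Phi(\beta_i))$ are pairwise disjoint since the $U_i$'s are. There is no serious obstacle here; the only thing to be careful about is the bookkeeping between the global indices $(a,b)$ and their images $(\Psi_i(a),\Psi_i(b))$ and $(i,j)$ under the two restriction maps, and this is handled cleanly by invoking the two characterizing properties of the inflation $\sigma[\beta_1,\dots,\beta_m]$ stated just above.
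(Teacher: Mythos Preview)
Your proof is correct and follows exactly the approach the paper indicates: the paper does not give a detailed argument but simply remarks that the lemma ``follows from either of the descriptions of the inflation procedure,'' and your proof carries this out cleanly by invoking the two characterizing properties $\theta_{U_i}(\alpha)=\beta_i$ and $\theta_{\F}(\alpha)=\sigma$ for admissible $\F$. Nothing more is needed.
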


If a permutation $\sigma$ is reducible it is clear that any inflation $\sigma[\Id_{z_1},\ldots, \Id_{z_m}]$
(for any positive integers $z_1$,\ldots, $z_m$) is also reducible: one simply takes a decomposition of $\Inv\sigma$
and inflates the permutations which appear.  However, it is not immediately clear that an inflation of an
irreducible element remains irreducible; apriori it seems that there could be decompositions of the inflation
which do not respect the inflation structure, and therefore do not come from decompositions of the original $\sigma$.
That this can never happen is a consequence of the following more precise statement.

\begin{lemma}\label{inflate inversion set lemma}
For any $\sigma\in S_m$, and any positive integers $z_1$,\ldots, $z_m$, inflation of the
decompositions of $\Inv\sigma$ gives a one-to-one correspondence between the
decompositions of $\Inv\sigma$ and the decompositions of $\Inv{\sigma[\Id_{z_1},\ldots, \Id_{z_m}]}$.
\end{lemma}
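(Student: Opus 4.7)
The plan is to establish the bijection by exhibiting the inflation map on decompositions and showing it is both injective and surjective; the real content lies entirely in surjectivity. Write $\alpha := \sigma[\Id_{z_1},\ldots,\Id_{z_m}]$ and let $U_1,\ldots,U_m$ be the corresponding ordered intervals partitioning $\{1,\ldots,n\}$. Applying Lemma \ref{easy lemma} with each $\beta_i = \Id_{z_i}$ gives
$$\Inv{\alpha} = \{(a,b)\mid a\in U_i,\ b\in U_j,\ (i,j)\in\Inv{\sigma}\},$$
so, crucially, $\Inv{\alpha}$ contains no pair lying inside a single $U_i$.

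To define the map, I start from a decomposition $\Inv{\sigma} = \Inv{\sigma_1}\sqcup\cdots\sqcup\Inv{\sigma_r}$, set $\alpha_k := \sigma_k[\Id_{z_1},\ldots,\Id_{z_m}]$, and use Lemma \ref{easy lemma} again to see that the $\Inv{\alpha_k}$ are pairwise disjoint with union $\Inv{\alpha}$. Injectivity of the resulting map is immediate: applying $\theta_{\F}$ to $\alpha_k$ for any admissible $\F$ recovers $\sigma_k$.

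For surjectivity, I take an arbitrary decomposition $\Inv{\alpha} = \Inv{\alpha_1}\sqcup\cdots\sqcup\Inv{\alpha_r}$ and aim to show each $\alpha_k$ is forced to have the form $\sigma_k[\Id_{z_1},\ldots,\Id_{z_m}]$. The first observation is cheap: since $\Inv{\alpha}$ has no intra-$U_i$ inversions, neither does any $\Inv{\alpha_k}$, hence $\theta_{U_i}(\alpha_k) = \Id_{z_i}$ for every $i$. The heart of the argument is then the \emph{block-independence claim}: for $a,a'\in U_i$ and $b,b'\in U_j$ with $i<j$, membership of $(a,b)$ in $\Inv{\alpha_k}$ depends only on $(i,j)$. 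I would reduce to the two one-coordinate cases ($a\neq a'$, $b=b'$ or $a=a'$, $b\neq b'$), examine the chain of three roots involved (e.g.\ $(a,a'),(a',b),(a,b)$), and use that the intra-block root lies in no $\Inv{\alpha_k}$. By Proposition \ref{proposition 11}, no $\Inv{\alpha_k}$ can realize the triple state $(T,T,F)$ or $(F,F,T)$ on this chain. The forward direction of the claim falls out immediately inside a single $\Inv{\alpha_k}$, since $(F,F,T)$ is forbidden. The reverse direction does \emph{not} follow from the closed/co-closed conditions on a single piece, and this is where I expect the main obstacle to lie: the argument must be completed by contradiction, assuming the "missing" root lives in some $\Inv{\alpha_l}$ with $l\neq k$ and deriving a forbidden pattern there, using disjointness of the decomposition to close the loop.

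Once block independence is established, I define $\Inv{\sigma_k}\subseteq\Delta_m^+$ by declaring $(i,j)\in\Inv{\sigma_k}$ iff any (equivalently all) $(a,b)$ with $a\in U_i$, $b\in U_j$ lie in $\Inv{\alpha_k}$. The closed and co-closed conditions for $\Inv{\sigma_k}$ transfer directly from those for $\Inv{\alpha_k}$ by choosing representatives, so by Proposition \ref{proposition 11} some $\sigma_k\in S_m$ realizes this inversion set. Comparing $\theta_{U_i}(\alpha_k) = \Id_{z_i}$ and $\theta_{\F}(\alpha_k) = \sigma_k$ for admissible $\F$ against the two defining properties of the inflation procedure identifies $\alpha_k = \sigma_k[\Id_{z_1},\ldots,\Id_{z_m}]$, and the decomposition $\Inv{\sigma} = \Inv{\sigma_1}\sqcup\cdots\sqcup\Inv{\sigma_r}$ is then read off from the bijection between inversions of $\alpha$ and of $\sigma$. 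The only delicate step throughout is the reverse direction of block independence; the rest is bookkeeping of restrictions and of Lemma \ref{easy lemma}.
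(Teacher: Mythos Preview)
Your proposal is correct and follows the same overall architecture as the paper: reduce to showing that in any decomposition of $\Inv{\alpha}$ each piece is constant on blocks $U_i\times U_j$, then pull back to a decomposition of $\Inv{\sigma}$ via closed/co-closed. Your handling of the ``reverse direction'' by passing to the piece $\Inv{\alpha_l}$ containing the missing root and deriving a co-closed violation there is valid.

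The one genuine difference is in how block independence is obtained. The paper avoids the chain-and-contradiction argument entirely: from the fact that no intra-block root lies in any $\Inv{\alpha_k}$ it observes that each $\alpha_k$ is order-preserving on every $U_i$. Then, for $(i,j)\in\Inv{\sigma}$, it picks the single extreme root $(a_0,b_1)$ with $a_0=\min U_i$ and $b_1=\max U_j$, locates the unique $k$ with $(a_0,b_1)\in\Inv{\alpha_k}$, and uses monotonicity to conclude $\alpha_k(b)\leq\alpha_k(b_1)<\alpha_k(a_0)\leq\alpha_k(a)$ for all $a\in U_i$, $b\in U_j$, giving $U_i\times U_j\subseteq\Inv{\alpha_k}$ in one stroke. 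This is shorter than your approach and sidesteps the need to invoke a second piece $\Inv{\alpha_l}$; on the other hand, your argument stays entirely at the level of roots and the closed/co-closed axioms, which is perhaps more in the spirit of Proposition~\ref{proposition 11}.
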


\begin{proof}
Set $\alpha=\sigma[\Id_{z_1},\ldots, \Id_{z_m}]$ and $n=\sum z_i$.
We must show that for any decomposition $\Inv{\alpha}=\Inv{\alpha_1}\sqcup\cdots \sqcup \Inv{\alpha_r}$ there are
unique $\sigma_1$,\ldots, $\sigma_r\in S_m$ such that
$\alpha_k=\sigma_k[\Id_{z_1},\ldots, \Id_{z_m}]$ for $k=1$,\ldots, $r$. Lemma~\ref{easy lemma} then implies that
$\Inv{\sigma}=\Inv{\sigma_1}\sqcup \cdots \sqcup \Inv{\sigma_r}$.

Let $\{1,2,\dots,n\} = U_1 \sqcup U_2 \sqcup \dots \sqcup U_m$ be the decomposition into ordered disjoint
intervals corresponding to the inflation $\sigma[\Id_{z_1},\Id_{z_2},\dots,\Id_{z_m}]$
By Lemma~\ref{easy lemma},
we have $\Inv{\alpha} = \{(a,b) \in \Delta_n^+ \mid a \in U_{i}, b \in U_{j}, (i,j) \in \Inv{\sigma}\}$.

Choose any root $(i,j)\in \Inv\sigma$. The fact that no root $(a,a')$ with $a,a'\in U_i$ is in $\Inv\alpha$ means
that no such root is in $\Inv{\alpha_1}$,\ldots,  $\Inv{\alpha_r}$, and so each of $\alpha_1$,\ldots,  $\alpha_r$
preserves the relative order of the elements in $U_i$.  Similarly each of $\alpha_1$, \ldots,  $\alpha_r$
preserves the relative order of elements in $U_j$.

Let $a_0$ be the smallest element in $U_i$ and $b_1$ the largest element in $U_j$.  The root $(a_0,b_1)$ is
in $\Inv\alpha$ and so must be contained in one of $\Inv{\alpha_1}$,\ldots, $\Inv{\alpha_r}$.
Suppose that $(a_0,b_1)\in \Inv{\alpha_k}$, i.e., that $\alpha_k(b_1) < \alpha_k(a_0)$.
Then the fact that $\alpha_k$ preserves the relative order of the elements in $U_i$ and $U_j$ now implies
that $U_i\times U_j:=\{(a,b)\mid a\in U_i, b\in U_j\} \subseteq\Inv{\alpha_k}$.
Since the decomposition of $\Inv{\alpha}$ is into disjoint subsets, we therefore have
$U_i\times U_j\cap \Inv{\alpha_{\ell}}=\emptyset$ if $\ell\neq k$.

For each $k=1$,\ldots, $r$ set
$T_k=\{(i,j)\in \Inv\sigma \mid U_i\times U_j\subseteq \Inv{\alpha_k}\}$.   We have just shown that
for any $(i,j)\in \Inv{\sigma}$ there is a unique $k$ such that
$U_i\times U_j\cap \Inv{\alpha_k}\neq \emptyset$, and for that $k$ we have
$U_i\times U_j\subseteq \Inv{\alpha_k}$.  From this we conclude first that $\Inv{\sigma}=T_1\sqcup \cdots \sqcup T_k$,
and second, since $\Inv{\alpha}=\cup_{(i,j)\in \Inv{\sigma}} U_i\times U_j$, that
$\Inv{\alpha_k}=\cup_{(i,j)\in T_k} U_i\times U_j$ for each $k$.

The fact that $\Inv{\alpha_k}$ is
both closed and co-closed implies that the same holds for $T_k$, and thus there is a unique permuation
$\sigma_k\in S_m$ such that $T_k=\Inv{\sigma_k}$.  Lemma~\ref{easy lemma} then says that
$\Inv{\sigma_k[\Id_{z_1},\ldots, \Id_{z_k}]}= \cup_{(i,j)\in T_k} U_i\times U_j$.  Since the inversion
set uniquely determines the permutation, we therefore have $\alpha_k=
\sigma_k[\Id_{z_1},\ldots, \Id_{z_k}]$ for each $k=1$,\ldots, $r$.
\end{proof}

\begin{corollary}\label{irreducible under inflation}
Let $\sigma \in S_m$, and let $z_1,z_2,\dots,z_m$ be positive integers.
Then the permutation $\alpha:=\sigma[\Id_{z_1},\Id_{z_2},\dots,\Id_{z_m}] \in S_n$ is irreducible
if and only if $\sigma$ is irreducible.
\end{corollary}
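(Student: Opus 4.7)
The plan is to derive this directly from Lemma~\ref{inflate inversion set lemma}, which already establishes a bijection between decompositions of $\Inv{\sigma}$ and decompositions of $\Inv{\alpha}$ via the rule $\sigma_k \mapsto \sigma_k[\Id_{z_1},\ldots,\Id_{z_m}]$. All that remains is to check that this bijection sends trivial decompositions to trivial decompositions, since reducibility is by definition the existence of a non-trivial decomposition.

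First I would unpack what triviality means on each side. A decomposition $\Inv{\sigma} = \Inv{\sigma_1} \sqcup \cdots \sqcup \Inv{\sigma_r}$ is trivial precisely when some $\sigma_a = \sigma$ and every other $\sigma_k = \Id_m$. Similarly a decomposition $\Inv{\alpha} = \Inv{\alpha_1} \sqcup \cdots \sqcup \Inv{\alpha_r}$ is trivial precisely when some $\alpha_a = \alpha$ and all other $\alpha_k = \Id_n$. Under the bijection of Lemma~\ref{inflate inversion set lemma}, $\alpha_k = \sigma_k[\Id_{z_1},\ldots,\Id_{z_m}]$, so $\alpha_k = \Id_n$ iff $\sigma_k = \Id_m$ (both are equivalent to having empty inversion set), and $\alpha_k = \alpha$ iff $\sigma_k = \sigma$ (since $\Inv{\alpha_k} = \Inv{\alpha}$ forces the inflations to coincide, which forces $\sigma_k = \sigma$ because the decomposition datum $z_1,\ldots,z_m$ is fixed).

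Combining these observations, the bijection of Lemma~\ref{inflate inversion set lemma} restricts to a bijection between non-trivial decompositions of $\Inv{\sigma}$ and non-trivial decompositions of $\Inv{\alpha}$. Therefore $\sigma$ is reducible iff $\alpha$ is reducible, which is the contrapositive of the desired equivalence.

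No real obstacle should arise here: the whole content is packaged in the preceding lemma, and the corollary amounts to a one-line observation that the bijection respects triviality. The only mildly subtle point is confirming $\alpha_k = \alpha \Leftrightarrow \sigma_k = \sigma$, which follows because a permutation is uniquely determined by its inversion set (as recalled early in \S\ref{Introduction}) and the inflation structure is preserved under the correspondence.
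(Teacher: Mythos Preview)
Your proposal is correct and matches the paper's approach: the paper states this as an immediate corollary of Lemma~\ref{inflate inversion set lemma} with no separate proof, and your argument simply spells out why the bijection of that lemma preserves triviality of decompositions. The only point worth noting is that your careful verification of $\alpha_k = \alpha \Leftrightarrow \sigma_k = \sigma$ is more than the paper bothers to say, but it is exactly the right justification.
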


\begin{corollary} \label{easy corollary}
Let $\alpha=\sigma[\beta_1,\beta_2,\dots,\beta_m]  \neq \Id$
where $\beta_i \in S_{|U_i|}$ for $i=1,2,\dots,m$.
Then $\alpha$ is irreducible if and only if exactly one of the permutations
$\sigma,\beta_1,\beta_2,\dots,\beta_m$ is a non-identity permutation and that non-identity permutation is
itself irreducible.
In particular, if $\alpha$ is irreducible with $\sigma \neq \Id$ then $\alpha=\sigma[\Id,\Id,\dots,\Id]$ where $\sigma$
is irreducible.
\end{corollary}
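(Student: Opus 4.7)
The plan is to use Lemma~\ref{easy lemma} to reduce the question to counting how many of $\sigma, \beta_1, \ldots, \beta_m$ are non-identity, and then to combine this with Corollary~\ref{irreducible under inflation}. Lemma~\ref{easy lemma} already writes $\Inv\alpha$ as a disjoint union of the ``cross-block'' contribution coming from $\sigma$ and the ``within-block'' contributions coming from each $\beta_i$. The crucial observation is that each of these pieces is itself an inversion set: the cross-block piece is $\Inv{\sigma[\Id_{z_1},\ldots,\Id_{z_m}]}$, and the piece $\Psi_i^{-1}(\Inv{\beta_i})$ is the inversion set of the inflation $\Id_m[\Id_{z_1},\dots,\Id_{z_{i-1}},\beta_i,\Id_{z_{i+1}},\dots,\Id_{z_m}]$ (both facts follow by specializing Lemma~\ref{easy lemma}).

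For the ``only if'' direction I would argue by contrapositive. If at least two of $\sigma,\beta_1,\ldots,\beta_m$ are non-identity, then grouping these pieces separately exhibits a non-trivial decomposition of $\Inv\alpha$ with at least two non-empty terms, so $\alpha$ is reducible. If exactly one of them, say $\sigma$ or some $\beta_k$, is non-identity but is itself reducible, then any non-trivial decomposition of that factor may be inflated (using Lemma~\ref{easy lemma} in reverse) to produce a non-trivial decomposition of $\Inv\alpha$.

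For the ``if'' direction I would split into the two cases allowed by the hypothesis. If the unique non-identity factor is $\sigma$, then $\alpha = \sigma[\Id_{z_1},\ldots,\Id_{z_m}]$ and Corollary~\ref{irreducible under inflation} gives the conclusion immediately. If instead the unique non-identity factor is some $\beta_k$, then by Lemma~\ref{easy lemma} we have $\Inv\alpha = \Psi_k^{-1}(\Inv{\beta_k})$, so every root of $\Inv\alpha$ lies inside $U_k\times U_k$. Given any decomposition $\Inv\alpha = \Inv{\alpha_1}\sqcup\cdots\sqcup\Inv{\alpha_r}$, each $\Inv{\alpha_j}$ is therefore also contained in $U_k\times U_k$; a short argument analogous to the one inside the proof of Lemma~\ref{inflate inversion set lemma} (using closedness/co-closedness to rule out inversions between distinct blocks) forces $\alpha_j$ to have the form $\Id_m[\Id_{z_1},\ldots,\gamma_j,\ldots,\Id_{z_m}]$ with $\gamma_j$ in the $k$-th slot, and then $\Psi_k$ translates the decomposition of $\Inv\alpha$ into a decomposition of $\Inv{\beta_k}$. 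Irreducibility of $\beta_k$ then forces the decomposition of $\Inv\alpha$ to be trivial.

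The final ``In particular'' clause is immediate from the biconditional: if $\alpha$ is irreducible and $\sigma\neq\Id$, then $\sigma$ must be the unique non-identity factor, forcing all $\beta_i=\Id_{z_i}$ and $\sigma$ irreducible. The main obstacle I expect is the within-block case: one must carefully justify that an arbitrary decomposition of $\Inv\alpha$ ``respects'' the block structure, but this is essentially free here since all roots of $\Inv\alpha$ already sit inside one block.
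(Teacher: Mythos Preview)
Your proposal is correct and follows essentially the same route as the paper: both start from the decomposition of $\Inv\alpha$ furnished by Lemma~\ref{easy lemma} into the cross-block piece $\Inv{\sigma[\Id_{z_1},\ldots,\Id_{z_m}]}$ and the within-block pieces $\Inv{\Id_m[\Id,\ldots,\beta_i,\ldots,\Id]}$, and both appeal to Corollary~\ref{irreducible under inflation} for the $\sigma$ case. If anything, your treatment of the ``if'' direction is more careful than the paper's: the paper's proof only explicitly argues the contrapositive direction (more than one non-identity factor, or a reducible factor, forces $\alpha$ reducible), and does not spell out why irreducibility of a single $\beta_k$ forces irreducibility of $\alpha=\Id_m[\Id,\ldots,\beta_k,\ldots,\Id]$; you supply exactly that missing step by observing that any decomposition of $\Inv\alpha$ is trapped inside $\Delta^+_{U_k}$ and hence transports via $\Psi_k$ to a decomposition of $\Inv{\beta_k}$.
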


\begin{proof}
If $\alpha=\sigma[\beta_1,\ldots, \beta_m]$ then it follows immediately from Lemma~\ref{easy lemma} that
$$\Inv\alpha = \Inv{\sigma[\Id_{z_1},\ldots, \Id_{z_m}]} \sqcup \textstyle \bigsqcup_i \displaystyle
\Inv{\Id_m[\Id_{z_1},\ldots, \Id_{z_{i-1}}, \beta_i,\Id_{z_{i+1}},\ldots, \Id_{z_m}]},$$
where $z_i=|U_i|$ for $i=1$,\ldots, $m$.
If $\alpha$ is irreducible all then all but one of the inversion sets in the decomposition on the right are empty,
and hence all but one of the corresponding elements are the identity.
Conversely, if more than one of the inversion sets in this decomposition of $\Inv{\alpha}$ is non-empty then
 we have a non-trivial decomposition of $\alpha$.  Furthermore, if $\sigma$ is reducible then Lemma~\ref{inflate inversion set lemma} shows that $\alpha$ is reducible too.
Similarly if some $\beta_i$ is reducible then the order preserving bijection $\Phi_i$ from Lemma~\ref{easy lemma}
induces a decomposition of $\Inv{\Id_m[\Id_{z_1},\ldots, \Id_{z_{i-1}}, \beta_i,\Id_{z_{i+1}},\ldots, \Id_{z_m}]}$, showing again that $\alpha$ is reducible.
\end{proof}

\begin{definition}
Let $\alpha \in S_n$, and $\F,\F' \subset \{1,2,\dots,n\}$. We say that $\F$ and $\F'$ are $\alpha$-{\em connected} if
\begin{itemize}
\item[(i)] $\theta_\F(\alpha)$ and $\theta_{\F'}(\alpha)$ are irreducible;
\item[(ii)] $\Inv\alpha \cap \Delta^+_\F \cap \Delta^+_{\F'} \neq \emptyset$.
\end{itemize}
\end{definition}

The following two results will be used several times.

\begin{lemma}\label{technical}
Let $\alpha \in S_n$ and $\F,\F' \subset \{1,2,\dots,n\}$.
Assume that $\mu = \theta_\F(\alpha) \neq \Id$ and $\mu' = \theta_{\F'}(\alpha) \neq \Id$.
Suppose that $\Inv\alpha = \Inv{\alpha_1} \sqcup \Inv{\alpha_2} \sqcup \dots \sqcup \Inv{\alpha_r}$.
\begin{enumerate}
\item If $\mu$ is irreducible then there exists a unique index $\delta(\F)$ with $1 \leq \delta(\F) \leq r$ such that
$\Inv\alpha \cap \Delta^+_\F \subseteq \Inv{\alpha_{\delta(\F)}}$, and hence
$\Inv\alpha \cap \Delta^+_\F \cap \Inv{\alpha_{i}}=\emptyset$ for all $i\neq \delta(\F)$. \label{one}
\item If $\F$ and $\F'$ are $\alpha$-connected
then $\delta(\F)=\delta(\F')$. \label{two}
\end{enumerate}
\end{lemma}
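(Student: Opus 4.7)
The plan is to transport the given decomposition of $\Inv\alpha$ to a decomposition of $\Inv\mu$ via the restriction map $\theta_\F$, then exploit the irreducibility of $\mu$ to force that decomposition to be trivial. Part (2) will then follow from the disjointness of the original decomposition by picking a witness in $\Inv\alpha \cap \Delta^+_\F \cap \Delta^+_{\F'}$.

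For part (\ref{one}), let $\Psi \colon \F \to \{1,2,\ldots,m\}$ be the order-preserving bijection, and note that $\Psi$ induces a bijection $\Delta^+_\F \to \Delta^+_m$ which, by the very definition of $\theta_\F$, sends $\Inv{\gamma} \cap \Delta^+_\F$ to $\Inv{\theta_\F(\gamma)}$ for any $\gamma \in S_n$. Intersecting both sides of $\Inv\alpha = \Inv{\alpha_1} \sqcup \cdots \sqcup \Inv{\alpha_r}$ with $\Delta^+_\F$ preserves the disjoint-union structure, so transporting along $\Psi$ yields
$$\Inv{\mu} = \Inv{\theta_\F(\alpha_1)} \sqcup \Inv{\theta_\F(\alpha_2)} \sqcup \cdots \sqcup \Inv{\theta_\F(\alpha_r)}.$$
This is a decomposition of the inversion set of $\mu$. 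Since $\mu$ is irreducible and $\mu \neq \Id$, the decomposition must be trivial: there is a unique index $\delta(\F)$ with $\theta_\F(\alpha_{\delta(\F)}) = \mu$, while $\theta_\F(\alpha_i) = \Id_m$ for every $i \neq \delta(\F)$. Pulling back via $\Psi$, this means $\Inv{\alpha_i} \cap \Delta^+_\F = \emptyset$ for $i \neq \delta(\F)$, hence $\Inv\alpha \cap \Delta^+_\F \subseteq \Inv{\alpha_{\delta(\F)}}$. Uniqueness of $\delta(\F)$ is automatic from the disjointness of the decomposition together with the fact that $\Inv\alpha \cap \Delta^+_\F \neq \emptyset$ (a consequence of $\mu \neq \Id$).

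For part (\ref{two}), the hypothesis that $\F$ and $\F'$ are $\alpha$-connected provides some root $(i,j) \in \Inv\alpha \cap \Delta^+_\F \cap \Delta^+_{\F'}$. Applying part (\ref{one}) to each of $\F$ and $\F'$, we conclude that $(i,j) \in \Inv{\alpha_{\delta(\F)}}$ and $(i,j) \in \Inv{\alpha_{\delta(\F')}}$. The disjointness of $\Inv{\alpha_1}, \ldots, \Inv{\alpha_r}$ then forces $\delta(\F) = \delta(\F')$.

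There is no serious obstacle here: the argument is essentially bookkeeping, with the one substantive point being that restriction along $\theta_\F$ carries the given decomposition of $\Inv\alpha$ to a genuine decomposition of $\Inv\mu$. Once that is in hand, irreducibility of $\mu$ does all the work.
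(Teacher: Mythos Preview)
Your proof is correct and essentially identical to the paper's own argument: both transport the decomposition through the order-preserving bijection $\Psi$ to obtain a decomposition of $\Inv\mu$, invoke irreducibility of $\mu$ to force triviality, and then for part~(\ref{two}) use a common root in $\Inv\alpha\cap\Delta^+_\F\cap\Delta^+_{\F'}$ together with disjointness.
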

\begin{proof}
Let $m=|\F|$ and suppose that $\mu$ is irreducible.     Put $\mu_a = \theta_{\F}(\alpha_a)$ for $a=1,2,\dots,r$.
There exists an order preserving bijection $\Psi : \F \to \{1,2,\dots,m\}$.
It is easy to see that $(i,j) \in \Inv\alpha \cap \Delta^+_\F$ if and only if $(\Psi(i),\Psi(j)) \in \Inv{\theta_\F(\alpha)}$.
Thus $\Psi$ identifies $\Inv\alpha \cap \Delta^+_\F$ with $\Inv{\theta_\F(\alpha)}$.
Intersecting $\Inv\alpha= \Inv{\alpha_1} \sqcup \Inv{\alpha_2} \sqcup \dots \sqcup \Inv{\alpha_r}$ with $\Delta^+_\F$ and
using this identification we get
$\Inv\mu = \Inv{\mu_1} \sqcup \Inv{\mu_2} \sqcup \dots \sqcup \Inv{\mu_r}$.
Since $\mu$ is irreducible, there exists a unique $\delta(\F)$ such that $\Inv\mu = \Inv{\mu_{\delta(\F)}}$,
furthermore $\Inv{\mu_i}=\emptyset$ for all $i\neq \delta(\F)$.
Therefore $\Inv{\alpha} \cap \Delta^+_{\F} \subseteq \Inv{\alpha_{\delta(\F)}}$ and
$\Inv\alpha \cap \Delta^+_\F \cap\Inv{\alpha_{i}}=\emptyset$ for all $i\neq \delta(\F)$.

For the second assertion, recall that $\mu$ and $\mu'$ are irreducible by definition.
By the above, $\Inv{\alpha} \cap \Delta^+_{\F} \subseteq \Inv{\alpha_{\delta(\F)}}$ and
$\Inv{\alpha} \cap \Delta^+_{\F'} \subseteq \Inv{\alpha_{\delta(\F')}}$.
Since $\F$ and $\F'$ are $\alpha$-connected there exists $(i,j) \in \Inv\alpha \cap \Delta^+_\F \cap \Delta^+_{\F'}$.
Thus $(i,j) \in  \Inv{\alpha_{\delta(\F)}} \cap  \Inv{\alpha_{\delta(\F')}}$.
Hence $\delta(\F)=\delta(\F')$.
\end{proof}

\begin{corollary}\label{technical cor}
Suppose $\F_1,\F_2,\dots,\F_s \subset \{1,2,\dots,n\}$ where
$\F_i$ and $\F_{i+1}$ are $\alpha$-connected for all $1\leq i\leq s-1$.
(In particular, $\theta_{\F_i}(\alpha)$ is irreducible for all $i=1,2,\dots,s$.) Assume further that
$\Inv{\alpha} \subseteq \cup_{i=1}^s \Delta_{\F_i}^+$.
Then $\alpha$ is irreducible.
\end{corollary}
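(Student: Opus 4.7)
The plan is to prove this by taking an arbitrary decomposition $\Inv{\alpha} = \Inv{\alpha_1} \sqcup \cdots \sqcup \Inv{\alpha_r}$ and showing it must be trivial, i.e., that there is a single index $k$ with $\Inv{\alpha_k} = \Inv{\alpha}$ (and all other $\alpha_i = \Id$). The tool is Lemma \ref{technical}, which has already done most of the heavy lifting: it assigns to each $\F_i$ a well-defined index $\delta(\F_i)$ capturing which member of the decomposition absorbs the $\F_i$-piece of $\Inv{\alpha}$, and it shows this index is stable along $\alpha$-connected pairs. What remains is to chain these facts together.

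First, I would dispose of the trivial case $\Inv{\alpha} = \emptyset$ (so $\alpha = \Id$, which is irreducible by definition). In the nontrivial case, I would note that the hypothesis that $\F_i$ and $\F_{i+1}$ are $\alpha$-connected forces $\Inv{\alpha} \cap \Delta^+_{\F_i} \cap \Delta^+_{\F_{i+1}} \neq \emptyset$, so in particular $\theta_{\F_i}(\alpha) \neq \Id$ for every $i$ appearing in at least one connected pair; combined with the containment $\Inv{\alpha} \subseteq \bigcup_i \Delta^+_{\F_i}$, at least one $\F_i$ carries a genuine inversion, and the corresponding $\theta_{\F_i}(\alpha)$ is non-identity and irreducible.

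Next, I would apply Lemma \ref{technical}(1) to each $\F_i$ with $\theta_{\F_i}(\alpha) \neq \Id$, producing a unique index $\delta(\F_i) \in \{1,\ldots,r\}$ with
\[
\Inv{\alpha} \cap \Delta^+_{\F_i} \subseteq \Inv{\alpha_{\delta(\F_i)}}.
\]
Then Lemma \ref{technical}(2) gives $\delta(\F_i) = \delta(\F_{i+1})$ whenever both sides are defined. Iterating along the chain $\F_1,\F_2,\ldots,\F_s$, I would deduce that a single index $k := \delta(\F_1) = \cdots = \delta(\F_s)$ governs every piece. (Any $\F_j$ with $\theta_{\F_j}(\alpha) = \Id$ contributes $\Inv{\alpha} \cap \Delta^+_{\F_j} = \emptyset$ and so can safely be ignored.)

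Finally, using $\Inv{\alpha} \subseteq \bigcup_i \Delta^+_{\F_i}$, I would conclude
\[
\Inv{\alpha} \;=\; \bigcup_{i=1}^{s} \bigl(\Inv{\alpha}\cap \Delta^+_{\F_i}\bigr) \;\subseteq\; \Inv{\alpha_k} \;\subseteq\; \Inv{\alpha},
\]
so $\Inv{\alpha_k} = \Inv{\alpha}$ and $\Inv{\alpha_i} = \emptyset$ for all $i \neq k$, showing the decomposition is trivial. Since the chosen decomposition was arbitrary, $\alpha$ is irreducible. I do not expect any serious obstacle here: the result is essentially a packaging of Lemma \ref{technical} with a transitivity argument along the chain, and the only delicate points are the trivial case $\Inv{\alpha}=\emptyset$ and keeping track of which $\F_i$ may have $\theta_{\F_i}(\alpha)=\Id$.
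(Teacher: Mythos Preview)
Your proposal is correct and follows essentially the same approach as the paper's proof: take an arbitrary decomposition, use Lemma~\ref{technical}(1) to define $\delta(\F_i)$, use Lemma~\ref{technical}(2) along the chain to show all $\delta(\F_i)$ coincide, and then use the covering hypothesis $\Inv{\alpha}\subseteq\bigcup_i\Delta^+_{\F_i}$ to conclude the decomposition is trivial. You are somewhat more careful than the paper about the edge cases (the possibility $\Inv{\alpha}=\emptyset$ and the hypothesis $\theta_{\F_i}(\alpha)\neq\Id$ needed to invoke Lemma~\ref{technical}), but note that when $s\geq 2$ the $\alpha$-connectedness hypothesis already forces $\theta_{\F_i}(\alpha)\neq\Id$ for every $i$, since condition~(ii) in the definition requires $\Inv{\alpha}\cap\Delta^+_{\F_i}\neq\emptyset$.
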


\begin{proof}
Suppose that $\Inv{\alpha} = \Inv{\alpha_1} \sqcup \Inv{\alpha_2} \sqcup \dots \sqcup \Inv{\alpha_r}$.
By Lemma~\ref{technical}, we have $j = \delta(\F_1) = \delta(\F_2) = \dots = \delta(\F_r)$
with $\Inv{\alpha} \cap \Delta_{\F_i}^+ \subseteq \Inv{\alpha_j}$.
Therefore $\Inv{\alpha} = \Inv{\alpha} \cap (\cup_{i=1}^s  \Delta_{\F_i}^+)
= \cup_{i=1}^s (\Inv{\alpha} \cap \Delta_{\F_i}^+) \subset \Inv{\alpha_j}$ and thus the
decomposition $\Inv{\alpha}= \Inv{\alpha_1} \sqcup \Inv{\alpha_2} \sqcup \dots \sqcup \Inv{\alpha_r}$ is trivial.
\end{proof}


The basic objects for describing a permutation by inflation are the simple permutations, while
in describing decompositions the basic permutations are the irreducible ones.
In our recursive method of describing decompositions by inflations
it is therefore natural to choose the basic object to be those permutations which are both simple and irreducible.

It turns out that simple permutations are automatically irreducible (and thus our basic building blocks are again
the simple permutations).  To prove this and a related useful fact
we need an additional definition.
Recall that a permuation $\alpha\in S_n$ is simple if it has no blocks of length $t$ for $2\leq t\leq n-1$.

\begin{definition}
A permutation $\alpha \in S_n$ is {\it two-block simple} if $\alpha(1) \neq 1$,
$\alpha(i+1) \neq \alpha(i) + 1$ for $1 \leq i \leq n-1$, and $\alpha(n) \neq n$.
\end{definition}

The name is somewhat inaccurate: the condition that $\alpha$ has no blocks of length two is that $\alpha(i+1) \neq \alpha(i) \pm 1$ for all $i$,
whereas we are only asking that $\alpha(i+1) \neq \alpha(i)+1$ for all $i$, and imposing the additional conditions
that $\alpha(1)\neq 1$ and $\alpha(n)\neq n$.
Nonetheless, we continue to use this name since it gives an indication of the defining conditions.
In Proposition~\ref{simple implies irreducible} below we will show that the property of being
simple is equivalent to the property of being both irreducible and two-block simple.
From this equivalence and
Lemma~\ref{complements are simple} we will deduce that if $\alpha$ is simple then $J\alpha$ is irreducible.

Our method of proving the equivalence is inductive.  The base cases of the induction are a particular family
of permutations previously appearing in the literature.

\begin{definition} (\cite[Definition 4]{AA})
Let $n=2m$ be even with $m \geq 2$.
A permutation $\alpha\in S_n$ is {\it exceptional} if it is one of the following permutations
\begin{enumerate}
\item $\alpha = (2,4,6,\dots,2m-2,2m,1,3,5,\dots,2m-3,2m-1)$,
\item $\alpha=(m+1,1,   m+2,   2,   m+3,  3,\dots,  2m-1, m-1, 2m,  m)$,
\item $\alpha= (2m-1,2m-3,2m-5,\dots,3,1,2m,2m-2,2m-4,\dots,4,2)$,
\item $\alpha =(m,  2m, m-1, 2m-1,m-2,2m-2,\dots,2,     m+2,   1,  m+1)$.
\end{enumerate}
\end{definition}

\begin{lemma}\label{exceptional}
Let $\alpha \in S_n$ be exceptional.  Then $\alpha$ is irreducible and two-block simple.
\end{lemma}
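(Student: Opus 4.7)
The plan is to establish the two claims separately. \emph{Two-block simplicity} is a direct verification: in each of the four families, one reads off that $\alpha(1)\neq 1$ and $\alpha(n)\neq n$ (using $m\geq 2$), and inspects consecutive differences $\alpha(i+1)-\alpha(i)$ to confirm none equals $+1$. Within each half (or each parity class) these differences are $\pm 2$, $\pm m$, or $\pm(m+1)$, and the one ``boundary'' difference is a large negative jump, so the condition is immediate.

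For \emph{irreducibility} I would apply Corollary~\ref{technical cor}, choosing each subset $\F_i$ to be of size $3$. A preliminary observation is needed: in $S_3$, Corollary~\ref{when is Jm reducible} says that $\wo[3]$ is reducible, and a direct check via Proposition~\ref{proposition 11} shows that each of $(2,1,3)$, $(1,3,2)$, $(3,1,2)$, $(2,3,1)$ is irreducible (the only singleton subsets of their inversion sets fail either the closed or the co-closed condition). Consequently, for a $3$-subset $\F\subset\{1,\ldots,n\}$, $\theta_\F(\alpha)$ is irreducible if and only if $\alpha|_\F$ is not strictly decreasing. The task is thus to exhibit, in each family, a chain of triples $\F_1,\ldots,\F_s$ such that $\alpha|_{\F_i}$ is never strictly decreasing, consecutive $\F_i$ and $\F_{i+1}$ share a common inversion of $\alpha$ (so they are $\alpha$-connected), and every inversion of $\alpha$ lies in some $\Delta^+_{\F_i}$.

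For families (1) and (2) this is straightforward: in (1), $\alpha$ is a concatenation of two increasing sequences, and in (2) the odd-position values and the even-position values each form an increasing sequence, so in neither case does $\alpha$ admit a strictly decreasing subsequence of length $3$. Every triple therefore gives an irreducible restriction, and one may freely choose convenient chains. For example, in family (1) the inversions $(i,m+k)$ with $k\leq i$ decompose into ``rows'' $R_i=\{(i,m+1),\ldots,(i,m+i)\}$; I would chain $R_i$ internally by triples $\{i,m+k,m+k+1\}$ and bridge $R_i$ to $R_{i+1}$ via the triple $\{i,i+1,m+i\}$, which shares $(i,m+i)$ with $R_i$ and $(i+1,m+i)$ with $R_{i+1}$. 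Family (2) is handled analogously using its odd/even structure.

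For families (3) and (4) the restriction condition has teeth: in (3), every triple contained entirely in $\{1,\ldots,m\}$ or entirely in $\{m+1,\ldots,2m\}$ restricts to $\wo[3]$, and in (4) the same is true of any triple contained in the odd-position set or the even-position set. One must therefore use mixed triples that contain at least two inversions of $\alpha$ and whose restriction is not strictly decreasing; a case analysis — for instance, for (3) the triples $\{i,i',m+k\}$ with $i<k\leq i'$, which restrict to $(3,1,2)$ and contain the inversions $(i,i')$ and $(i,m+k)$ — yields enough such triples to weave a chain covering all of $\Inv(\alpha)$. The main obstacle is precisely this delicate case analysis for (3) and (4): checking that the permitted mixed triples suffice to reach every inversion while keeping each consecutive pair $\alpha$-connected. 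Once this is established, Corollary~\ref{technical cor} immediately yields the irreducibility of $\alpha$.
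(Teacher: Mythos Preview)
Your plan is viable and genuinely different from the paper's argument.  Two-block simplicity is handled the same way (by inspection), but for irreducibility the paper proceeds differently on every family.

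For family (1) the paper does not use Corollary~\ref{technical cor} at all: it simply observes that $\Inv\alpha$ contains exactly one simple root, namely $(m,m+1)$.  Since by Lemma~\ref{simple root lemma} every nonempty inversion set contains a simple root, a nontrivial decomposition $\Inv\alpha=\Inv{\alpha_1}\sqcup\Inv{\alpha_2}$ would force two distinct simple roots into $\Inv\alpha$.  This one-line argument is shorter than your triple-chaining, though your ``no decreasing triple'' observation is a pleasant structural fact.

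For families (2), (3), (4) the paper also invokes Corollary~\ref{technical cor}, but with \emph{large} subsets $\F_i$ rather than triples: typically $\F_i=\{1,\ldots,2m\}$ minus two carefully chosen elements, together with one or two four-element sets.  The point is that $\theta_{\F_i}(\alpha)$ is then an exceptional permutation of the same family in $S_{2m-2}$, so irreducibility of the restrictions follows by induction on $m$, with the base cases $m=2$ (and $m=3$ for (3) and (4)) checked directly.  Only three or four such $\F_i$ are needed per family, and the required shared inversions are written down explicitly.  This inductive reduction makes (3) and (4) no harder than (2), completely sidestepping the ``delicate case analysis'' of mixed triples that you correctly identify as the bottleneck of your approach.

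In short: your method is more elementary in that it avoids induction and works uniformly with size-$3$ subsets, and it goes through cleanly for (1) and (2).  The paper's method trades this uniformity for a much shorter verification in (3) and (4), where the inductive structure of the exceptional families does the work that your chain of mixed triples would otherwise have to do by hand.
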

\begin{proof}
It is easily seen that all of these permutations are two-block simple.

(1) Suppose $\alpha = (2,4,6,\dots,2m-2,2m,1,3,5,\dots,2m-3,2m-1)$.
Then $\Inv\alpha$ has only one simple root, $(m,m+1)$, and so is irreducible.

For the remaining cases, we proceed by induction using Corollary~\ref{technical cor} repeatedly.

(2) Suppose $\alpha=(m+1,1,   m+2,   2,   m+3,  3,\dots,  2m-1, m-1, 2m,  m)$.  For $m=2$ we check directly that $\alpha = (3,1,4,2)$
is irreducible. Let $m \geq 3$ and set
$$
\begin{array} {rclcl}
\F_1& := & \{1,2,\dots,2m-2\} & = &  \{1,2,\dots,2m\} \backslash \{2m-1, 2m\}\\
\F_2& := &\{3,4,\dots,2m\} & = &  \{1,2,\dots,2m\} \backslash \{1,2\}\\
\F_3 & : = & \{1,2,2m-1,2m\}.&&
\end{array}
$$
Then $\theta_{\F_1}(\alpha) = \theta_{\F_2}(\alpha) =  (m,1,   m+1,   2,   m+2,  3,\dots,  2m-3,  m-2, 2m-2, m-1)$
is irreducible by the induction assumption and $\theta_{\F_3}(\alpha) = (3,1,4,2)$ is irreducible by the base case.
Furthermore, $(3,4) \in \Inv{\alpha} \cap \Delta_{\F_1}^+ \cap \Delta_{\F_2}^+$ and
$(2m-1,2m) \in \Inv{\alpha} \cap \Delta_{\F_2}^+ \cap \Delta_{\F_3}^+$ together with  the
observation that $\Inv{\alpha} \subset \Delta^+ = \Delta_{\F_1}^+ \cup \Delta_{\F_2}^+ \cup \Delta_{\F_3}^+$ imply that Corollary~\ref{technical cor}
applies and hence     $\alpha$ is irreducible.

(3) Suppose $\alpha= (2m-1,2m-3,2m-5,\dots,3,1,2m,2m-2,2m-4,\dots,4,2)$.  For $m = 2$, $\alpha = (3,1,4,2)$,
as discussed above, is irreducible. It is not too difficult to check directly that, for $m = 3$,
$\alpha = (5,3,1,6,4,2)$ is irreducible as well. Let $m \geq 4$ and set
$$
\begin{array} {rclcl}
\F_1& := & \{1,2,\dots, m-1, m+1,\ldots, 2m-1\} & = &  \{1,2,\dots,2m\} \backslash \{m, 2m\}\\
\F_2& := &\{2,3,\dots,m, m+2, \ldots,2m\} & = &  \{1,2,\dots,2m\} \backslash \{1,m+1\}\\
\F_3 & : = & \{2,m, m+1,2m\}&&\\
\F_4 & := & \{1, m, m+1, 2m\}.&&
\end{array}
$$
Then $\theta_{\F_1}(\alpha) = \theta_{\F_2}(\alpha) =  (2m-3, 2m-5, \ldots, 3,1, 2m-2, 2m-4, \ldots, 4, 2)$
is irreducible by the induction assumption and $\theta_{\F_3}(\alpha) = \theta_{\F_4}(\alpha) = (3,1,4,2)$ is irreducible by the base case.
Furthermore, $(2, m+3) \in \Inv{\alpha} \cap \Delta_{\F_1}^+ \cap \Delta_{\F_2}^+$,
$(2, 2m) \in \Inv{\alpha} \cap \Delta_{\F_2}^+ \cap \Delta_{\F_3}^+$, and
$(m+1,2m) \in \Inv{\alpha} \cap \Delta_{\F_3}^+ \cap \Delta_{\F_4}^+$ together with the
observation that $\Inv{\alpha} \subset \Delta^+ = \Delta_{\F_1}^+ \cup \Delta_{\F_2}^+ \cup \Delta_{\F_3}^+  \cup \Delta_{\F_4}^+$ imply that
Corollary~\ref{technical cor}
applies and hence     $\alpha$ is irreducible.

(4) Finally suppose that  $\alpha = (m,  2m, m-1, 2m-1,m-2,2m-2,\dots,2,m+2,1, m+1)$.   For $m = 2$ we check directly
that  $\alpha = (2,4,1,3)$ is irreducible. It is not too difficult to check directly that, for $m = 3$,
$\alpha = (3,6,2,5,1,4)$ is irreducible as well. Let $m \geq 4$ and set
$$
\begin{array} {rclcl}
\F_1& := & \{1,2,\dots, 2m-3, 2m-2\} & = &  \{1,2,\dots,2m\} \backslash \{2m-1, 2m\}\\
\F_2& := &\{1,2, \ldots,2m-3, 2m\} & = &  \{1,2,\dots,2m\} \backslash \{2m-2,2m-1\}\\
\F_2& := &\{3,4, \ldots,2m-1, 2m\} & = &  \{1,2,\dots,2m\} \backslash \{1,2\}.
\end{array}
$$
Then $\theta_{\F_1}(\alpha) = \theta_{\F_2}(\alpha) = \theta_{\F_3}(\alpha)=  (m-1, 2m-2, m-2, 2m-3, \ldots, 2, m+1, 1, m)$
is irreducible by the induction assumption.
Furthermore, $(2, 3) \in \Inv{\alpha} \cap \Delta_{\F_1}^+ \cap \Delta_{\F_2}^+$ and
$(3, 2m-3) \in \Inv{\alpha} \cap \Delta_{\F_2}^+ \cap \Delta_{\F_3}^+$,  together with the
observation that $\Inv{\alpha} \subset \Delta^+ = \Delta_{\F_1}^+ \cup \Delta_{\F_2}^+ \cup \Delta_{\F_3}^+$ imply that Corollary~\ref{technical cor}
applies and hence     $\alpha$ is irreducible.
\end{proof}

We now turn to the reduction step and then the inductive proof of Proposition~\ref{simple implies irreducible}.

\begin{definition}
Let $\alpha \in S_n$. Choose $k$ with $1 \leq k \leq n$ and put $\F = \{1,2,\dots,n\} \setminus \{k\}$.
The permutation $\alpha^\circ=\theta_\F(\alpha) \in S_{n-1}$ is called a {\it one point deletion} of $\alpha$.
We say that $\alpha^\circ$ is obtained from $\alpha$ by deleting $(k,\alpha(k))$.
\end{definition}

The following theorem, expressed in the language of posets, was first proved by Schmerl and Trotter \cite{ST}.
The version below in terms of permuatations appears as \cite[Theorem~5]{AA}.

\begin{theorem}\label{Schmerl-Trotter theorem}
Let $n \geq 2$ and suppose $\alpha \in S_n$ is simple but not exceptional.
Then $\alpha$ has a one point deletion $\alpha^\circ$ which is simple.
\end{theorem}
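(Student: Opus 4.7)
The plan is to prove the contrapositive: assume $\alpha\in S_n$ is simple and every one-point deletion of $\alpha$ is non-simple, and deduce that $\alpha$ must be exceptional. For each $k\in\{1,\ldots,n\}$, since $\alpha^\circ_k := \theta_{\{1,\ldots,n\}\setminus\{k\}}(\alpha)$ fails to be simple, it has a nontrivial block. Pulling this block back through the order-preserving bijection $\{1,\ldots,n\}\setminus\{k\}\to\{1,\ldots,n-1\}$, we obtain an interval $I_k\subseteq\{1,\ldots,n\}$ with $k\in I_k$, $|I_k|\ge 3$, and such that $\alpha(I_k\setminus\{k\})$ is an interval of $\{1,\ldots,n\}$. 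I will call such an $I_k$ a \emph{defective interval at} $k$. Because $\alpha$ itself is simple, $I_k$ is never a block of $\alpha$; consequently $\alpha(k)$ lies outside the interval $\alpha(I_k\setminus\{k\})$, and a short argument (sliding $\alpha(k)$ to the boundary) shows that $k$ must be the first or the last index of $I_k$ — otherwise one could produce a genuine block of $\alpha$ of size $|I_k|-1$ or $|I_k|$.

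Next, for each $k$ I would select a \emph{minimal} defective interval $I_k$. The crux is a structural lemma: two minimal defective intervals $I_k,I_{k'}$ ($k\ne k'$) must interact in a very restricted way. Using the closed/co-closed description of inversion sets from Proposition~\ref{proposition 11} together with the fact that $\alpha$ has no blocks of intermediate size, I would show that $I_k\cup I_{k'}$ cannot be a block of $\alpha$, and neither can $I_k\cap I_{k'}$ when it is large enough. This constrains the overlap pattern: either $I_k$ and $I_{k'}$ are disjoint, or they nest, or they meet in a single endpoint of one inside the other.

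With these constraints established, the argument divides by the minimal size of the defective intervals. If some minimal defective interval has size $\geq 4$, I would combine it with the constraints from nearby $I_{k'}$'s to produce a genuine block of $\alpha$, contradicting simplicity, so this case does not occur. Hence every minimal $I_k$ has size exactly $3$, and consecutive minimal defective intervals share exactly two elements. The sequence $(I_1,\ldots,I_n)$ then organizes into a ``chain'' in which $\alpha$ is forced into a very rigid zig-zag: at each step $\alpha(k+1)-\alpha(k)$ is determined up to a small number of possibilities, and the boundary conditions (no block of size two, and the location of $\alpha(1)$ and $\alpha(n)$ relative to $1$ and $n$) pin these possibilities down. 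Carrying out this bookkeeping — splitting on the parity of $n$ and on whether the chain starts with an ``up'' or ``down'' step — leaves exactly the four families $(1)$–$(4)$ in the definition of exceptional.

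The main obstacle is the final classification: showing that the rigid zig-zag constraint really collapses to only the four listed permutations requires careful case analysis of how the minimal defective intervals can be ``glued'' along their two-element overlaps, together with book-keeping at the boundary indices $1$ and $n$. This is where the bulk of the work in \cite{ST} and \cite{AA} is done, and where the four exceptional families arise as the only surviving configurations.
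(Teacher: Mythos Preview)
The paper does not prove this theorem. Theorem~\ref{Schmerl-Trotter theorem} is stated with attribution to Schmerl--Trotter \cite{ST} (in the language of posets) and to Albert--Atkinson \cite[Theorem~5]{AA} (in the permutation language), and is then used as a black box in the inductive proof of Proposition~\ref{simple implies irreducible}. There is therefore no proof in the paper to compare your proposal against.

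Your sketch follows the general contrapositive strategy of the original references: assume every one-point deletion is non-simple, extract a ``defective'' interval at each position, and analyse how these intervals overlap to force $\alpha$ into one of the four exceptional families. That is indeed the shape of the argument in \cite{ST} and \cite{AA}, and you correctly identify that the hard work lies in the final rigidity/classification step. A few of your intermediate claims are stated more cleanly than they actually are --- for instance, the assertion that $k$ must be an endpoint of the minimal defective interval $I_k$ is not quite right as written (in the actual argument the relevant dichotomy is about whether $\alpha(k)$ is adjacent to the image interval, not about the position of $k$ itself), and the overlap analysis of two minimal defective intervals is more delicate than ``disjoint, nested, or share an endpoint''. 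But since the paper simply imports the result, these are issues with reproducing \cite{ST}/\cite{AA}, not discrepancies with anything in the present paper.
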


\begin{lemma}\label{one-point-deletion-irreducible-means-not-simple}
Suppose that $\alpha\in S_n$ is reducible and has a one-point deletion which is irreducible.
Then $\alpha$ is not simple.
\end{lemma}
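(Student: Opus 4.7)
The plan is to use the restriction map to control the structure of the decomposition and then extract a block of $\alpha$ directly. Since $\alpha$ is reducible, Corollary~\ref{coalesce alpha decomposition} lets me take a decomposition $\Inv{\alpha} = \Inv{\alpha_1} \sqcup \Inv{\alpha_2}$ with both pieces non-empty. Let $k$ be such that $\alpha^\circ := \theta_{\F}(\alpha)$ is irreducible, where $\F = \{1,\ldots,n\}\setminus\{k\}$. Restricting via $\theta_{\F}$ (as in the proof of Lemma~\ref{technical}) gives $\Inv{\alpha^\circ} = \Inv{\theta_{\F}(\alpha_1)} \sqcup \Inv{\theta_{\F}(\alpha_2)}$; irreducibility of $\alpha^\circ$ forces one summand to be empty, and after relabelling I may assume $\theta_{\F}(\alpha_2) = \Id$. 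In other words, every inversion of $\alpha_2$ involves the position $k$.

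Next I would describe $\alpha_2$ explicitly. Since $\alpha_2$ preserves the relative order on $\F$, the sequence $\alpha_2(1),\ldots,\alpha_2(k-1), \alpha_2(k+1), \ldots, \alpha_2(n)$ is strictly increasing and exhausts $\{1,\ldots,n\}\setminus\{\ell\}$, where $\ell := \alpha_2(k)$. The hypothesis $\Inv{\alpha_2} \neq \emptyset$ gives $\ell \neq k$; I assume $\ell > k$, the case $\ell < k$ being symmetric. Direct inspection then yields $\Inv{\alpha_2} = \{(k,k+1),(k,k+2),\ldots,(k,\ell)\}$, and in particular $\alpha(k) > \alpha(j)$ for each $j$ with $k<j\leq \ell$, since these pairs lie in $\Inv{\alpha_2} \subseteq \Inv{\alpha}$.

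The core of the argument, and the main obstacle, is to show that $\{k,k+1,\ldots,\ell\}$ is a block of $\alpha$, i.e., that $\{\alpha(k),\alpha(k+1),\ldots,\alpha(\ell)\}$ is the interval $\{M-(\ell-k),\ldots,M\}$ where $M=\alpha(k)$. If not, pigeonhole produces some $j_0 \in \{k+1,\ldots,\ell\}$ with $\alpha(j_0) < M-(\ell-k)$ and some $i_0 \notin \{k,\ldots,\ell\}$ with $M-(\ell-k)\leq \alpha(i_0)\leq M-1$. I would derive a contradiction from the co-closedness of $\Inv{\alpha_1}$ applied to the triple $(i_0,k,j_0)$ when $i_0<k$, and to the triple $(k,j_0,i_0)$ when $i_0>\ell$. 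In each case two of the three relevant pairs are forced outside $\Inv{\alpha_1}$: one because it lies in $\Inv{\alpha_2}$ and the decomposition is disjoint, the other because its $\alpha$-values place it outside $\Inv{\alpha}$ altogether. Co-closedness then predicts that the third pair also lies outside $\Inv{\alpha_1}$; but this third pair does not involve $k$ (hence does not lie in $\Inv{\alpha_2}$) and its $\alpha$-values force it into $\Inv{\alpha}$, so it must lie in $\Inv{\alpha_1}$, contradiction.

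Finally, the block $\{k,\ldots,\ell\}$ has size $\ell-k+1 \geq 2$, which is strictly less than $n$ except in the boundary case $(k,\ell)=(1,n)$. In that boundary case the inequalities $\alpha(1)>\alpha(j)$ for all $j>1$ force $\alpha(1)=n$, so $\{\alpha(2),\ldots,\alpha(n)\} = \{1,\ldots,n-1\}$ and $\{2,\ldots,n\}$ is itself a block of $\alpha$ of size $n-1 \geq 2$ (note $n \geq 3$, since $S_2$ contains no reducible element). In every case, $\alpha$ has a block of size $t$ with $2 \leq t \leq n-1$, so $\alpha$ is not simple.
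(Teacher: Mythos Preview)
Your argument is correct and follows essentially the same route as the paper's proof: restrict to $\F=\{1,\ldots,n\}\setminus\{k\}$ to confine $\Inv{\alpha_2}$ to roots through $k$, identify $\Inv{\alpha_2}$ as a consecutive run, and then use a triple-based closed/co-closed argument to show the corresponding interval is a block of $\alpha$ (the paper phrases this last step via restriction to $3$-element sets and irreducibility of $(2,3,1)$ and $(3,1,2)$, which amounts to the same thing).  One small slip: in the case $i_0>\ell$ with triple $(k,j_0,i_0)$, the ``third pair'' produced by co-closedness is $(k,i_0)$, which \emph{does} involve $k$; your parenthetical ``does not involve $k$'' is wrong there, but the conclusion $(k,i_0)\notin\Inv{\alpha_2}$ still holds for the correct reason that $i_0>\ell$ lies outside the range $\{k+1,\ldots,\ell\}$ of second coordinates in $\Inv{\alpha_2}$.
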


\begin{proof}
Let $\Inv{\alpha}= \Inv{\alpha_1} \sqcup \Inv{\alpha_2}$ be a non-trivial decomposition, and
let $\alpha^\circ$ be an irreducible one-point deletion obtained from $\alpha$ by deleting $(k,\ell)$ with
$\ell=\alpha(k)$.  Since $\alpha^\circ$ is irreducible, applying Lemma~\ref{technical}(\ref{one})
with $\F=\{1,\ldots,  n\}\setminus\{k\}$ gives that either
$\Inv{\alpha}\cap \Delta_{\F}^{+}\subset \Inv{\alpha_1}$ or $\Inv{\alpha}\cap \Delta_{\F}^{+}\subset \Inv{\alpha_2}$ .
By relabeling we may assume that $\Inv{\alpha}\cap \Delta_{\F}^{+}\subset \Inv{\alpha_1}$.
Concretely this means that all roots of the form $(i,j)\in \Inv{\alpha}$ with either $i\neq k$ or $j\neq k$
are in $\Inv{\alpha_1}$. The remaining roots in $\Inv{\alpha}$, those of the form $(i,k)$ or $(k,j)$ may appear
in either $\Inv{\alpha_1}$ or $\Inv{\alpha_2}$, and $\Inv{\alpha_2}$ only has roots of this form.
Furthermore, since the decomposition is assumed non-trivial, there is at least one root in $\Inv{\alpha_2}$.

Write $|\Inv{\alpha_2}| = p+q$ where $p$ of the elements of $\Inv{\alpha_2}$ are of the form
$(i,k)$ with $1 \leq i < k$ and $q$ of the elements of $\Inv{\alpha_2}$ are of the form
$(k,j)$ with $k < j  \leq n$.
Suppose both $p$ and $q$ are non-zero.  Then there exist $i < k$ and $j > k$ with $(i,k), (k,j) \in \Inv{\alpha_2}$.
Since $\Inv{\alpha_2}$ is closed this means that the root $(i,j)\in \Inv{\alpha_2}$ contrary
to the description above.  Thus only one of $p$ and $q$ is non-zero.

Assume first that $p\neq 0$ and $q=0$.  By the form of the roots in $\Inv{\alpha_2}$ the only simple root
in $\Inv{\alpha_2}$ is $(k-1,k)$.
This implies that $\alpha_2$ preserves the relative order of the elements in $\{1,2,\ldots, k-1\}$ and the relative
order of the elements in $\{k,k+1,\ldots, n\}$.  Along with the fact that the only roots in $\Inv{\alpha_2}$ are
of the form $(i,k)$, this implies that $\Inv{\alpha_2}= \{ (k-p,k), (k-p+1,k), \dots, (k-1,k) \}$.

Set $s= \max\{\alpha(k-i) \mid 1\leq i \leq p\} - \ell$, and let
$R$ be the $p\times s$ rectangle $R := [k-p,k] \times [\ell,\ell+s]$.
There are $p+1$ vertical lattice lines and $s+1$ horizontal lattice lines through $R$.
Exactly $p+1$ points of the graph of $\alpha$ lie inside the rectangle $R$:
$(k-p,\alpha(k-p)), (k-p+1,\alpha(k-p+1)), \dots, (k-1,\alpha(k-1))$ and $(k,\ell)$.  Since $\alpha$ is a permutation
(and hence injective), no two points of its graph may lie on the same horizontal line, and thus $s\geq p$.
We will now show that $s\leq p$ and hence $s=p$.



\vspace{3\baselineskip}

\newgray{vlgray}{0.90}

\hfill
\psset{unit=0.375cm}
\begin{pspicture}(1,1)(12,12)
\pspolygon[linestyle=solid,fillstyle=solid,fillcolor=vlgray](5,3)(5,9)(9,9)(9,3)
\multido{\n=1+1.0}{12}{%
\psline[linecolor=gray](\n,1)(\n,12)
\psline[linecolor=gray](1,\n)(12,\n)
}
\rput(8.5,8.5){\tiny $R$}
\psset{boxsep=false,framesep=-0.1pt,linecolor=white,fillcolor=white,fillstyle=solid}
\rput(10.1,2.5){\psframebox{\tiny $(k,\ell)$}}
\rput(7,9.5){\psframebox{\tiny $(z,\ell+s)$}}
\rput(3,5.5){\psframebox{\tiny $(x,\alpha(x))$}}
\rput(10.5,7.5){\psframebox{\tiny $(y,\alpha(y))$}}
\psset{linecolor=black}
\GraphPoint{1}{11}
\GraphPoint{2}{10}
\GraphPoint{3}{6}
\GraphPoint{4}{1}
\GraphPoint{5}{4}
\GraphPoint{6}{7}
\GraphPoint{7}{9}
\GraphPoint{8}{5}
\GraphPoint{9}{3}
\GraphPoint{10}{12}
\GraphPoint{11}{8}
\GraphPoint{12}{2}
\end{pspicture}

\vspace{-5.3cm}

\parshape 13 0cm \textwidth 0cm \textwidth 0cm 10.6cm 0cm 10.6cm 0cm 10.6cm 0cm 10.6cm 0cm 10.6cm 0cm 10.6cm 0cm 10.6cm 0cm 10.6cm 0cm 10.6cm 0cm 10.6cm 0cm \textwidth
We first claim that there are no points of the graph of $\alpha$ strictly to the left of $R$, i.e.,
a point $(x,\alpha(x))$ with $x < k-p$ and $\ell < \alpha(x) < \ell+s$.  Assume to the contrary that $(x, \alpha(x))$ is such a point.
A potential graph of such an $\alpha$ is shown below right (although, as part of the proof we will show,
certain features of the graph are incorrect).
Let $\F = \{x,z,k\}$ with $z := \alpha^{-1}(\ell+s)$.  By this choice of $x$, the slope between $(x,\alpha(x))$
and $(k,\ell)$ is negative, and so $(x,k)$ is a root of $\Inv{\alpha}$.  This root is not contained in
$\Inv{\alpha_2}$ since if $(i,k)\in \Inv{\alpha_2}$ then $(i,\alpha(i))$ is in $R$, and we have chosen
$(x,\alpha(x))$ outside of $R$.  Thus $(x,k)\in \Inv{\alpha_1}$, and hence $\Inv{\theta_{\F}(\alpha_1)}\neq \emptyset$.
On the other hand, the slope between $(z,\ell+s)$ and $(k,\ell)$ is also negative, and thus
$(z,k)\in \Inv{\alpha}$. Since $(z,\ell)$ is in $R$, this root is in $\Inv{\alpha_2}$, and hence
$\Inv{\theta_{\F}(\alpha_2)}\neq \emptyset$.  Thus applying $\theta_\F$ to the decomposition of $\Inv{\alpha}$
produces a non-trivial decomposition of $\theta_\F(\alpha)=(2,3,1)$. However the permutation $(2,3,1)$ is
irreducible, and this contradiction establishes the claim.

\parshape 1 0cm \textwidth

We similarly claim that there are no points of the graph of $\alpha$ strictly to the right of $R$.
Assume such a point $(y,\alpha(y))$ exists with $k < y$ and $\ell < \alpha(y) < \ell+s$.
Set $\F = \{z,k,y\}$ with $z = \alpha^{-1}(\ell+s)$ as above.
As before, we deduce that $(z,y)\in \Inv{\alpha_1}$ and so $\Inv{\theta_{\F}(\alpha_1})\neq \emptyset$,
that $(z,k)\in \Inv{\alpha_2}$ and so $\Inv{\theta_{\F}(\alpha_2})\neq \emptyset$, and hence that the
decomposition of $\Inv{\alpha}$ induces a nontrivial decomposition of the irreducible element
$\theta_\F(\alpha)=(3,1,2)$.  Thus there are no points on the graph of $\alpha$ to the right of $R$ either.

Since $\alpha$ is a permutation (and hence surjective) there is a point of its graph on each of the $s+1$ horizontal
lattice lines through $R$.  We have just shown that none of these points lie outside of $R$, and hence all are in
$R$. Each of these points lies on a different vertical lattice line of which there are $p+1$, and so $s\leq p$.
Thus $p=s$, $R$ is a square, and the graph of $\alpha$ contains $p+1$ points in $R$.  If $p+1 < n$ then $\alpha$ is
not simple because it has the block of size $p+1$ corresponding to $R$. If $p+1 = n$ then $(k,\ell)=(n,1)$
 and $\alpha$ is not simple because it has the block $\{1, 2, \ldots, n-1\}$ of size $n-1$.

A similar argument, with a rectangle of the form $R=[k,k+q]\times [\ell,\ell-s]$,
handles the case $p=0$ and $q\neq 0$.
\end{proof}

We can now prove our characterization of simple permutations.

\begin{proposition}\label{simple implies irreducible}
Let $n \geq 2$ and let $\alpha \in S_n$ with $\alpha \neq \Id_2$.
Then $\alpha\in S_n$ is simple if and only if it is irreducible and two-block simple.
%
%
\end{proposition}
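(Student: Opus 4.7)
The plan is to prove each direction separately, handling the forward implication by induction on $n$ and the backward implication by contrapositive using the simple-form decomposition guaranteed by Theorem~\ref{simple form theorem}.

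For the forward direction, suppose $\alpha$ is simple. Two-block simplicity is almost immediate: $\alpha(1)=1$ would make $\{2,\ldots,n\}$ a block of size $n-1$ (contradicting simpleness for $n\ge 3$, and forcing $\alpha=\Id_2$ when $n=2$), and symmetrically for $\alpha(n)=n$; meanwhile $\alpha(i+1)=\alpha(i)+1$ would make $\{i,i+1\}$ a block of size $2$ (contradicting simpleness for $n\ge 3$ and forcing $\alpha=\Id_2$ when $n=2$). For irreducibility I would induct on $n$. The base cases are the small simple permutations ($\wo[2]$ being directly irreducible since $|\Inv{\wo[2]}|=1$). For the inductive step, if $\alpha$ is simple and exceptional, Lemma~\ref{exceptional} gives irreducibility directly. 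Otherwise, Theorem~\ref{Schmerl-Trotter theorem} provides a simple one-point deletion $\alpha^\circ$, which is irreducible by induction, and then the contrapositive of Lemma~\ref{one-point-deletion-irreducible-means-not-simple} (``if $\alpha$ is simple and admits an irreducible one-point deletion, then $\alpha$ is irreducible'') finishes the step.

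For the backward direction I would prove the contrapositive: assuming $\alpha$ is not simple, I show that $\alpha$ is either reducible or not two-block simple. By Theorem~\ref{simple form theorem} write $\alpha=\sigma[\beta_1,\ldots,\beta_m]$ in simple form; three cases arise. If $\sigma=\Id_m$ with $m\ge 2$, Lemma~\ref{easy lemma} gives $\Inv{\alpha}=\bigsqcup_i \Psi_i^{-1}(\Inv{\beta_i})$, so if two or more $\beta_i$ are non-identity we obtain a non-trivial decomposition, while if exactly one $\beta_k\ne\Id$ then $\alpha$ fixes every element outside $U_k$, forcing $\alpha(1)=1$ or $\alpha(n)=n$ (since $m\ge 2$, one of $k\ne 1$ or $k\ne m$ must hold), violating two-block simplicity. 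If $\sigma=\wo[m]$ with $m\ge 3$, Corollary~\ref{when is Jm reducible} says $\wo[m]$ is reducible, and pulling a non-trivial decomposition of $\Inv{\wo[m]}$ through Lemma~\ref{easy lemma} yields a non-trivial decomposition of $\Inv{\alpha}$; if $\sigma=\wo[2]$, then either some $\beta_i\ne\Id$ (in which case Lemma~\ref{easy lemma} gives a decomposition with $\Inv{\wo[2][\Id,\Id]}=U_1\times U_2$ and $\Psi_i^{-1}(\Inv{\beta_i})$ both non-empty), or both $\beta_i=\Id$, in which case $\alpha$ has consecutive values at consecutive positions within the larger $U_i$ (which has size $\ge 2$), violating two-block simplicity. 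Finally, if $\sigma$ is simple with $m\ge 4$ and some $\beta_i\ne\Id_1$, Lemma~\ref{easy lemma} decomposes $\Inv{\alpha}$ with both $\Inv{\sigma[\Id,\ldots,\Id]}$ (non-empty since $\sigma\ne\Id_m$) and $\Psi_i^{-1}(\Inv{\beta_i})$ non-empty, so $\alpha$ is reducible.

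The main obstacle is the forward direction's induction on irreducibility: the statement fails without Theorem~\ref{Schmerl-Trotter theorem} providing a simple one-point deletion outside the exceptional family, and without Lemma~\ref{one-point-deletion-irreducible-means-not-simple} to propagate irreducibility upward. The backward direction is comparatively routine, essentially a bookkeeping exercise on the three branches of simple form combined with Lemma~\ref{easy lemma}; the only subtlety there is recognizing that when a piece of the decomposition collapses to a single non-identity block, the ``excluded'' identity pieces at the ends of $\alpha$ force a two-block simplicity failure.
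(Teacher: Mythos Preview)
Your forward direction matches the paper's proof exactly: two-block simplicity is immediate, and irreducibility proceeds by induction using Lemma~\ref{exceptional} for the exceptional permutations, Theorem~\ref{Schmerl-Trotter theorem} for the deletion, and the contrapositive of Lemma~\ref{one-point-deletion-irreducible-means-not-simple} to propagate irreducibility upward.

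Your backward direction takes a different route from the paper and has a gap. In your third case ($\sigma$ simple with $m\geq 4$), you write ``some $\beta_i\neq\Id_1$'' and then assert that $\Psi_i^{-1}(\Inv{\beta_i})$ is non-empty. But $\beta_i\neq\Id_1$ only says $z_i\geq 2$; it does not rule out $\beta_i=\Id_{z_i}$, in which case $\Inv{\beta_i}=\emptyset$ and your decomposition collapses. This sub-case (all $\beta_i=\Id_{z_i}$ with some $z_i\geq 2$) is exactly parallel to the one you handled correctly under $\sigma=\wo[2]$: consecutive elements of $U_i$ map to consecutive values, so $\alpha(j+1)=\alpha(j)+1$ and two-block simplicity fails. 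Adding that sentence fixes the argument.

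For comparison, the paper's backward direction is direct rather than contrapositive and avoids the case split entirely. Assuming $\alpha$ irreducible and two-block simple, Corollary~\ref{easy corollary} forces exactly one of $\sigma,\beta_1,\ldots,\beta_m$ to be a non-identity; two-block simplicity ($\alpha(1)\neq 1$, $\alpha(n)\neq n$) then rules out $\sigma=\Id_m$, so all $\beta_i=\Id_{z_i}$; two-block simplicity again forces every $z_i=1$, whence $\alpha=\sigma$. Finally $\sigma$ is either simple (done) or $\wo[m]$, and irreducibility with Corollary~\ref{when is Jm reducible} forces $m=2$. This is shorter and sidesteps the sub-case you missed.
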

\begin{proof}
First suppose $\alpha$ is irreducible and two-block simple, and
express $\alpha$ in simple form:  $\alpha=\sigma[\beta_1,\beta_2,\dots,\beta_m]$
with $\beta_b \in S_{z_b}$ for $b=1,2,\dots,m$.
By Corollary~\ref{easy corollary}, exactly one of the permutations $\sigma,\beta_1,\beta_2,\dots,\beta_m$
is a non-identity permutation.
Since $\alpha$ is two-block simple we have $\alpha(1)\neq 1$ and $\alpha(n)\neq n$.
If $\sigma=\Id_m$ then this would imply that $\beta_1 \neq \Id$ and $\beta_m \neq \Id$,
which is a contradiction.
Thus $\sigma \neq \Id$ and $\alpha=\sigma[\Id_{z_1},\Id_{z_2},\dots,\Id_{z_m}]$.
Since $\alpha$ is two-block simple, $z_b=1$ for all $b$ and hence
 $\alpha=\sigma$.   Now either $\sigma$ is simple or $\sigma = \wo[m]$.  In the former case, $\alpha=\sigma$
is simple.  Otherwise, if $\alpha=\wo[m]$, then we must have $m=2$ by Corollary~\ref{when is Jm reducible} and thus
$\alpha=\wo[2]$  is simple.

Next we suppose that $\alpha$ is simple.   We proceed by induction on $n$.
For $n=2$, we must have $\alpha=\wo[2]$ which is simple, two-block simple and irreducible.
For $n=3$, no elements are simple. 
For $n \geq 4$,  it follows immediately from the definitions that $\alpha$ is two-block simple.
It remains for us to prove, by induction, that simple implies irreducible.

For $n=4$ the only two simple permutations are $(2,4,1,3)$ and $(3,1,4,2)$, both of which are exceptional (they
appear as (1) and (2) on the list with $m=2$) and hence irreducible by Lemma~\ref{exceptional}.

Suppose $n \geq 5$ and that $\alpha \in S_n$ is simple.
If $\alpha$ is exceptional then the result follows from Lemma~\ref{exceptional}.
If not, then by Theorem~\ref{Schmerl-Trotter theorem}, $\alpha$ has a one-point deletion $\alpha^{\circ}$ which
is also simple, and hence irreducible by the inductive hypothesis.  But then $\alpha$ must also be irreducible.
If not, then Lemma~\ref{one-point-deletion-irreducible-means-not-simple} would apply to show that $\alpha$ is
not simple, contrary to assumption.
\end{proof}

\begin{remark}
The permutation $\alpha=(2,4,5,1,3) \in S_5$ is irreducible since its inversion set contains only one simple root.
However $\alpha$ is not two-block simple (and hence also not simple).  I.e., although the condition of being
simple implies that of being irreducible, the reverse implication does not hold.
\end{remark}

\begin{corollary}\label{main cor}
Suppose that $\alpha \in S_n$ is simple.  Then $\wo\alpha$ is irreducible.
\end{corollary}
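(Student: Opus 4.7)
The plan is very short: this corollary is an immediate composition of two previously established results, so my strategy is simply to chain them together and handle the small edge cases.

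First I would apply Lemma~\ref{complements are simple}, which states precisely that $\alpha \in S_n$ is simple if and only if $\wo\alpha$ is simple. Thus, starting from the hypothesis that $\alpha$ is simple, we immediately conclude that $\wo\alpha$ is also a simple permutation in $S_n$.

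Next I would invoke Proposition~\ref{simple implies irreducible}, which says that for $n \geq 2$ and any $\alpha' \in S_n$ with $\alpha' \neq \Id_2$, if $\alpha'$ is simple then $\alpha'$ is both irreducible and two-block simple. Applying this with $\alpha' = \wo\alpha$ yields that $\wo\alpha$ is irreducible, which is exactly the desired conclusion.

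The only subtlety is checking the trivial edge cases, where $\wo\alpha$ might fall outside the range of hypotheses of Proposition~\ref{simple implies irreducible}. For $n=1$ there is nothing to prove. For $n=2$, the only simple permutation is $\alpha = \wo[2]$, so $\wo\alpha = \Id_2$, whose inversion set is empty; since the only decomposition of the empty set is trivial, $\Id_2$ is irreducible by definition, and the corollary still holds. I expect no real obstacle here; the heavy lifting was done in establishing Lemma~\ref{complements are simple} and Proposition~\ref{simple implies irreducible}, and this corollary merely records the consequence for the complementary permutation.
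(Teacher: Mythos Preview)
Your proposal is correct and follows exactly the same approach as the paper: use Lemma~\ref{complements are simple} to see that $\wo\alpha$ is simple, then invoke Proposition~\ref{simple implies irreducible} to conclude it is irreducible. One small inaccuracy: by the paper's definition both $\Id_2$ and $\wo[2]$ are (vacuously) simple in $S_2$, not only $\wo[2]$; but this does not affect your argument, since when $\alpha=\Id_2$ we have $\wo\alpha=\wo[2]$, which is handled directly by Proposition~\ref{simple implies irreducible}.
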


\begin{proof}
By Lemma~\ref{complements are simple} the fact that $\alpha$ is simple implies that $\wo\alpha$ is simple.
But then $\wo\alpha$ is irreducible (and also two-block simple) by Proposition~\ref{simple implies irreducible}.
\end{proof}

This corollary is required for the proof of the main theorem and was one of the motivations for proving
Proposition~\ref{simple implies irreducible}.

The following lemma is also required for the proof of the main theorerm.
\begin{lemma}\label{vee lemma}
  Let $\alpha \in S_n$ be simple with $n\geq 4$.  Fix $k$ with $1 \leq k \leq n$.  Then there exists $(i,j) \in \Inv{\alpha}$
  with $i \neq k$ and $j \neq k$.
\end{lemma}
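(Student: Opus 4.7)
The plan is to approach this by one-point deletion. Let $\F = \{1,2,\dots,n\}\setminus\{k\}$ and consider $\alpha^\circ := \theta_\F(\alpha) \in S_{n-1}$. Under the order-preserving bijection $\F \to \{1,\dots,n-1\}$, inversions $(i,j) \in \Inv{\alpha}$ with $i \neq k$ and $j \neq k$ correspond bijectively to elements of $\Inv{\alpha^\circ}$. Thus the statement is equivalent to showing $\alpha^\circ \neq \Id_{n-1}$; that is, that every one-point deletion of $\alpha$ is non-identity.

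The key input is Proposition~\ref{simple implies irreducible}: since $\alpha$ is simple and $n \geq 4$, $\alpha$ is two-block simple, so $\alpha(1)\neq 1$, $\alpha(n)\neq n$, and $\alpha(i+1) \neq \alpha(i)+1$ for $1\leq i\leq n-1$. I will argue by contradiction: suppose $\alpha^\circ = \Id_{n-1}$, and set $\ell := \alpha(k)$. Then the sequence $\alpha(1),\dots,\alpha(k-1),\alpha(k+1),\dots,\alpha(n)$ is strictly increasing and ranges over $\{1,\dots,n\}\setminus\{\ell\}$, which forces $\alpha$ to take one of three explicit forms depending on whether $\ell = k$, $\ell < k$, or $\ell > k$.

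The case analysis is short. If $\ell = k$ then $\alpha = \Id_n$, contradicting $\alpha(1)\neq 1$. If $\ell > k$, the formula $\alpha(i)=i$ for $i<k$ combined with $\alpha(1)\neq 1$ forces $k = 1$; then $\alpha(i) = i$ for $i > \ell$ combined with $\alpha(n) \neq n$ forces $\ell = n$. This gives $\alpha = (n,1,2,\dots,n-1)$, in which $\alpha(3) = \alpha(2)+1$, contradicting two-block simplicity. The case $\ell < k$ is symmetric and forces $\alpha = (2,3,\dots,n,1)$, in which $\alpha(2) = \alpha(1)+1$, again contradicting two-block simplicity.

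All three cases give contradictions, so $\alpha^\circ \neq \Id_{n-1}$, which by the observation in the first paragraph yields the desired $(i,j)\in\Inv{\alpha}$ with $i,j\neq k$. The only mildly delicate step is the case analysis, but two-block simplicity is tailor-made to rule out precisely the two cyclic-shift permutations that arise, so there is no real obstacle; the condition $n \geq 4$ is used only to invoke Proposition~\ref{simple implies irreducible}.
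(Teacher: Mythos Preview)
Your proof is correct, but it takes a genuinely different route from the paper's argument. The paper proceeds constructively: it sets $a=\alpha^{-1}(1)$ and $b=\alpha^{-1}(n)$, observes that simplicity forces $a\notin\{1,n\}$ and $b\notin\{1,n\}$ (else there would be a block of size $n-1$), so that $(1,a)$ and $(b,n)$ are two inversions whose index sets $\{1,a\}$ and $\{b,n\}$ are disjoint; hence $k$ lies in at most one of them, and the other inversion is the desired $(i,j)$.

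Your approach instead reformulates the statement as ``no one-point deletion of $\alpha$ is the identity'' and then classifies, by a short case analysis, all permutations with an identity one-point deletion: they are $\Id_n$ and the two cyclic shifts $(n,1,\ldots,n-1)$ and $(2,\ldots,n,1)$, each ruled out by two-block simplicity. This is a clean and self-contained argument. Two small remarks: first, you do not actually need the full strength of Proposition~\ref{simple implies irreducible}; the implication ``simple $\Rightarrow$ two-block simple'' for $n\geq 4$ is immediate from the definitions (a violation produces a block of size $2$ or $n-1$), so there is no risk of circularity and no heavy machinery is required. Second, the paper's proof is somewhat shorter and has the advantage of explicitly exhibiting the inversion, while your argument has the conceptual bonus of characterizing exactly which permutations fail the conclusion.
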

\begin{proof}
  Define $a := \alpha^{-1}(1)$ and $b := \alpha^{-1}(n)$.  Then $a \neq 1$, $a \neq n$, $b\neq 1$ and $b \neq n$
   since $\alpha$ is simple.
  Clearly $(1,a), (b,n) \in \Inv{\alpha}$.
  The inequalities $1 \neq n$, $a \neq b$, $b \neq 1$ and $a \neq n$ imply that $\{1,a\} \cap \{b,n\} = \emptyset$.
   Thus either $k \notin\{1,a\}$ or $k \notin \{b,n\}$.  Hence one of $(1,a)$ or $(b,n)$ may be used as the required root
   $(i,j)$.
\end{proof}

We now prove the main theorem.

\medskip
\noindent
{\em Proof of Theorem~\ref{main theorem}.\ }
Suppose that $\Delta_n^+ = \Inv{\alpha_1} \sqcup \Inv{\alpha_2} \sqcup \dots \sqcup \Inv{\alpha_r}$
is a decomposition with $\Inv{\alpha_s} \neq \emptyset$ for all $s$,
and express $\alpha_1 = \sigma_1[\beta_{11},\beta_{12},\dots,\beta_{1m}]$ in simple form.
Recall that by assumption the highest root $(1,n)$ is an element
of $\Inv{\alpha_1}$. Let $\{1,2,\dots,n\} = U_1 \sqcup U_2 \sqcup \dots \sqcup U_m$
be the intervals corresponding to the simple
form $\alpha_1  =  \sigma_1[\beta_{11},\beta_{12},\dots,\beta_{1m}]$ with $z_b=|U_b|$.
Throughout this proof ``admissible'' refers to the intervals $U_1, \ldots, U_m$.



The assumption that $(1,n) \in \Inv{\alpha_1}$ means that $\sigma_1 \neq \Id_m$ and Corollary~\ref{new corollary}
implies that $\sigma_1={\wo[m]}$
and $\alpha_1$ is minus-decomposable or $\sigma_1\in S_m$ is simple with $m\geq 4$ and $\alpha_1$ is minus-indecomposable.
We consider these two cases separately.

First, suppose  that $\sigma_1\in S_m$ is simple with $m\geq 4$.
We then show that there is an $s\geq 2$ such that
$\alpha_s$ is of the form $\alpha_s=(\wo[m]\sigma_1)[\beta_{s1},\beta_{s2},\dots,\beta_{sm}]$.
Let $\F$ be an admissible set.  Then $\theta_\F(\alpha_1)=\sigma_1$ and
$\Inv{\sigma_1} \sqcup \Inv{\theta_\F(\alpha_2)} \sqcup \dots \sqcup \Inv{\theta_\F(\alpha_r)} = \Delta_\F^+$.
Thus
$\Inv{\wo[m]\sigma_1} = \Inv{\theta_\F(\alpha_2)} \sqcup \Inv{\theta_\F(\alpha_3)}\sqcup \dots \sqcup \Inv{\theta_\F(\alpha_r)}$.
The element $\wo \sigma_1$ is irreducible by Corollary~\ref{main cor} and thus there exists $\delta(\F)\geq 2$ such
that $\Inv{\wo \sigma_1} = \Inv{\theta_\F(\alpha_{\delta(\F)})}$, and
$\Inv{\theta_\F(\alpha_s)}=\emptyset$ for all $s\neq \delta(\F), s\geq 2$.
We claim that the number $\delta(\F)$ is independent of the choice of admissible set $\F$.

Recall that an admissible set is the choice of a single element from each of the sets $U_1$, $U_2$, \ldots,
$U_m$, and thus the admissible sets are in one to one correspondence with the points of $U_1\times U_2\times
\cdots \times U_m$.  Given any two admissible subsets $\F$ and $\F'$ we may find a sequence of
admissible subsets $\F_0=\F$, $\F_1$, \ldots, $\F_{l-1}$, $\F_l=\F'$ such that each $\F_i$ and $\F_{i+1}$ differ by
only a single element (i.e, under the correspondence with elements of $U_1\times\cdots\times U_m$, differ
in only a single coordinate).  To prove that $\delta(\F)$ is independent of the choice of admissible set
we may thus reduce to the case that $\F$ and $\F'$ differ by a single element.

Suppose that $\F'$ is obtained from $\F$ by replacing $u_k\in U_k$ with $u'_k\in U_k$ for some $1\leq k\leq m$.
There is a root $(i,j)$ in $\Inv{\wo\sigma_1}$ with $i\neq k$ and $j\neq k$ by Lemma~\ref{vee lemma}.
Since $\F$ and $\F'$ differ only in the element in $U_k$, the elements they choose from $U_i$ (respectively $U_j$)
are the same.  Set
$a=\Delta_{\F}\cap U_i = \Delta_{\F'}\cap U_i$ and $b=\Delta_{\F}\cap U_j = \Delta_{\F'}\cap U_j$.  Then
$(i,j)\in \Inv{\theta_{\F}(\alpha_s)}$ if and only if $(a,b)\in \Inv{\alpha_s}$, and similarly
$(i,j) \in \Inv{\theta_{F'}(\alpha_s)}$ if and only if $(a,b) \in \Inv{\alpha_s}$.  We have seen above that
$\Inv{\theta_\F(\alpha_s)}=\emptyset$ (respectively $\Inv{\theta_{\F'}(\alpha_s)}=\emptyset$)
for all $s\neq \delta(\F)$ (respectively, $s\neq \delta(\F')$), $s\geq 2$.  Thus $\delta(\F)=\delta(\F')$, and so
$\delta(\F)$ is constant for all admissible sets $\F$.  By relabeling the elements $\alpha_2, \ldots, \alpha_r$, we may assume that this
value is $2$.

The statement we have just proved, that $\Inv{\theta_{\F}(\alpha_2)} = \Inv{\wo\sigma_1}$ for all admissible sets
$\F$, is equivalent to the statement that for any $(i,j)\in \Delta_m$, and any $a\in U_i$, $b\in U_j$,
$(a,b)\in \Inv{\alpha_2}$ if and only if $(i,j)\in \Inv{\wo\sigma_1}$.  This implies that $\alpha_2$ permutes the
intervals $U_1$,\ldots, $U_m$, in the manner specified by $\wo\sigma_1$ and thus can be written as an inflation
$\alpha_2=(\wo\sigma_1)[\beta_{21},\ldots, \beta_{2,m}]$.  Specifically, $\beta_{2t} = \theta_{U_t}(\alpha_2)$ for
$t=1$,\ldots, $m$.

Summarizing, so far we have shown that
if $\alpha_1$ is minus-decomposable then $\alpha_1$ has simple form
$\alpha_1 = \wo[m][\beta_{11},\beta_{12},\dots,\beta_{1m}]$.
Conversely if $\alpha_1$ is minus-indecomposable then $\alpha_1$ has simple form
$\alpha_1 = \sigma_1[\beta_{11},\beta_{12},\dots,\beta_{1m}]$ and $\alpha_2$ has simple form
$\alpha_2=(\wo[m]\sigma_1)[\beta_{21},\beta_{22},\dots,\beta_{2m}]$ where $\sigma_1\in S_m$ is simple and
$m \geq 4$.

For the remaining $\alpha_s$ ($s=2,\ldots, r$ in the case that $\alpha_1$ is minus-decomposable, and $s=3,\ldots, r$
in the case that $\alpha_1$ is minus-indecomposable) we have $\alpha_s(a)<\alpha_s(b)$ for all $a\in U_i$, $b\in U_j$
and $1\leq i<j\leq m$ since these roots $(a,b)$ are all contained in
$\Inv{\alpha_1}$ (respectively $\Inv{\alpha_1}\sqcup \Inv{\alpha_2}$).
This implies that for each such $\alpha_s$ we have $\alpha_s(U_i)=U_i$ for each $i$ and
hence that $\alpha_s=\Id_m[\beta_{s1},\ldots, \beta_{sm}]$ with $\beta_{si}=\theta_{U_i}(\alpha_s)$ for
$i=1$,\ldots, $m$.   This proves the theorem in the case of a general (possibly reducible) decomposition.

Next we consider irreducible decompositions.
Corollary~\ref{easy corollary},
shows the necessity of conditions (i), (ii) and (iii) of the final assertion.  The element
$J_m$ is irreducible if and only if $m=2$ and this shows the necessity of condition (iv).
Thus these four conditions hold for irreducible decompositions.
Finally if these four conditions hold it is clear that each of the inversion sets
$\Inv{\alpha_a}$ is irreducible by Corollary~\ref{easy corollary}.
\hfill \qed


\section{Symmetric permutations} \label{section symmetric}
The aim of this section is to extend the results obtained so far to a special class of permutations.
The results will then be used in the next section to study
root systems of types $B$, $C$ and $D$.

A permutation $\alpha \in S_N$  is {\it symmetric} if $\alpha = \wo_{N} \alpha \wo_{N}$. Equivalently,
$\alpha \in S_N$ is symmetric if the graph of $\alpha$ is symmetric under rotation by $\pi$ radians about the
point $(\frac{N+1}{2}, \frac{N+1}{2})$.

\begin{proposition} \label{proposition 43}
Let $\alpha \in S_N$  and write $\alpha$ in simple form: $\alpha=\sigma[\beta_1,\beta_2,\dots,\beta_s]$.
If $\alpha$ is symmetric then $\sigma$ is necessarily symmetric
and $\beta_{s+1-b} = \wo_{z_b} \beta_b \wo_{z_b}$ for all $b=1,2,\dots,s$.
Consequently, if $N$ is odd then $s = 2m+1$ is odd, $z_{m+1}$ is odd and $\beta_{m+1} \in S_{z_{m+1}}$ is symmetric.
If $N$ is even then then $s$ may be even or odd; if, in addition, $s = 2m+1$ is odd then $z_{m+1}$ is even and
$\beta_{m+1} \in S_{z_{m+1}}$ is symmetric.
\end{proposition}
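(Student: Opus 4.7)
The plan is to invoke the uniqueness of the simple form expression (Theorem~\ref{simple form theorem}) applied to both $\alpha$ and $\wo[N]\alpha\wo[N]$, which are equal by the symmetry hypothesis. Thus the first step will be to compute how the simple form of $\alpha=\sigma[\beta_1,\ldots,\beta_s]$ transforms under conjugation by $\wo[N]$, and the second step will be to verify that the resulting expression is itself in simple form so that uniqueness can legitimately be invoked.

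For the first step, I would show by direct inspection (using either the shuffling description or Lemma~\ref{easy lemma}) that with intervals $U_1 < U_2 < \cdots < U_s$ of sizes $z_1, z_2, \ldots, z_s$ one has
\begin{equation*}
\wo[N]\,\alpha\,\wo[N] = (\wo[s]\sigma\wo[s])[\wo[z_s]\beta_s\wo[z_s],\; \wo[z_{s-1}]\beta_{s-1}\wo[z_{s-1}],\;\ldots,\; \wo[z_1]\beta_1\wo[z_1]]
\end{equation*}
with corresponding intervals $\wo[N](U_s) < \wo[N](U_{s-1}) < \cdots < \wo[N](U_1)$ of sizes $z_s, z_{s-1},\ldots, z_1$. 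Geometrically this is just the assertion that rotating the graph of $\alpha$ by $\pi$ about the center point $((N+1)/2, (N+1)/2)$ reverses the order of the intervals and conjugates each component permutation by the appropriate longest element.

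For the second step, the identities $\wo[s]\Id_s\wo[s] = \Id_s$ and $\wo[s]\wo[s]\wo[s] = \wo[s]$ dispose of two of the three possible shapes of outer permutation, while in the simple case the argument of Lemma~\ref{complements are simple} adapts (the property ``no non-trivial $t\times t$ block'' is invariant under rotation by $\pi$) to show $\wo[s]\sigma\wo[s]$ is simple whenever $\sigma$ is. For the maximality conditions in Theorem~\ref{simple form theorem}, I would verify by a direct computation that $\wo[k](\Id_2[\gamma_1,\gamma_2])\wo[k] = \Id_2[\wo\gamma_2\wo, \wo\gamma_1\wo]$ and the analogous identity for $\wo[2]$, so conjugation by $\wo$ preserves both plus- and minus-indecomposability. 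Uniqueness of the simple form then yields $z_b = z_{s+1-b}$, $\sigma = \wo[s]\sigma\wo[s]$ (so $\sigma$ is symmetric), and $\beta_b = \wo[z_b]\beta_{s+1-b}\wo[z_b]$, which rearranges to the desired $\beta_{s+1-b} = \wo[z_b]\beta_b\wo[z_b]$.

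For the consequences, the $b = m+1$ instance of the identity (when $s = 2m+1$ is odd) reads $\beta_{m+1} = \wo[z_{m+1}]\beta_{m+1}\wo[z_{m+1}]$, so $\beta_{m+1}$ is symmetric. For the parity statements I would observe that when $s$ is odd the central interval $U_{m+1}$ must be preserved by $\wo[N]$; writing $U_{m+1} = [a, a+z_{m+1}-1]$ and imposing this gives $2a + z_{m+1} - 1 = N+1$, whence $z_{m+1}\equiv N\pmod{2}$. When $N$ is odd, the integer $(N+1)/2$ is itself fixed by $\wo[N]$ and must lie in a self-symmetric interval $U_c$, forcing $c = s+1-c$ and hence $s$ odd, with $z_{m+1}$ odd by the congruence. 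When $N$ is even, both parities of $s$ are possible a priori, but the same centering calculation makes $z_{m+1}$ even whenever $s$ is odd. The main obstacle is the bookkeeping of the second step: checking, case by case on the shape of $\sigma$, that conjugation by $\wo$ preserves both the shape of the outer permutation and the appropriate indecomposability condition on the $\beta_b$'s, so that the uniqueness hypothesis of Theorem~\ref{simple form theorem} actually applies.
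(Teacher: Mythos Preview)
Your proposal is correct and follows essentially the same approach as the paper: compute that $\wo[N]\alpha\wo[N] = (\wo[s]\sigma\wo[s])[\wo[z_s]\beta_s\wo[z_s],\ldots,\wo[z_1]\beta_1\wo[z_1]]$ is again a simple form expression, then invoke the uniqueness in Theorem~\ref{simple form theorem}. The paper's proof is terser, simply asserting that the displayed expression is the simple form of $\wo[N]\alpha\wo[N]$, whereas you have spelled out the case analysis needed to verify this (preservation of simplicity, of $\Id$ and $\wo$, and of plus/minus-indecomposability under conjugation by $\wo$).
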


\begin{proof} The proof follows from the facts that, if $\alpha \in S_N$ is written in simple form as
$\alpha=\sigma[\beta_1,\beta_2,\dots,\beta_s]$, then the simple form of $\wo_N \alpha \wo_N$
is
$$
\wo_N \alpha \wo_N = (\wo_s \sigma \wo_s) [ \wo_{z_s} \beta_s \wo_{z_s}, \wo_{z_{s-1}} \beta_{s-1} \wo_{z_{s-1}}, \dots, \wo_{z_1} \beta_1 \wo_{z_1}]
$$
and the uniqueness of the simple form.
\end{proof}

Next we define an inflation operation which produces symmetric permutations.
Let $0 \leq p \leq n$ and let $\{1,2,\dots,n-p\} = U_1 \sqcup U_2 \sqcup \dots \sqcup U_m$ be a decomposition into intervals.
Put $z_b = |U_b|$ for $b=1,2,\dots,m$.
Suppose that $\beta_b \in S_{z_b}$ for $b=1,2,\dots,m$ and $\beta_{m+1} \in S_{2p+1}$ or $\beta_{m+1} \in S_{2p}$.
(For uniformity of notation we allow considering $S_{2p}$ for $p=0$; we will use $\phantomelement$ to denote the ``phantom''
element of $S_0$.)
Let $\sigma \in \left\{ \begin{array}{lcl}
S_{2m+1} & {\text { if }} & \beta_{m+1} \neq \phantomelement \\ S_{2m} & {\text { if }} & \beta_{m+1} = \phantomelement  \end{array} \right.$,

We form the inflation
$$
\alpha = \sigma[\beta_1,\beta_2,\dots,\beta_m,\beta_{m+1}, \beta_{m+2},\dots,\beta_{2m+1}]
$$
where $\beta_{2m+2-b} = \wo_{z_b} \beta_b \wo_{z_b}$ for $t=1,2,\dots,m$.  Clearly
$\alpha \in \left\{ \begin{array}{lcl}
S_{2n+1} & {\text { if }} & \beta_{m+1} \in S_{2p+1} \\ S_{2n} & {\text { if }} & \beta_{m+1} \in S_{2p}  \end{array} \right.$ is a
symmetric permutation. We call this operation {\it symmetric inflation} and denote it by
$$
\alpha =\sigma [[\beta_1, \ldots, \beta_m; \beta_{m+1}]].
$$

Proposition~\ref{proposition 43} implies that the natural notion of a ``simple symmetric permutation'' is equivalent with the requirement that a
symmetric permutation is simple. More precisely, we have the following corollary.

\begin{corollary} \label{corollary 431} Let $\alpha \in S_{2n+1}$ (respectively, $\alpha \in S_{2n}$) be a symmetric element.
Then $\alpha$ is simple in $S_{2n+1}$ (respectively, in $S_{2n}$) if and only if
$$
\alpha = \sigma[[\beta_1,\beta_2,\dots,\beta_m;\beta_{m+1}]]
$$
implies $m= 0$ or $m = n$. \qed
\end{corollary}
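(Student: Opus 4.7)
The plan is to leverage Proposition~\ref{proposition 43}, which asserts that the simple form of a symmetric permutation is automatically a symmetric inflation. Explicitly, if $\alpha = \sigma[\beta_1, \ldots, \beta_s]$ is the simple form of a symmetric $\alpha \in S_N$ (with $N = 2n+1$ or $N = 2n$), then this expression is identified with a symmetric inflation $\alpha = \sigma[[\beta_1, \ldots, \beta_m; \beta_{m+1}]]$ in which $s = 2m+1$ (with $\beta_{m+1}$ the central block) or $s = 2m$ (with $\beta_{m+1} = \phantomelement$). The corollary then reduces to understanding which values of $m$ can arise.

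For the forward direction, I assume $\alpha$ is simple. Since a simple permutation admits no non-trivial inflations, any symmetric inflation $\alpha = \sigma[[\beta_1, \ldots, \beta_m; \beta_{m+1}]]$, read as an ordinary inflation with $s = 2m+1$ or $s = 2m$ components, must satisfy either $s = 1$ (forcing $m = 0$, $\sigma = \Id_1$, and $\alpha = \beta_{m+1}$) or $s = N$ with every $z_b = 1$ (forcing $p = 0$, $m = n$, and $\alpha = \sigma$). In either case $m \in \{0, n\}$.

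For the reverse direction I will argue the contrapositive: if $\alpha$ is symmetric but not simple, I will produce a symmetric inflation of $\alpha$ with $0 < m < n$. Starting from the simple form of $\alpha$ and applying Proposition~\ref{proposition 43}, one obtains a symmetric inflation whose parameter $m$ already satisfies $m \geq 1$ (because the simple form has $s \geq 2$) and, when $s < N$, also $m < n$ via the translation $s = 2m + 1$ or $s = 2m$. The residual case $s = N$ forces every $z_b = 1$ and hence $\alpha = \sigma$; since $\alpha$ is not simple, $\sigma$ must be $\Id_N$ or $\wo[N]$, rather than a simple permutation (which would make $\alpha$ simple). For each of these two explicit permutations I will display a concrete alternative symmetric inflation, for instance $\Id_N = \Id_3[[\Id_a ; \Id_{N-2a}]]$ with $a \geq 1$ chosen so that $N - 2a$ has the correct parity, producing $m = 1$ and hence $0 < m < n$ as long as $n \geq 2$.

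The main obstacle I anticipate is the bookkeeping between the two parity cases of $N$ together with the residual $\alpha \in \{\Id_N, \wo[N]\}$ situation: I must confirm that the choice of $a$ respects both the parity of $z_{m+1}$ demanded by Proposition~\ref{proposition 43} (odd when $N$ is odd, even when $N$ is even) and the strict inequality $m < n$.
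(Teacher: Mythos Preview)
Your approach is correct and is exactly the one the paper intends: the corollary is stated with a bare \qed, the only justification being the preceding sentence that Proposition~\ref{proposition 43} makes the simple form of a symmetric permutation automatically a symmetric inflation. Your write-up is in fact more careful than the paper's, since you isolate the residual case $\alpha\in\{\Id_N,\wo[N]\}$ (where the simple form itself has $s=N$ and hence $m=n$) and supply an explicit alternative symmetric inflation with $m=1$; your observation that this step needs $n\geq 2$ is accurate and reflects a genuine boundary issue in the statement rather than a defect in your argument.
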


Finally, Theorem~\ref{simple form theorem} implies the existence of a {\it simple symmetric form expression} of a symmetric element $\alpha \in S_N$.

\begin{proposition} \label{simple symmetric form}
Let $\alpha \in S_N$ be symmetric. Then $\alpha$ can be written as
$$
\alpha = \sigma[[\beta_1, \ldots, \beta_m; \beta_{m+1}]],
$$
where $\sigma \in S_M$ is simple with $M \geq 4$ or $\sigma = I_M$ or $\sigma = \wo_M$. Furthermore, this expression is unique
if we require that $M$ be maximal when $\sigma = I_M$ or $\sigma = \wo_M$.  \qed
\end{proposition}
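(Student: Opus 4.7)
The plan is to derive Proposition~\ref{simple symmetric form} as a direct consequence of Theorem~\ref{simple form theorem} and Proposition~\ref{proposition 43}. Given the symmetric permutation $\alpha \in S_N$, I would first apply Theorem~\ref{simple form theorem} to obtain the unique simple form expression
$$
\alpha = \sigma[\beta_1, \beta_2, \ldots, \beta_s],
$$
with corresponding interval decomposition $\{1,\ldots,N\} = U_1 \sqcup \cdots \sqcup U_s$ and $z_b = |U_b|$, where $\sigma \in S_s$ is either simple with $s \geq 4$, or $\sigma = \Id_s$ with $s$ maximal, or $\sigma = \wo_s$ with $s$ maximal.

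Next I would invoke Proposition~\ref{proposition 43}: since $\alpha$ is symmetric, $\sigma \in S_s$ must itself be symmetric, and the relations $\beta_{s+1-b} = \wo_{z_b} \beta_b \wo_{z_b}$ must hold for all $b = 1, \ldots, s$. These are precisely the conditions enforced by the symmetric inflation construction. If $s = 2m+1$ is odd, then the middle factor satisfies $\beta_{m+1} = \wo_{z_{m+1}} \beta_{m+1} \wo_{z_{m+1}}$ (so $\beta_{m+1}$ is itself symmetric), and I may rewrite
$$
\alpha = \sigma[[\beta_1, \ldots, \beta_m; \beta_{m+1}]], \qquad M := s = 2m+1.
$$
If $s = 2m$ is even, there is no middle factor; I set $\beta_{m+1} = \phantomelement$ and obtain
$$
\alpha = \sigma[[\beta_1, \ldots, \beta_m; \phantomelement]], \qquad M := s = 2m.
$$
In either case $M = s$, so the trichotomy on $\sigma$ (simple with $M \geq 4$, or $\Id_M$, or $\wo_M$) transfers from the ordinary simple form without alteration.

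For uniqueness, every simple symmetric form expression of $\alpha$ is by construction a special instance of an ordinary simple form expression of $\alpha$ (take $m$ to be $M$ in the ordinary form, omitting the phantom factor when present). The maximality of $M$ in the symmetric form when $\sigma = \Id_M$ or $\sigma = \wo_M$ coincides with the maximality of $s$ in the ordinary simple form, since any enlargement of $M$ in the symmetric setting would immediately yield an enlargement of $s$ in the ordinary setting. The uniqueness assertion of Theorem~\ref{simple form theorem} therefore pins down the symmetric expression uniquely.

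The main obstacle is essentially bookkeeping: handling the two parities of $s$ cleanly via the phantom element convention, and verifying that maximality of $M$ in the symmetric setting matches maximality in the underlying ordinary simple form. No substantive new argument beyond Theorem~\ref{simple form theorem} and Proposition~\ref{proposition 43} is required.
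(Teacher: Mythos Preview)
Your proposal is correct and follows the same approach as the paper, which simply records (with a \qed{} and no explicit proof) that the result is an immediate consequence of Theorem~\ref{simple form theorem} together with Proposition~\ref{proposition 43}. You have merely spelled out the details the paper leaves implicit: applying the ordinary simple form, using Proposition~\ref{proposition 43} to see that this form already has the symmetric structure, and then reading off uniqueness from the uniqueness in Theorem~\ref{simple form theorem}.
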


It also natural to discuss decomposing $\Delta_N^+$ into symmetric inversion sets. Theorem~\ref{main theorem} and Proposition~\ref{proposition 43}
imply in a straightforward manner the following theorem.

\begin{theorem}\label{symmetric A main theorem} Let $N = 2n+1$ or $N  = 2n$.
Suppose $\alpha_1, \alpha_2,\dots,\alpha_r \in S_N$ are symmetric elements and
$$
\Delta_N^+ = \Inv{\alpha_1} \sqcup \Inv{\alpha_2} \sqcup \dots \sqcup \Inv{\alpha_r}
$$
with all $\Inv{\alpha_a} \neq \emptyset$.  Without loss of generality assume that the root $(1, N) \in \Inv{\alpha_1}$.
Let $\alpha_1 =  \sigma_1[[\beta_{11},\beta_{12},\dots,\beta_{1m};\beta_{1(m+1)}]]$ be the simple symmetric form expression for $\alpha_1$
with $\sigma_1 \in S_M$ and a corresponding partition of the set $\{1,2,\dots,n\}$ into $m+1$ intervals of
 lengths $z_1,z_2,\dots,z_m,z_{m+1}$.
 Then, up to reordering of $\alpha_2,\alpha_3,\dots,\alpha_r$ there exist unique elements
$\sigma_a \in S_M$, $\beta_{ab} \in S_{z_b}$
and $\beta_{a(m+1)} \in S_P$, with $P = N - 2(z_1 + \ldots + z_m)$  such that
$\alpha_a = \sigma_a[[\beta_{a1},\beta_{a2},\dots,\beta_{am};\beta_{a(m+1)}]]$ for $a=2,\dots,r$
and
\begin{itemize}
\item[(i)] $$\begin{array}{rcl}
\Delta^+_M &=& \Inv{\sigma_1} \sqcup \Inv{\sigma_2} \sqcup \dots \sqcup \Inv{\sigma_r},  \\
&&\\
\Delta^+_{z_1} & = & \Inv{\beta_{11}} \sqcup \Inv{\beta_{21}} \sqcup \dots \sqcup \Inv{\beta_{r1}}, \\
\Delta^+_{z_2} & = & \Inv{\beta_{12}} \sqcup \Inv{\beta_{22}} \sqcup \dots \sqcup \Inv{\beta_{r2}},\\
&\vdots&\\
\Delta^+_{z_m} & = & \Inv{\beta_{1m}} \sqcup \Inv{\beta_{2m}} \sqcup \dots \sqcup \Inv{\beta_{rm}},\\
&&\\
\Delta^+_P & = & \Inv{\beta_{1(m+1)}} \sqcup \Inv{\beta_{2(m+1)}} \sqcup \dots \sqcup \Inv{\beta_{r(m+1)}};
\end{array}
$$
\item[(ii)]   if $\alpha_1$ is minus-decomposable then $\sigma_1 = \wo$
and $\sigma_2=\sigma_3=\dots=\sigma_r=\Id$;
\item[(iii)]  if $\alpha_1$ is minus-indecomposable then $\sigma_1$ is  simple
 and, after relabelling $\alpha_2, \ldots, \alpha_r$, we have $\sigma_2 = \wo \sigma_1$,
and $\sigma_3=\sigma_4=\dots=\sigma_r=\Id$.
\end{itemize}
In particular, $\sigma_1$ and at most one other of the $\sigma_a$ are not equal to the identity.\\

Let $q$ denote the number of $\sigma_a$ which are not $\Id$, i.e.,
$q := \begin{cases} 1, &\text{if }\alpha_1 \text{ is minus-decomposable};\\
2, &\text{if }\alpha_1 \text{ is minus-indecomposable}.
\end{cases}$
Again, after relabelling $\alpha_2, \ldots, \alpha_r$, we assume that $\sigma_{q+1} = \ldots = \sigma_r = I$.

The above decomposition of $\Delta_N^+$ is irreducible
 if and only if the following four conditions hold
\begin{itemize}
\item[(i)] each of the decompositions listed in (i) above
is irreducible;
\item[(ii)] exactly one of of $\beta_{a1},\beta_{a2},\dots,\beta_{am}$ is not equal to the identity for $a=q+1,\dots, r$;
\item[(iii)]  $\beta_{ab} = \Id$ for $a=1,\dots, q$ and $b=1,\dots, m+1$;
\item[(iv)]  $m = 1$ if $\alpha_1$ is minus-decomposable.  \qed
\end{itemize}
\end{theorem}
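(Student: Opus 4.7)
The plan is to derive Theorem~\ref{symmetric A main theorem} from Theorem~\ref{main theorem} by exploiting symmetry. Writing $\alpha_1$ in simple symmetric form $\sigma_1[[\beta_{11},\ldots,\beta_{1m};\beta_{1(m+1)}]]$ unfolds to a specific ordinary simple form $\alpha_1 = \sigma_1[\beta_{11},\ldots,\beta_{1M}]$ (with $M=2m+1$ or $M=2m$), and by Proposition~\ref{proposition 43} the associated ordered partition $\{1,\ldots,N\}=U_1\sqcup\cdots\sqcup U_M$ is symmetric in the sense that $U_{M+1-b}$ is the reflection of $U_b$ and $z_b=z_{M+1-b}$. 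Applying Theorem~\ref{main theorem} to the decomposition $\Delta_N^+ = \bigsqcup_a \Phi(\alpha_a)$ then produces, after a reordering of $\alpha_2,\ldots,\alpha_r$, a coordinated system of ordinary inflations $\alpha_a = \sigma_a[\beta_{a1},\ldots,\beta_{aM}]$ satisfying the decomposition conditions on $\Delta^+_M$ and on each $\Delta^+_{z_b}$, with the $\sigma_a$ occupying one of the three configurations enumerated there.

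The second step is to upgrade this output to the symmetric setting using the symmetry of each $\alpha_a$. Conjugating the inflation of $\alpha_a$ by $J_N$ yields
$$
J_N\alpha_a J_N \;=\; (J_M\sigma_a J_M)\bigl[J_{z_M}\beta_{aM}J_{z_M},\ldots,J_{z_1}\beta_{a1}J_{z_1}\bigr],
$$
and since $J_N\alpha_a J_N=\alpha_a$, the uniqueness of the simple form from Theorem~\ref{simple form theorem} applied to the symmetric partition forces $J_M\sigma_a J_M=\sigma_a$ and $\beta_{a(M+1-b)}=J_{z_b}\beta_{ab}J_{z_b}$. Hence every $\alpha_a$ admits a simple symmetric form $\sigma_a[[\beta_{a1},\ldots,\beta_{am};\beta_{a(m+1)}]]$ coordinated with that of $\alpha_1$, with $\beta_{a(m+1)}$ automatically symmetric whenever $M=2m+1$. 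In the minus-indecomposable case, the element $\sigma_2=J_M\sigma_1$ is symmetric because $\sigma_1$ is already symmetric and hence commutes with $J_M$.

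The decomposition identities of Theorem~\ref{main theorem} then collapse under these symmetry relations: the decompositions for $b=m+2,\ldots,M$ are conjugates of those for $b=1,\ldots,m$, so the independent data are exactly the decomposition of $\Delta^+_M$, the decompositions of $\Delta^+_{z_b}$ for $b\leq m$, and that of the middle $\Delta^+_P$, establishing condition~(i). Conditions~(ii) and~(iii) on which $\sigma_a$'s are non-identity transfer directly. For the irreducibility characterization, the coalescence/refinement arguments of Proposition~\ref{coalescence} and Corollary~\ref{easy corollary} show that reducibility in any listed sub-decomposition (including the middle $\Delta^+_P$) yields a non-trivial symmetric refinement of the whole decomposition, and the condition $m=1$ when $\alpha_1$ is minus-decomposable comes from Corollary~\ref{when is Jm reducible}, which forces $M=2$ for $J_M$ to be irreducible and thus leaves no room for a middle block.

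The step I expect to be the main obstacle is the uniqueness/symmetry argument in the second paragraph: one must track the reordering of $\alpha_2,\ldots,\alpha_r$ inherited from Theorem~\ref{main theorem}, verify that the inflation data for each $\alpha_a$ is genuinely forced to be symmetric (not merely compatible with symmetry), and handle both parities of $M$ uniformly, with either a genuine middle block when $M=2m+1$ or $\beta_{a(m+1)}=\phantomelement$ when $M=2m$. Once these points are in place, the translation from the non-symmetric to the symmetric statement, including the irreducibility criterion, proceeds mechanically.
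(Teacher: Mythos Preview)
Your approach is exactly the one the paper intends: the paper simply states that the theorem follows ``in a straightforward manner'' from Theorem~\ref{main theorem} and Proposition~\ref{proposition 43}, and your proposal is a correct fleshing-out of that sentence. Two points of execution need repair, however.

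First, the uniqueness you invoke in the second paragraph is not the uniqueness of Theorem~\ref{simple form theorem}. For $a\geq 2$ the expression $\alpha_a=\sigma_a[\beta_{a1},\ldots,\beta_{aM}]$ produced by Theorem~\ref{main theorem} is \emph{not} in general the simple form of $\alpha_a$ (indeed $\sigma_a=\Id_M$ for most $a$). What you actually need is the elementary fact that, once the ordered interval partition $U_1,\ldots,U_M$ is fixed, the data $(\sigma,\beta_1,\ldots,\beta_M)$ of any compatible inflation are determined by $\sigma=\theta_{\mathcal F}(\alpha)$ and $\beta_b=\theta_{U_b}(\alpha)$; equivalently, you may cite the uniqueness clause already built into Theorem~\ref{main theorem}. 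With that correction your symmetry argument goes through: since the partition is $J_N$-stable and $\alpha_a=J_N\alpha_a J_N$, the two sets of inflation data must agree, giving $J_M\sigma_a J_M=\sigma_a$ and $\beta_{a(M+1-b)}=J_{z_b}\beta_{ab}J_{z_b}$.

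Second, your treatment of irreducibility condition~(iv) is off. You invoke Corollary~\ref{when is Jm reducible} to force $M=2$ and ``no room for a middle block'', but that corollary concerns ordinary reducibility in $S_M$, whereas here the relevant notion is \emph{symmetric} irreducibility: $\alpha_1$ must not split as a disjoint union of two nonempty inversion sets of symmetric permutations. The correct observation (the symmetric analogue of Corollary~\ref{when is Jm reducible}, and exactly what the paper records in the proof of Theorem~\ref{symmetric main theorem} in the $B/C$ setting) is that $J_M$ is symmetrically irreducible if and only if $M\in\{2,3\}$: for $M\leq 3$ the only symmetric elements of $S_M$ are $\Id_M$ and $J_M$, while for $M\geq 4$ one has, e.g., the symmetric splitting $\Delta_M^+=\Phi\bigl(J_2[\Id,\Id]\bigr)\sqcup\Phi\bigl(\Id_2[J,J]\bigr)$. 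Hence condition~(iv) reads $m=1$ and genuinely allows both $M=2$ (no middle block) and $M=3$ (a middle block); your argument incorrectly excludes the latter.
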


\section{Decompositions of Types B, C and D} \label{section BCD}
We now turn to root systems of types $B$, $C$ and $D$. We introduce some notation related to these root systems;
our exposition is limited only to the minimum that we need. For a reference on root systems, see \cite{BB}.
We will compare the root systems of types $B$, $C$ and $D$ with the root systems of type $A$.
We take $\{e_1,e_2,\dots,e_{n+1}\}$ as a standard basis for $\R^{n+1}$
 and consider  the Weyl group
$\W(A_n) \cong S_{n+1}$ as the group of all permutations of this basis.
With this notation, the positive roots are
$$
\Delta_{A_n}^+ = \{(i,j)=e_i-e_j \mid 1 \leq i < j \leq n+1\}.
$$

To describe the root systems $B_n, C_n$ and $D_n$, fix a standard basis
$\{\varepsilon_1,\varepsilon_2,\dots,\varepsilon_n\}$ of $\R^n$. The corresponding positive roots are
$$
\begin{array}{lll}
B_n: & \quad & \Delta_{B_n}^+ =  \{\varepsilon_i \pm \varepsilon_j \mid 1 \leq i < j \leq n\} \sqcup \{\varepsilon_i \mid 1 \leq i \leq n\};\\
C_n: & \quad & \Delta_{C_n}^+ = \{\varepsilon_i \pm \varepsilon_j \mid 1 \leq i < j \leq n\} \sqcup \{2\varepsilon_i \mid 1 \leq i \leq n\};\\
D_n: & \quad & \Delta_{D_n}^+ = \{\varepsilon_i \pm \varepsilon_j \mid 1 \leq i < j \leq n\}.\\
\end{array}
$$

The Weyl group $\W(B_n)$ is the set of signed permutations of the set
$$
\{\varepsilon_1,\varepsilon_2,\dots,\varepsilon_n,0,-\varepsilon_n,\dots,-\varepsilon_2,-\varepsilon_1 \}.
$$
These are the permutations $\alpha$ of this set such that $\alpha(0)=0$ and $\alpha(-\varepsilon_i)=-\alpha(\varepsilon_i)$
for all $1 \leq i \leq n$.
Abstractly, $\W(B_n) \cong S_n \rtimes (\Z/2\Z)^{n}$.

The Weyl group $\W(C_n)$ is the set of signed permutations of the set
$$
\{\varepsilon_1,\varepsilon_2,\dots,\varepsilon_n,-\varepsilon_n,\dots,-\varepsilon_2,-\varepsilon_1 \}.
$$
Abstractly, $\W(C_n) \cong S_n \rtimes (\Z/2\Z)^{n}$.

The Weyl group $\W(D_n)$ is the set of signed permutations of the set
$$
\{\varepsilon_1,\varepsilon_2,\dots,\varepsilon_n,-\varepsilon_n,\dots,-\varepsilon_2,-\varepsilon_1 \}
$$
involving an even number of sign changes, i.e., permutations $\alpha$ for which $\alpha(\varepsilon_i) = - \varepsilon_j$ for
an even number of indices $1 \leq i \leq n$.
Abstractly, $\W(D_n) \cong S_n \rtimes (\Z/2\Z)^{n-1}$.

In order to treat the roots systems of types $B, C$ and $D$ and their Weyl groups uniformly, we introduce some notation.
Instead of discussing separately the root systems $B_n, C_n$ or $D_n$ we will sometimes discuss
the root system $X_n$ understanding that $X$ stands for one of $B, C$ or $D$.
For uniformity of notation below, when considering $X_n$, we allow all values of $n \geq 0$: for instance, $\Delta_{X_0}^+ = \emptyset$.

Let
$$
\tilde{n} := \left\{ \begin{array}{lcl} 2n & {\text{ if }}&  X_n = B_n \\ 2n-1 & {\text{ if }} & X_n = C_n {\text{ or }} D_n. \end{array} \right.
$$
Extend the set
$\{\ep_1, \ldots, \ep_n\}$ to the set
$\X_{X_n} = \{ \ep_1, \ldots, \ldots, \ep_{\tilde{n}+1} \},$
where $\ep_i = - \ep_{\tilde{n}+2 - i}$. Note that for $X = B$ this forces $\ep_{n+1} = 0$.
Put $\Y_{X_n} = \{ e_1, \ldots, e_{\tilde{n}+1} \}$.
In order to discuss the relationship between the positive roots of $X_n$ and $A_{\tilde n}$ we set
$$
\hat{\Delta}_{X_n}^+ := \left\{
\begin{array}{lcl} \Delta_{X_n}^+ & {\text{ if }}&  X_n = C_n \\ \Delta_{X_n}^+ \sqcup \{2 \ep_i \mid 1 \leq i \leq n\}
 & {\text{ if }} & X_n = B_n {\text{ or }} D_n. \end{array} \right.
$$
Identifying $\X_{X_n}$ and $\Y_{X_n}$ by the map $\ep_i \leftrightarrow e_i$ yields an embedding
$$
\iota: \W(X_n) \hookrightarrow \W(A_{\tilde n})
$$
and a surjection
$$
\rho: \Delta_{A_{\tilde{n}}}^+ \to \hat{\Delta}_{X_n}^+.
$$

For $\alpha \in \W(X_n)$ we define $\Inv{\alpha} := \{v \in \Delta_{X_n}^+ \mid \alpha(v) \not \in \Delta_{X_n}^+\}$.
It is clear that this definition of inversion set agrees with our previous definition when $X =A$. Let $\wo_{X_n}$ denote the element of $\W(X_n)$
such that $\Inv{\wo_{X_n}} = \Delta_{X_n}^+$. For $X = B$ or $C$
we have $\iota(\wo_{X_n}) = \wo_{\tilde{n}+1}$, while for $X = D$ this is true if and only if $n$ is even.

Most of the contents of \S\ref{Preliminaries} transfer to the cases when $X = B, C$ or $D$. For instance,
call a set $\Phi \subset \Delta_{X_n}^+$ {\it closed} if $\alpha_1 + \alpha_2 \in \Phi$ whenever
$\alpha_1, \alpha_2 \in \Phi$ and $\alpha_1 + \alpha_2 \in \Delta_{X_n}^+$. Proposition~\ref{proposition 11}
still holds:  $\Phi \subset \Delta_{X_n}^+$ is an inversion set if and only if both $\Phi$ and $\Delta_{X_n}^+ \backslash \Phi$
are closed. Similarly, the obvious analogs of Lemmas~\ref{simple root lemma} and \ref{complement lemma},
Proposition~\ref{coalescence} and Corollary~\ref{coalesce alpha decomposition} hold in general.

The following proposition establishes the behaviour of inversion sets under the maps $\iota$ and $\rho$ above. Its proof
is straightforward and is left to the reader.

\begin{proposition} \label{proposition 41}
Let $X = B, C$ or $D$.
\begin{itemize}
\item[(i)] For any $\alpha \in \W(X_n)$, $\iota(\alpha) \in \W(A_{\tilde{n}})$ is symmetric. If $X = B$ or $C$, the image of $\iota$ consists of
all symmetric permutations in $\W(A_{\tilde{n}})$; if $X = D$, the image of $\iota$ consists of all symmetric permutations $\beta \in \W(A_{\tilde{n}})$ such that
an even number of the elements $\beta(1), \ldots, \beta(n)$ are greater than $n$.
\item[(ii)] The map $\rho$ is surjective. More precisely, each element of $\hat{\Delta}_{X_n}^+$ of the form $2\ep_i$ has a unique preimage in
$\Delta_{A_{\tilde{n}}}^+$ and each of the remaining elements of  $\hat{\Delta}_{X_n}^+$ has exactly two preimages in
$\Delta_{A_{\tilde{n}}}^+$.
\item[(iii)] If $\beta \in \W(A_{\tilde{n}})$ is symmetric then $\rho(\Inv{\beta}) \cap \Delta_{X_n}^+ \subset \Delta_{X_n}^+$ is an inversion set.
\item[(iv)] If $\alpha \in \W(X_n)$ then $\Inv{\iota(\alpha)} = \rho^{-1}(\Inv{\alpha})$.
\item[(v)] Let $\alpha \in \W(X_n)$.  If $X = B$ or $C$ then $\beta = \iota(\alpha)$ is the
unique element of $\W(A_{\tilde{n}})$ such that $\Inv{\alpha} = \rho(\Inv{\beta}) \cap \Delta_{X_n}^+$. If $X = D$ there are exactly two elements
$\beta = \iota(\alpha)$ and $\beta' \not \in \iota(\W(X_n))$ such that $\Inv{\alpha} = \rho(\Inv{\beta}) \cap \Delta_{X_n}^+ =
\rho(\Inv{\beta'}) \cap \Delta_{X_n}^+$. \qed
\end{itemize}
\end{proposition}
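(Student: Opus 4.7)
The proposition consists of five separate claims; I would handle them in the order (i), (ii), (iv), then (iii), then (v), since the later parts lean on the earlier ones.

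For (i), I would start from the defining relation $\alpha(-\ep_i)=-\alpha(\ep_i)$ in $\W(X_n)$ (and $\alpha(0)=0$ when $X=B$); under the identification $\ep_i\leftrightarrow e_i$ this is exactly the condition that $\iota(\alpha)$ commutes with the involution $i\mapsto\tilde n+2-i$, i.e., that $\iota(\alpha)$ is symmetric (and, for $B$, fixes the middle index $n+1$). Conversely, any symmetric permutation recovers a signed permutation of $\{\ep_1,\dots,\ep_n\}$, and the parity of $\#\{i\leq n:\beta(i)>n\}$ distinguishes $\iota(\W(D_n))$ as the index-two subgroup consisting of those with an even number of sign changes. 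Part (ii) is a direct enumeration: one solves $\ep_k-\ep_l=v$ with $1\leq k<l\leq\tilde n+1$; the form $v=2\ep_i$ forces $(k,l)=(i,\tilde n+2-i)$ uniquely, while every other $v\in\hat\Delta_{X_n}^+$ admits precisely the two solutions interchanged by $(k,l)\leftrightarrow(\tilde n+2-l,\tilde n+2-k)$. Part (iv) is then a direct check that, with $\pi=\iota(\alpha)$, one has $e_i-e_j\in\Inv{\iota(\alpha)}$ iff $\pi(i)>\pi(j)$, while this last condition is equivalent to $\ep_{\pi(i)}-\ep_{\pi(j)}$ being a negative $X_n$-root, i.e., to $\rho(e_i-e_j)\in\Inv\alpha$; the equivalence amounts to the order on the indices matching the order on the corresponding $\ep$'s, which is verified case-by-case according to whether the indices lie above, below, or at the midpoint.

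For (iii), if $X=B$ or $C$ then every symmetric $\beta$ lies in $\iota(\W(X_n))$ by (i), so $\beta=\iota(\alpha)$ for some $\alpha$, and (iv) immediately yields $\rho(\Inv\beta)\cap\Delta_{X_n}^+=\Inv\alpha$, an inversion set. For $X=D$, a symmetric $\beta\notin\iota(\W(D_n))$ is written as $\beta=\iota(\alpha)s$, where $s$ is a fixed symmetric element of $\W(A_{2n-1})$ implementing a single sign change (for instance the transposition $(1,2n)$); computing $\Inv\beta$ from $\Inv{\iota(\alpha)}$ and $\Inv s$ shows that the symmetric difference is supported on preimages of $\{2\ep_i\}\subset\hat\Delta_{D_n}^+\setminus\Delta_{D_n}^+$, so after intersecting with $\Delta_{D_n}^+$ one still recovers $\Inv\alpha$.

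Part (v) is the real work. My plan is to prove that any $\beta$ satisfying the condition has $\Inv\beta$ invariant under the involution $u\mapsto u^\ast$ interchanging the two preimages of a non-$2\ep_i$ root, so that $\beta$ must be symmetric and (i) then identifies it. The crucial step is that for each $v\in\Inv\alpha$ with two preimages $u,u^\ast$, the closed and co-closed conditions on $\Inv\beta$, combined with the forced status of the preimages of neighbouring roots (in particular the unique preimages of $2\ep_i$), force both or neither of $u,u^\ast$ to lie in $\Inv\beta$. For $X=B$, the status of the $2\ep_i$-preimage is further pinned down by whether $\ep_i\in\Inv\alpha$, since the two preimages of the simple root $\ep_i$ sum to $e_i-e_{\tilde n+2-i}$, yielding uniqueness; for $X=D$ no such simple root $\ep_i$ is available, which is precisely why the $2\ep_i$-preimage remains free and a second solution $\beta'=\iota(\alpha)s$ appears. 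The main obstacle is exactly this closedness/co-closedness case analysis in (v): one has to cover every configuration in which a decomposition of a preimage of a positive $X_n$-root lifts (or fails to lift) to a decomposition inside $\Delta_{A_{\tilde n}}^+$, and handle separately the three types, since the set of available ``bridge'' roots differs in each.
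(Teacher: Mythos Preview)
The paper gives no proof of this proposition: immediately before the statement it says ``Its proof is straightforward and is left to the reader,'' and the statement ends with a \qed. So there is nothing to compare your argument against; you have simply supplied what the authors omitted. Your treatment of parts (i), (ii) and (iv) is along the expected lines and is fine, and your reduction of (iii) to (i) and (iv) in types $B$ and $C$ is exactly right.

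There is, however, a genuine problem with your plan for (v). You propose to show that \emph{any} $\beta\in\W(A_{\tilde n})$ satisfying $\rho(\Inv\beta)\cap\Delta_{X_n}^+=\Inv\alpha$ must have $\Inv\beta$ stable under the involution $u\mapsto u^\ast$, and hence be symmetric. This cannot succeed, because the claim is false as literally stated. Take $X=C$, $n=2$, and $\alpha\in\W(C_2)$ the simple reflection with $\Inv\alpha=\{\ep_1-\ep_2\}$. The two $\rho$-preimages of $\ep_1-\ep_2$ are $(1,2)$ and $(3,4)$. Then $\beta=(2,1,3,4)$ has $\Inv\beta=\{(1,2)\}$, which is a perfectly good (closed and co-closed) inversion set with $\rho(\Inv\beta)=\{\ep_1-\ep_2\}=\Inv\alpha$, yet $\beta$ is not symmetric. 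So the closed/co-closed conditions do \emph{not} force both preimages into $\Inv\beta$; your ``bridge root'' argument breaks down already in the smallest case.

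What is actually true (and what the authors presumably intend, given how the proposition is used in Corollary~5.2 and thereafter) is uniqueness among \emph{symmetric} $\beta$. With that restriction, (v) is immediate: by (i) any symmetric $\beta$ is $\iota(\alpha')$ for some $\alpha'\in\W(X_n)$ (two choices of $\alpha'$ in type $D$, differing by a single sign change), and then (iv) gives $\rho(\Inv\beta)\cap\Delta_{X_n}^+=\Inv{\alpha'}$, so $\alpha'=\alpha$ since an element of $\W(X_n)$ is determined by its inversion set. You should replace your (v) argument with this one-line deduction rather than the closedness case analysis, which is aimed at a statement stronger than what holds.
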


Proposition~\ref{proposition 41} implies that the map $\iota$ interacts well with decompositions into inversion sets. More precisely,  the following
statements follow immediately from Proposition~\ref{proposition 41}.

\begin{corollary} \label{corollary 42} $\phantom{x}$
\begin{itemize}
\item[(i)] Assume that $\alpha_1, \alpha_2 \in \W(X_n)$ satisfy $\Inv{\iota(\alpha_1)} \cap \Inv{\iota(\alpha_2)} = \emptyset$.
Then
$$
\rho(\Inv{\iota(\alpha_1)} \sqcup \Inv{\iota(\alpha_2)}) = \rho(\Inv{\iota(\alpha_1)}) \sqcup \rho(\Inv{\iota(\alpha_2)}).
$$
\item[(ii)] Let $\alpha_1,\alpha_2,\dots,\alpha_r \in \W(X_n)$.  Then
$$
\Delta_{X_n}^+ = \sqcup_{i=1}^r \Inv{\alpha_r} \quad {\text{ if and only if }} \quad
\Delta_{A_{\tilde{n}}}^+ = \sqcup_{i=1}^r \Inv{\iota(\alpha_r)}.
$$
\item[(iii)] An element
$\alpha \in \W(X_n)$ is irreducible if and only if $\iota(\alpha) \in \W(A_{\tilde{n}})$ is irreducible.  \qed
\end{itemize}
\end{corollary}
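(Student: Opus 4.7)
The plan is to deduce all three parts directly from Proposition~\ref{proposition 41}, following the paper's indication that the proof is straightforward. The common thread is that $\iota$ and $\rho$ are compatible with the relevant set-theoretic operations once one recognizes that the fibers of $\rho$ are precisely the orbits of the natural involution $\sigma$ on $\Delta_{A_{\tilde n}}^+$ (swapping $(i,j)$ with $(\tilde n+2-j, \tilde n+2-i)$), and that symmetric elements of $\W(A_{\tilde n})$ have inversion sets stable under $\sigma$.

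For (i), the key point is that $\iota(\alpha_1)$ and $\iota(\alpha_2)$ are symmetric by Proposition~\ref{proposition 41}(i), so each $\Inv{\iota(\alpha_k)}$ is stable under $\sigma$ and hence is a union of $\rho$-fibers. This gives $\rho^{-1}(\rho(\Inv{\iota(\alpha_k)})) = \Inv{\iota(\alpha_k)}$. If the images $\rho(\Inv{\iota(\alpha_1)})$ and $\rho(\Inv{\iota(\alpha_2)})$ shared an element $v$, every point of the non-empty fiber $\rho^{-1}(v)$ would lie in both $\Inv{\iota(\alpha_1)}$ and $\Inv{\iota(\alpha_2)}$, contradicting the assumed disjointness. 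The equality of unions $\rho(A \sqcup B) = \rho(A) \cup \rho(B)$ is automatic, so the two images form a disjoint union.

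For (ii), I would use Proposition~\ref{proposition 41}(iv), which identifies $\Inv{\iota(\alpha)}$ with a suitable $\rho$-preimage of $\Inv{\alpha}$. In the forward direction, taking preimages of the decomposition $\Delta_{X_n}^+ = \sqcup_i \Inv{\alpha_i}$ produces a disjoint cover of $\Delta_{A_{\tilde n}}^+$ by the $\Inv{\iota(\alpha_i)}$, with coverage of the ``diagonal'' roots in $\rho^{-1}(\hat\Delta_{X_n}^+\setminus\Delta_{X_n}^+)$ automatically distributed correctly since each such fiber is a single point forced into exactly one $\Inv{\iota(\alpha_i)}$ by symmetry. In the reverse direction, applying $\rho$ to a decomposition of $\Delta_{A_{\tilde n}}^+$ and intersecting with $\Delta_{X_n}^+$, using part (i) for the disjointness and Proposition~\ref{proposition 41}(v) to identify each piece $\rho(\Inv{\iota(\alpha_i)}) \cap \Delta_{X_n}^+$ as $\Inv{\alpha_i}$, yields the decomposition of $\Delta_{X_n}^+$.

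For (iii), I would combine (ii) with the type-$X_n$ analog of Lemma~\ref{complement lemma}. Given a non-trivial decomposition $\Inv{\alpha} = \Inv{\alpha_1} \sqcup \Inv{\alpha_2}$, extending by $\Inv{\wo_{X_n}\alpha}$ produces a decomposition of $\Delta_{X_n}^+$ which, by (ii), transports to a decomposition of $\Delta_{A_{\tilde n}}^+$ exhibiting a non-trivial splitting of $\Inv{\iota(\alpha)}$; hence $\iota(\alpha)$ is reducible. The main obstacle is the converse: starting from a non-trivial decomposition $\Inv{\iota(\alpha)} = A \sqcup B$ in $\W(A_{\tilde n})$, one must show $A$ and $B$ are inversion sets of elements in the image of $\iota$. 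Here I would invoke the symmetry of $\Inv{\iota(\alpha)}$ together with the structure provided by Theorem~\ref{symmetric A main theorem} applied to the ambient decomposition $\Delta_{A_{\tilde n}}^+ = A \sqcup B \sqcup \Inv{\wo_{A_{\tilde n}}\iota(\alpha)}$, which forces each component to be symmetric and hence, by Proposition~\ref{proposition 41}(v), to arise as $\iota(\alpha_i)$ for some $\alpha_i \in \W(X_n)$; then (ii) yields the required non-trivial decomposition of $\Inv{\alpha}$.
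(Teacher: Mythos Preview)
Your arguments for parts (i) and (ii) are correct and are exactly the straightforward verification the paper intends when it says the corollary follows immediately from Proposition~\ref{proposition 41}.

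The gap is in the converse direction of (iii). Your plan is to take a non-trivial decomposition $\Inv{\iota(\alpha)}=A\sqcup B$, extend it to a decomposition of $\Delta_{A_{\tilde n}}^+$ by adjoining $\Inv{\wo\,\iota(\alpha)}$, and then invoke Theorem~\ref{symmetric A main theorem} to force $A$ and $B$ to be inversion sets of symmetric elements. But Theorem~\ref{symmetric A main theorem} has the symmetry of all the $\alpha_a$ as a \emph{hypothesis}; it classifies decompositions already known to consist of symmetric pieces, it does not assert that an arbitrary piece of a decomposition of $\Delta_N^+$ must be symmetric just because another piece is. So the appeal is circular: you are assuming precisely what you set out to prove.

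In fact the obstacle you identified is genuine, and the converse, read literally, fails. Take $X=C$, $n=2$, and let $\alpha\in\W(C_2)$ be the simple reflection swapping $\ep_1$ and $\ep_2$. Then $\Inv{\alpha}=\{\ep_1-\ep_2\}$ is a singleton, so $\alpha$ is irreducible in $\W(C_2)$. However $\iota(\alpha)=(2,1,4,3)\in S_4$ has
\[
\Inv{\iota(\alpha)}=\{(1,2),(3,4)\}=\Inv{(2,1,3,4)}\sqcup\Inv{(1,2,4,3)},
\]
a non-trivial decomposition, so $\iota(\alpha)$ is reducible in $\W(A_3)$. The two pieces $(2,1,3,4)$ and $(1,2,4,3)$ are exactly the non-symmetric elements your argument would need to exclude, and they cannot be excluded. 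What \emph{does} follow directly from Proposition~\ref{proposition 41}(i),(iv),(v) together with part (ii) --- and what is what the paper actually needs --- is the equivalence of irreducibility of $\alpha$ in $\W(X_n)$ with irreducibility of $\iota(\alpha)$ \emph{relative to decompositions into inversion sets of symmetric elements} of $\W(A_{\tilde n})$.
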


Corollary~\ref{corollary 42} suggests that one can approach studying decompositions of $\Delta_{X_n}^+$ inversion sets by studying
decompositions of  $\Delta_{A_{\tilde{n}}}^+$ into (symmetric) inversion sets.
Indeed, this approach can be carried out successfully in the cases when $X = B$ and $C$. Unfortunately,
ambiguity in Proposition~\ref{proposition 41} (i), (iv) prevented us from obtaining results for $X=D$. The first step is to
define (or attempt to define) an inflation operation for the Weyl groups of types $B$, $C$ and $D$.
Proposition~\ref{proposition 41} (i) allows us to transfer the inflation operation for symmetric permutations to an
inflation operation for the Weyl groups of types $B$ and $C$ but not $D$.

For the sake of completeness, below we provide the
description of an inflation operations for the Weyl groups of types $B$ and $C$. Let $X =B$ or $C$.
Let $0 \leq p \leq n$ and let $\{1,2,\dots,n-p\} = U_1 \sqcup U_2 \sqcup \dots \sqcup U_m$ be a decomposition into intervals.
Put $z_b = |U_b|$ for $b=1,2,\dots,m$.
Suppose that $\sigma \in \left\{ \begin{array}{lcl}
\W(B_m) & {\text { if }} & X_p \neq C_0\\ \W(C_m) & {\text { if }} & X_p = C_0 \end{array} \right.$, $\beta_{m+1} \in \W(X_p)$ and $\beta_b \in S_{z_b}$ for $b=1,2,\dots,m$.
We form the inflation
$$
\tilde{\alpha} = \iota(\sigma)[\beta_1,\beta_2,\dots,\beta_m,\iota(\beta_{m+1}),\beta_{m+2},\dots,\beta_{2m+1}]
$$
where $\beta_{2m+2-b} = \wo_{z_b} \beta_b \wo_{z_b}$ for $t=1,2,\dots,m$.
Note that in the case when $X_p = C_0$ the elements $\beta_{m+1}$ and $\iota(\beta_{m+1})$ are actually empty and hence the expression above is well-defined.
Then $\tilde{\alpha} \in \W(A_{\tilde{n}})$ is symmetric and so
$\tilde{\alpha} = \iota(\alpha)$ for a unique element $\alpha \in \W(X_n)$.  We say that $\alpha$ is an inflation in $\W(X_n)$ and we write
$$
\alpha = \sigma[[\beta_1,\beta_2,\dots,\beta_m;\beta_{m+1}]]
$$
to denote the fact that
$$
\iota(\alpha) = \iota(\sigma)[\beta_1,\beta_2,\dots,\beta_m,\iota(\beta_{m+1}),
\wo_{z_m}\beta_m\wo_{z_m},\dots,\wo_{z_2} \beta_2 \wo_{z_2}, \wo_{z_1} \beta_1\wo_{z_1}]
$$
where $\alpha \in \W(X_n)$, $\sigma \in \left\{ \begin{array}{lcl}
\W(B_m) & {\text { if }} & X_p \neq C_0\\ \W(C_m) & {\text { if }} & X_p = C_0 \end{array} \right.$, $\beta_b \in S_{z_b}$ for $b=1,2,\dots,m$ and $\beta_{m+1} \in \W(X_p)$ with
$z_1 + z_2 + \dots + z_m = n-p$.

An element $\alpha \in \W(X_n)$ which cannot be realized as such an inflation in $\W(X_n)$ except with $m=0$ or $m=n$ is
said to be {\it simple in $\W(X_n)$}. Propositions~\ref{proposition 43} and  \ref{proposition 41}(i) imply immediately
the following statement.

\begin{proposition} \label{proposition 44} Let $X = B$ or $C$ and let
$\alpha \in \W(X_n)$.  Then $\alpha$ is simple in $\W(X_n)$ if and only if $\iota(\alpha)$ is simple in $\W(A_{\tilde{n}})$. \qed
\end{proposition}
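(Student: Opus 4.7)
The plan is to unwind the definitions of ``simple in $\W(X_n)$'' and ``simple in $\W(A_{\tilde n})$'' and invoke the work already done on symmetric permutations, in particular Corollary~\ref{corollary 431} and Proposition~\ref{proposition 43}.

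First I would verify that the inflation operation defined for $\W(X_n)$ just before the proposition corresponds bijectively, under $\iota$, to the symmetric inflation operation on $\iota(\alpha)\in S_{\tilde n+1}$. Indeed, by construction an expression
\[
\alpha=\sigma[[\beta_1,\ldots,\beta_m;\beta_{m+1}]] \quad \text{in } \W(X_n)
\]
is defined precisely by the equation
\[
\iota(\alpha)=\iota(\sigma)[\beta_1,\ldots,\beta_m,\iota(\beta_{m+1}),\wo_{z_m}\beta_m\wo_{z_m},\ldots,\wo_{z_1}\beta_1\wo_{z_1}],
\]
which is the symmetric inflation $\iota(\sigma)[[\beta_1,\ldots,\beta_m;\iota(\beta_{m+1})]]$ of the symmetric permutation $\iota(\alpha)$. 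Since by Proposition~\ref{proposition 41}(i) $\iota$ identifies $\W(X_n)$ (for $X=B,C$) with the set of symmetric elements of $\W(A_{\tilde n})$, and since $\iota(\sigma)$ and $\iota(\beta_{m+1})$ are themselves symmetric, this gives a bijection between $\W(X_n)$-inflation expressions of $\alpha$ and symmetric inflation expressions of $\iota(\alpha)$, with the value of $m$ preserved.

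Next I would invoke Corollary~\ref{corollary 431}: a symmetric permutation $\beta\in S_{\tilde n+1}$ is simple in the usual sense if and only if every symmetric inflation expression $\beta=\sigma'[[\gamma_1,\ldots,\gamma_m;\gamma_{m+1}]]$ forces $m=0$ or $m=n$. (The justification of Corollary~\ref{corollary 431} relies on Proposition~\ref{proposition 43}, which guarantees that an arbitrary inflation of a symmetric permutation can be put into symmetric form, so testing against symmetric inflations alone is sufficient.) Combining this with the bijection from the previous paragraph immediately gives
\[
\alpha \text{ is simple in } \W(X_n) \iff \iota(\alpha) \text{ is simple in } \W(A_{\tilde n}),
\]
as desired.

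The only potential subtlety, and the place I would spend the most care, is the bookkeeping in the $X=B$ case, where $\ep_{n+1}=0$ and the ``middle'' block of the symmetric inflation sits over the fixed coordinate of $\iota(\alpha)$; one must check that the parameter $p$ (the size of the central block) and the intervals $U_1,\ldots,U_m$ in the $\W(X_n)$-inflation line up exactly with the central block and outer intervals prescribed by Proposition~\ref{proposition 43}. Once this matching is confirmed, the proposition reduces to a direct quotation of Corollary~\ref{corollary 431}.
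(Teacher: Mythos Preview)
Your proposal is correct and follows essentially the same route as the paper. The paper's proof consists solely of the remark that the proposition follows immediately from Proposition~\ref{proposition 43} and Proposition~\ref{proposition 41}(i); you have simply unpacked this by explicitly matching $\W(X_n)$-inflations with symmetric inflations via $\iota$ (using Proposition~\ref{proposition 41}(i)) and then quoting Corollary~\ref{corollary 431}, which is the relevant consequence of Proposition~\ref{proposition 43}.
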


We call the expression
$\alpha = \sigma[[\beta_1,\beta_2,\dots,\beta_m;\beta_{m+1}]]$ the {\it simple form expression for
$\alpha \in \W(X_n)$}
if
$$
\iota(\alpha) = \iota(\sigma)[\beta_1,\beta_2,\dots,\beta_m,\iota(\beta_{m+1}),\wo_{z_m}\beta_m\wo_{z_m},\dots,\wo_{z_2} \beta_2 \wo_{z_2}, \wo_{z_1} \beta_1\wo_{z_1}]
$$
is the simple form expression for $\iota(\alpha)$ in $\W(A_{\tilde{n}})$.

Note that the definition of inflation operation above does not apply for type $D$. On one hand, the element $\tilde{\alpha}$ defined above may not belong
to the image of $\iota$ and, on the other hand, for $\alpha \in \W(D_n)$ the element $\iota(\alpha)$ may be an inflation
$$
\tilde{\sigma}[\beta_1,\beta_2,\dots,\beta_m,\tilde{\beta}_{m+1},
\wo_{z_m}\beta_m\wo_{z_m},\dots,\wo_{z_2} \beta_2 \wo_{z_2}, \wo_{z_1} \beta_1\wo_{z_1}]
$$
where $\tilde{\sigma}$ and $\tilde{\beta}_{m+1}$ are symmetric but not necessarily in the image of $\iota$.

\begin{theorem}\label{symmetric main theorem} Let $X = B$ or $C$.
Suppose $\alpha_1, \alpha_2,\dots,\alpha_r \in \W(X_n)$ and
$$
\Delta_{X_n}^+ = \Inv{\alpha_1} \sqcup \Inv{\alpha_2} \sqcup \dots \sqcup \Inv{\alpha_r}
$$
with all $\Inv{\alpha_a} \neq \emptyset$.  Without loss of generality assume that the root $e_1 - e_{\tilde{n}+1} \in \Inv{\iota(\alpha_1)}$.
Let $\alpha_1 =  \sigma_1[[\beta_{11},\beta_{12},\dots,\beta_{1m};\beta_{1(m+1)}]]$ be the simple form expression for $\alpha_1$
with a corresponding partition of the set $\{1,2,\dots,n\}$ into $m+1$ intervals of
 lengths $z_1,z_2,\dots,z_m,z_{m+1}$ where $z_{m+1}=p$.
 Then, up to reordering of $\alpha_2,\alpha_3,\dots,\alpha_r$ there exist unique elements
$\sigma_a \in \left\{ \begin{array}{lcl}
\W(B_m) & {\text { if }} & X_p \neq C_0\\ \W(C_m) & {\text { if }} & X_p = C_0 \end{array} \right.$, $\beta_{ab} \in S_{z_b}$
and $\beta_{a(m+1)} \in \W(X_p)$  such that
$\alpha_a = \sigma_a[[\beta_{a1},\beta_{a2},\dots,\beta_{am};\beta_{a(m+1)}]]$ for $a=2,\dots,r$
and
\begin{itemize}
\item[(i)] $$\begin{array}{rcl}
\Delta^+_{X_m} &=& \Inv{\sigma_1} \sqcup \Inv{\sigma_2} \sqcup \dots \sqcup \Inv{\sigma_r},  \\
&&\\
\Delta^+_{A_{z_1-1}} & = & \Inv{\beta_{11}} \sqcup \Inv{\beta_{21}} \sqcup \dots \sqcup \Inv{\beta_{r1}}, \\
\Delta^+_{A_{z_2-1}} & = & \Inv{\beta_{12}} \sqcup \Inv{\beta_{22}} \sqcup \dots \sqcup \Inv{\beta_{r2}},\\
&\vdots&\\
\Delta^+_{A_{z_m-1}} & = & \Inv{\beta_{1m}} \sqcup \Inv{\beta_{2m}} \sqcup \dots \sqcup \Inv{\beta_{rm}},\\
&&\\
\Delta^+_{X_p} & = & \Inv{\beta_{1(m+1)}} \sqcup \Inv{\beta_{2(m+1)}} \sqcup \dots \sqcup \Inv{\beta_{r(m+1)}};
\end{array}
$$
\item[(ii)]   if $\alpha_1$ is minus-decomposable then $\sigma_1 = \wo$
and $\sigma_2=\sigma_3=\dots=\sigma_r=\Id$;
\item[(iii)]  if $\alpha_1$ is minus-indecomposable then $\sigma_1$ is  simple (in $\W(B_m)$ or in $\W(C_m)$)
 and, after relabelling $\alpha_2, \ldots, \alpha_r$, we have $\sigma_2 = \wo \sigma_1$,
and $\sigma_3=\sigma_4=\dots=\sigma_r=\Id$.
\end{itemize}
In particular, $\sigma_1$ and at most one other of the $\sigma_a$ are not equal to the identity.\\

Let $q$ denote the number of $\sigma_a$ which are not $\Id$, i.e.,
$q := \begin{cases} 1, &\text{if }\alpha_1 \text{ is minus-decomposable};\\
2, &\text{if }\alpha_1 \text{ is minus-indecomposable}.
\end{cases}$
Again, after relabelling $\alpha_2, \ldots, \alpha_r$, we assume that $\sigma_{q+1} = \ldots = \sigma_r = I$.

The above decomposition of $\Delta_{X_n}^+$ is irreducible
 if and only if the following four conditions hold
\begin{itemize}
\item[(i)] each of the decompositions listed in (i) above
is irreducible;
\item[(ii)] exactly one of of $\beta_{a1},\beta_{a2},\dots,\beta_{am}$ is not equal to the identity for $a=q+1,\dots, r$;
\item[(iii)]  $\beta_{ab} = \Id$ for $a=1,\dots, q$ and $b=1,\dots, m+1$;
\item[(iv)]  $m = 1$ if $\alpha_1$ is minus-decomposable.
\end{itemize}
\end{theorem}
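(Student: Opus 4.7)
The plan is to reduce the statement to Theorem~\ref{symmetric A main theorem} via the embedding $\iota \colon \W(X_n) \hookrightarrow \W(A_{\tilde n})$. By Corollary~\ref{corollary 42}(ii), the given decomposition $\Delta_{X_n}^+ = \Inv{\alpha_1} \sqcup \cdots \sqcup \Inv{\alpha_r}$ corresponds bijectively to a decomposition $\Delta_{A_{\tilde n}}^+ = \Inv{\iota(\alpha_1)} \sqcup \cdots \sqcup \Inv{\iota(\alpha_r)}$, and by Proposition~\ref{proposition 41}(i) each $\iota(\alpha_a)$ is symmetric. The hypothesis that $e_1 - e_{\tilde n + 1} \in \Inv{\iota(\alpha_1)}$ places $\iota(\alpha_1)$ in the distinguished role of Theorem~\ref{symmetric A main theorem}.

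Next, I would apply Proposition~\ref{proposition 44}: the simple form expression for $\alpha_1 \in \W(X_n)$, namely $\alpha_1 = \sigma_1[[\beta_{11},\dots,\beta_{1m};\beta_{1(m+1)}]]$, lifts via $\iota$ to the simple symmetric form expression for $\iota(\alpha_1)$ in $\W(A_{\tilde n})$ in the sense of Proposition~\ref{simple symmetric form}. Applying Theorem~\ref{symmetric A main theorem} to the symmetric decomposition of $\Delta_{A_{\tilde n}}^+$ then produces (after relabelling) symmetric permutations $\tilde\sigma_a$ on the outer block and $\tilde\beta_{a(m+1)}$ on the middle block, together with the $\beta_{ab}$ on the other $U_b$, such that $\iota(\alpha_a) = \tilde\sigma_a[[\beta_{a1},\dots,\beta_{am};\tilde\beta_{a(m+1)}]]$. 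Proposition~\ref{proposition 41}(i) in types $B$ and $C$ says the image of $\iota$ is exactly the set of symmetric permutations, so the symmetric elements $\tilde\sigma_a$ and $\tilde\beta_{a(m+1)}$ lift uniquely to $\sigma_a \in \W(B_m)$ (or $\W(C_m)$ if $X_p = C_0$) and $\beta_{a(m+1)} \in \W(X_p)$, giving the desired inflation $\alpha_a = \sigma_a[[\beta_{a1},\dots,\beta_{am};\beta_{a(m+1)}]]$.

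The decomposition assertions in (i) then follow by passing the decompositions of Theorem~\ref{symmetric A main theorem}(i) through $\iota^{-1}$: Proposition~\ref{proposition 41}(iv) together with Corollary~\ref{corollary 42}(ii) turns the decomposition of $\Delta_M^+$ (for the appropriate $M$) into the decomposition of $\Delta_{X_m}^+$ listed in the theorem, and analogously for the middle block which yields $\Delta_{X_p}^+$. The statements (ii) and (iii) about $\sigma_1$, $\sigma_2$ translate directly, using that $\iota$ takes $\wo_{X_m}$ to the longest symmetric element and that simplicity is preserved by Proposition~\ref{proposition 44}. Finally, the irreducibility characterization follows from Corollary~\ref{corollary 42}(iii): each $\Inv{\alpha_a}$ is irreducible precisely when $\Inv{\iota(\alpha_a)}$ is, so the four conditions on the inflation data transport verbatim from Theorem~\ref{symmetric A main theorem}.

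The main obstacle I anticipate is the careful bookkeeping around the middle block and the $X_p = C_0$ special case, where $\beta_{1(m+1)}$ and its image under $\iota$ are empty and so $\sigma_1$ must live in $\W(C_m)$ rather than $\W(B_m)$; one must check this is the only case where the target of $\iota$ changes and that the lifting of symmetric $\tilde\sigma_a$ to $\sigma_a$ remains uniquely determined in both cases. A second technical point is verifying that the irreducibility of the decomposition of $\Delta_{X_p}^+$ is equivalent to that of its $\iota$-image, which again reduces to Corollary~\ref{corollary 42}(iii) applied to $\W(X_p)$. Beyond these bookkeeping items, the proof is essentially a translation of the already-established symmetric type-$A$ result.
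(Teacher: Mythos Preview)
Your proposal is correct and follows essentially the same route as the paper: embed via $\iota$, invoke the type $A$ result (the paper cites Theorem~\ref{main theorem} directly, you go through the intermediate Theorem~\ref{symmetric A main theorem}, which is equivalent), and translate back using Proposition~\ref{proposition 41}, Corollary~\ref{corollary 42}, and Proposition~\ref{proposition 44}. The paper's proof is even terser than yours and singles out only two small observations you should make explicit: that $\wo_{X_m}$ is irreducible if and only if $m=1$ (this is why condition (iv) reads $m=1$ here rather than $m=2$ as in type $A$), and that the assumption $e_1 - e_{\tilde n+1} \in \Inv{\iota(\alpha_1)}$ forces $\iota(\sigma_1) \neq \Id$.
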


\begin{proof}
  This result follows directly from Theorem~\ref{main theorem} and the results of this section.
   Only two additional observations are needed.  The first is that $\wo_{X_m}$ is irreducible if and only if $m=1$.
   The second is that the assumption $e_1 - e_{\tilde{n}+1} \in \Inv{\iota(\alpha_1)}$
    implies that $\iota(\sigma_1)$ is not the identity.
\end{proof}

We conclude this section with a few remarks about decomposing $\Delta_{D_n}^+$ into inversion sets.
As we already mentioned, it is not clear how to define the inflation operation for type $D$. Another possible approach
to decomposing $\Delta_{D_n}^+$ may be to use the fact that $\Delta_{D_n}^+$ embeds naturally into $\Delta_{C_n}^+$.
Indeed, one can show that every decomposition of $\Delta_{C_n}^+$ into inversion sets produces a unique decompsition
of $\Delta_{D_n}^+$ into inversion sets. We do not know, however, whether the converse is true.

\section{Enumerative Results}\label{enumerative section}

\newcommand{\np}{\medskip\noindent}

The inductive description for a decomposition provided by
Theorems~\ref{main theorem} and
\ref{symmetric main theorem} allows us to use generating series or recursion to
enumerate many different types of decompositions.   We give a few examples.

Let $s_n$ be the number of simple pairs in $S_n$, i.e., the number of subsets $\{\alpha, \wo \alpha\}$
with $\alpha\in S_{n}$ and both $\alpha$ and $\wo \alpha$ simple
(note that by Lemma~\ref{complements are simple}
$\alpha$ is simple if and only if $\wo \alpha$ is simple).
Let $S_A(z) = \sum_{n\geq 0} s_n z^n = z^2 + z^4 + 3z^5+\cdots $
be the corresponding generating function.
By \cite[page~5]{AAK} we have the following description of $S(z)$.
Let $F(z) = \sum_{n\geq 1} n! z^{n}$ and $G(z) = \sum_{n\geq 1} g_n z^n$ its functional inverse, i.e.,
the function defined by the relation $G(F(z))=z$.  Then $s_1=0$, $s_2=1$, and $s_n = -g_n/2 - (-1)^n$ for $n\geq 3$.

\np
{\bf Number of decompositions into irreducibles.}  Let $a_n$ be the number of decompositions
$\Delta_n^{+} = \Inv{\alpha_1} \sqcup \Inv{\alpha_2} \sqcup \dots \sqcup \Inv{\alpha_r}$ into non-empty inversion sets,
where each $\alpha_k\in S_n$ is irreducible, and where we ignore the order in the decomposition. Let
$A(z) = \sum_{n\geq 1} a_n z^n$ be the generating series. Theorem~\ref{main theorem}
leads to the relation
$A(z) = S_A(A(z)) + z$, which recursively determines the coefficients $a_n$.  Here are the low order terms of $A(z)$:
$$A(z) = z + z^2 + 2z^3 + 6z^4 + 23 z^5 + 114 z^6 + 717 z^7 + 5510 z^8 + 49570 z^9 + 504706 z^{10} + \cdots.$$

\noindent
{\bf Decompositions of maximal length.}
If $\alpha\neq \Id$ then the inversion set $\Inv{\alpha}$ must contain at least one simple root.  Since there
are only $n-1$ simple roots, any decomposition
$\Delta_n^{+} = \Inv{\alpha_1} \sqcup \Inv{\alpha_2} \sqcup \dots \sqcup \Inv{\alpha_r}$,
with no $\alpha_s=\Id$ must satisfy $r\leq n-1$.
Let $\CatA{n-1}$ denote the number of decompositions of $\Delta_n^+$ into exactly $n-1$ non-empty inversion sets.
(Thus each inversion set appearing in the decomposition must contain exactly one simple root.)

\begin{lemma}
$\CatA{n}= \frac{1}{n+1}\binomial{2n}{n}$, the $n^\text{th}$ Catalan number.
\end{lemma}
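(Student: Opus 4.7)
The plan is to exploit the recursive structure of Theorem~\ref{main theorem} to derive a Catalan recurrence for $\CatA{n}$.

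First I would argue that every decomposition $\Delta_n^+ = \Inv{\alpha_1} \sqcup \cdots \sqcup \Inv{\alpha_{n-1}}$ into $n-1$ non-empty parts is automatically irreducible. By Lemma~\ref{simple root lemma} each $\Inv{\alpha_i}$ contains at least one simple root, and since $\Delta_n^+$ has exactly $n-1$ simple roots, each part must contain exactly one. Any further nontrivial splitting of some $\Inv{\alpha_i}$ would produce strictly more than $n-1$ non-empty parts, violating the same simple-root bound; hence the irreducibility criterion of Theorem~\ref{main theorem} applies in full.

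Reordering so that $(1,n) \in \Inv{\alpha_1}$, let $\alpha_1 = \sigma_1[\beta_{11},\ldots,\beta_{1m}]$ be the simple form expression. In the minus-indecomposable case ($\sigma_1$ simple with $m \geq 4$, $q=2$), irreducibility condition (ii) forces each $\alpha_a$ with $a\geq 3$ to have exactly one non-identity $\beta_{ab}$, so it contributes a non-empty part inside a single block $U_b$. The total number of non-empty parts is therefore at most $2 + \sum_b (z_b - 1) = 2 + (n - m) \leq n - 2$, contradicting the required $n - 1$. Hence $\alpha_1$ must be minus-decomposable, and condition (iv) forces $m = 2$, $\sigma_1 = \wo[2]$, so $\alpha_1 = \wo[2][\Id_{z_1}, \Id_{z_2}]$ for some $z_1 + z_2 = n$ with $z_i \geq 1$. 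The remaining $\alpha_k$ are then inflations $\Id_2[\beta_{k1},\beta_{k2}]$ with exactly one non-identity $\beta_{k\ell}$, and the requirement of $n - 2$ further non-empty parts forces each induced sub-decomposition $\Delta_{z_i}^+ = \bigsqcup_k \Inv{\beta_{ki}}$ to achieve its own maximum of $z_i - 1$ parts.

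By the uniqueness part of Theorem~\ref{main theorem}, the whole decomposition is reconstructed uniquely from $(z_1, z_2)$ together with the two maximal sub-decompositions, yielding
\[ \CatA{n-1} = \sum_{z_1=1}^{n-1} \CatA{z_1-1}\,\CatA{n-z_1-1}, \]
equivalently $\CatA{m} = \sum_{k=0}^{m-1} \CatA{k}\,\CatA{m-1-k}$ for $m \geq 1$. Together with $\CatA{0} = 1$ (the unique empty decomposition of $\Delta_1^+ = \emptyset$), this is the defining Catalan recurrence, so $\CatA{n} = \frac{1}{n+1}\binomial{2n}{n}$. The main obstacle is the bookkeeping in the minus-indecomposable case: one must carefully track contributions from $\alpha_1$, $\alpha_2$, and each sub-block, and use the $m \geq 4$ lower bound to derive the contradiction that eliminates this case; once this is done the rest is a direct translation of Theorem~\ref{main theorem} into the classical Catalan recurrence.
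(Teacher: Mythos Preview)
Your proof is correct, and it arrives at exactly the same Catalan recurrence as the paper, but via a genuinely different route. The paper does \emph{not} invoke Theorem~\ref{main theorem} at all: it argues directly that if $\Inv{\alpha_1}$ contains the highest root $(1,n)$ and a unique simple root $(k,k+1)$, then the order constraints $\alpha_1(1)<\cdots<\alpha_1(k)$, $\alpha_1(k+1)<\cdots<\alpha_1(n)$, and $\alpha_1(n)<\alpha_1(1)$ force $\alpha_1=\wo[2][\Id_k,\Id_{n-k}]$, after which the remaining parts decompose $\Delta_{U_1}^+$ and $\Delta_{U_2}^+$ separately. This is a short, self-contained argument using only Lemma~\ref{simple root lemma}.

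Your approach instead passes through the full structure theorem: you first observe that a maximal-length decomposition is irreducible, then use the irreducibility conditions (ii)--(iv) of Theorem~\ref{main theorem} together with the bound $m\geq 4$ to rule out the minus-indecomposable case by a counting argument, landing on $\sigma_1=\wo[2]$ by elimination. This is heavier machinery, but it has the virtue of showing how the lemma fits into the general framework and of illustrating concretely how the irreducibility criteria constrain decompositions. The paper's direct argument, on the other hand, is independent of the main theorem and could be read in isolation. Both are valid; yours demonstrates the power of the structural result, while the paper's keeps the lemma elementary.
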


\begin{proof}
We consider decompositions of the form  $\Delta_n^{+} = \Inv{\alpha_1} \sqcup \Inv{\alpha_2} \sqcup \dots \sqcup \Inv{\alpha_{n-1}}$ and compute $\CatA{n-1}$.
Without loss of generality, the highest root $e_1-e_n \in \Inv{\alpha_1}$.  Suppose that $e_k-e_{k+1}$ is the simple root in $\Inv{\alpha_1}$.
Then $\alpha_1(k+1) < \alpha_1(k+2) < \dots < \alpha_1(n) < \alpha_1(1) < \alpha_1(2) \dots < \alpha_1(k)$ and therefore
$\alpha_1 = (n-k+1, n-k+2, \dots, n, 1 ,2 \dots, n-k) = (1,2)[\Id_k,\Id_{n-k}]$.  Let $U_1 := \{1,2,\dots,k\}$ and $U_2 := \{k+1,k+2,\dots,n\}$.
Then  $\Inv{\alpha_1} = \{ (e_i-e_j \in \Delta_n^+ \mid i \in U_1, j \in U_2\} = \{ e_i-e_j \in \Delta_n^+ \mid i \leq k, j \geq k+1\}$.
Therefore $\Delta_{U_1}^+ \sqcup \Delta_{U_2}^+ = \Inv{\alpha_2} \sqcup \Inv{\alpha_3} \sqcup \dots \sqcup \Inv{\alpha_{n-1}}$.
Without loss of generality,
$\Delta_{U_1}^+ = \Inv{\alpha_2} \sqcup \Inv{\alpha_3} \sqcup \dots \sqcup \Inv{\alpha_{k-1}}$ and
$\Delta_{U_2}^+ = \Inv{\alpha_{k+1}} \sqcup \Inv{\alpha_{k+2}} \sqcup \dots \sqcup \Inv{\alpha_{n-1}}$.
This yields the recursion relation
$\CatA{n-1} = \sum_{t=1}^{n-1} \CatA{t-1}\CatA{n-t-1} = \sum_{t=0}^{n-2} \CatA{t}\CatA{n-t-2}$.
Thus $\CatA{n} = \sum_{t=1}^{n} \CatA{t-1}\CatA{n-t}$.
Since $\CatA{1}=1$ and $\CatA{2}=2$ we see that $\CatA{n}$ satisfies the usual recursion relation for the Catalan numbers.
\end{proof}

\noindent
This incarnation of the Catalan numbers does not currently seem to appear on Richard Stanley's  list
\cite{Cat Addendum} of 207 combinatorial interpretations of the Catalan numbers.

\np
{\bf Type B/C results.}
Theorem~\ref{symmetric main theorem} leads to similar recursions in types $B/C$.
Let $S_{B}(z)$ be the generating series for the number of simple pairs in type $B_n/C_n$.  Equivalently the coefficient
of $z^n$ in $S_{B}(z)$ is the number of pairs of simple elements in $S_{2n+1}$ each of which are symmetric.
The isomorphism $\W(B_n) \cong \W(C_n)$ implies that this is also the number of pairs of simple symmetric elements
in $S_{2n+1}$.
One deduces the functional equation
$$S_{B}(F(z)) = 1 - \frac{1}{1+F(2z)} -  \frac{2F(z)}{1+F(z)},$$
(where $F(z) = \sum_{n\geq 1} n!z^n$ as above) which determines $S_{B}(z)$.
Here are some low order terms:
$$S_{B}(z) = 2 z^2 + 10 z^3 + 90 z^4 + 966 z^5 + 12338 z^6 + 181470 z^7 + 3018082 z^8 + 55995486 z^9 + \cdots.$$

\np
{\bf Decompositions into irreducibles.} Let $b_n$ be the number of decompositions of the positive roots in
types $B_n/C_n$ into disjoint unions of irreducible inversion sets, and let $B(z) = \sum_{n\geq 1} b_n z^n$ to be the
generating function.  
Theorem~\ref{symmetric main theorem} leads to the relation

$$B(z) = \frac{S_{B}(A(z))}{1-S_{B}(A(z))},$$
which completely determines $B(z)$.  Here are the low order terms of $B(z)$:
$$B(z) = z + 3 z^2 + 14 z^3 + 100 z^4 + 973 z^5 + 11804 z^6 + 168809 z^7 + 2757930 z^8 + 50522912 z^9 + \cdots.$$

\np
{\bf $\mathbf{B}_n/\mathbf{C}_n$ Catalan numbers.}
Let $\CatB{n}$ be the number of decompositions of the positive roots of $B_n/C_n$
into disjoint unions of inversion sets, where each inversion set contains a single simple root.
The isomorphism  $\W(B_n) \cong \W(C_n)$ implies that the number of such decompositions is the same
for types $B_n$ and $C_n$.
As in type $A$, these are the decompositions of maximal length (subject to the restriction that
each inversion set is non-empty) and thus are irreducible decompositions.


\begin{proposition} \label{B-C Catalan recursion}
The numbers $\CatB{n}$ satisfy the recursion
$\CatB{n} = \CatB{n-1} + 2 \sum_{k=0}^{n-2} \CatA{n-k-1}\CatB{k}$, and thus
$$\sum_{n\geq 1} \CatB{n} z^n =
\frac{1}{(1-4z)^{\frac{1}{2}} + z}.
$$
\end{proposition}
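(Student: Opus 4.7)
The plan is to establish the recursion by applying Theorem~\ref{symmetric main theorem} to the inversion set containing the highest root of $B_n$, and then to derive the generating function identity algebraically.

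Consider a decomposition $\Delta_{B_n}^+ = \Inv{\alpha_1} \sqcup \cdots \sqcup \Inv{\alpha_n}$ counted by $\CatB{n}$, so each $\Inv{\alpha_a}$ contains exactly one simple root. Any non-trivial refinement would produce a piece with at least two simple roots (by Lemma~\ref{simple root lemma} applied to each sub-piece), so the decomposition is automatically irreducible and Theorem~\ref{symmetric main theorem} applies. Label the parts so that $\alpha_1$ contains the highest root and write $\alpha_1 = \sigma_1[[\beta_{11},\ldots,\beta_{1m};\beta_{1(m+1)}]]$ in simple symmetric form. Condition~(iii) of the irreducibility statement forces all $\beta_{1j}$ to be trivial, so the unique simple root of $\Inv{\alpha_1}$ is inherited from $\Inv{\sigma_1}$; the theorem then splits the analysis into two cases based on whether $\alpha_1$ is minus-decomposable.

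If $\alpha_1$ is minus-decomposable, condition~(iv) gives $m=1$ and $\sigma_1 = \wo$ in $\W(X_1)$, so the inflation decomposes $\{1,\ldots,n\}$ into a single outer block of size $z_1 \ge 1$ and a central piece of size $p = n - z_1 \ge 0$. The inner type-$A$ and central type-$B/C$ sub-decompositions must themselves be maximal, contributing $\CatA{z_1-1}\CatB{p}$, and summing over $z_1$ yields $\sum_{z_1=1}^{n} \CatA{z_1-1}\CatB{n-z_1}$. If instead $\alpha_1$ is minus-indecomposable, the outer decomposition $\Delta_{X_m}^+ = \Inv{\sigma_1} \sqcup \Inv{\wo\sigma_1}$ must split the $m$ simple roots of $X_m$ into two singletons (so that $\alpha_1$ and $\alpha_2$ each have one simple root), forcing $m = 2$. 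A direct enumeration in $\W(B_2) \cong \W(C_2)$ shows there is exactly one pair of simple elements $\{\sigma_1, \wo\sigma_1\}$, so $\sigma_1$ is uniquely determined by the requirement that $\Inv{\sigma_1}$ contain the highest root. Inner blocks of sizes $z_1, z_2 \ge 1$ together with a central piece of size $p \ge 0$ satisfying $z_1 + z_2 + p = n$ then contribute $\sum \CatA{z_1-1}\CatA{z_2-1}\CatB{p}$, which collapses via the Catalan convolution $\CatA{k+1} = \sum_{i+j=k}\CatA{i}\CatA{j}$ to $\sum_{p=0}^{n-2}\CatA{n-p-1}\CatB{p}$.

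Adding the two case contributions and re-indexing (the $z_1 = 1$ term in the minus-decomposable case yields $\CatB{n-1}$, while the remaining minus-decomposable sum and the minus-indecomposable sum are identical, producing the factor of $2$) gives the claimed recursion. For the generating function, set $\tilde B(z) = \sum_{n \ge 0}\CatB{n}z^n$ and $A(z) = C(z) - 1$ where $C(z) = \sum_{n \ge 0}\CatA{n}z^n$ satisfies $2zC(z) = 1 - \sqrt{1-4z}$; multiplying the recursion by $z^n$ and summing gives $\tilde B(z)(1 - z - 2zA(z)) = 1$, and since $1 - z - 2zA(z) = 1 + z - 2zC(z) = z + \sqrt{1-4z}$, we obtain $\tilde B(z) = 1/(\sqrt{1-4z} + z)$. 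The main obstacle is the minus-indecomposable case: linking the ``each part has one simple root'' condition on the full $B_n$-decomposition to the analogous condition on the outer decomposition (so that $m = 2$), and carrying out the small-rank enumeration that produces the unique simple pair in $\W(X_2)$.
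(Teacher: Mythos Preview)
Your argument is correct and follows essentially the same route as the paper's proof. Both split into two cases according to whether $\alpha_1$ is minus-decomposable (giving $\sigma_1=\wo_{X_1}$, i.e.\ $\iota(\sigma_1)=\wo_3$) or minus-indecomposable (giving a simple $\sigma_1\in\W(X_2)$, i.e.\ $\iota(\sigma_1)=(4,1,3,5,2)$), obtain the same contributions, and collapse the second via the Catalan convolution. The only cosmetic differences are that the paper carries out the case analysis by enumerating the possible symmetric simple $\iota(\sigma_1)$ in $S_{2s+1}$ directly, whereas you invoke the irreducibility clauses of Theorem~\ref{symmetric main theorem} to force $m\in\{1,2\}$; and the paper omits the short generating-function computation you include. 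One small wording slip: your sentence ``any non-trivial refinement would produce a piece with at least two simple roots'' is inverted --- what you mean (and what Lemma~\ref{simple root lemma} gives) is that a non-trivial refinement of a piece with a single simple root would produce a nonempty sub-piece with \emph{no} simple root, which is impossible.
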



\begin{proof}
We consider the $B_n$ case.
Suppose then that $\Delta_{B_n}^+ = \Inv{\alpha_1} \sqcup \Inv{\alpha_2} \sqcup \dots \sqcup \Inv{\alpha_n}$
where each $\alpha_i \in \W(B_n)$ and each $\Inv{\alpha_i}$ contains a single simple root of $\Delta_{B_n}^+$.
Without loss of generality $\Inv{\iota(\alpha_1)}$ contains $e_1 - e_{2n+1}$.
By Theorem~\ref{symmetric main theorem}, we have
$ \alpha_1  =  \sigma_1[[\beta_{11},\beta_{12},\dots,\beta_{1s};\beta_{1(s+1)}]]$
where $\sigma_1 \in \W(B_s)$
and either $\sigma_1 = \wo_{B_1}$ or $\sigma_1$ is simple and $\Inv{\sigma_1}$ contains a single simple root.
Thus if $\sigma_1 \neq \wo_{B_1}$ then $\iota(\sigma_1)$ is simple, symmetric and $\Inv{\iota(\sigma_1)}$ contains a pair of $A_{2n}$ simple roots of the form
$e_i-e_{i+1}, e_{i'-1}-e_{i'}$, where $i' = 2n+2 - i$.
It is not hard to see that this forces $\iota(\sigma_1) = \wo[3]$, $\iota(\sigma_1)  = (41352)$ or
$\iota(\sigma_1) = (25314)$.  The last possibility is excluded by the fact that $\Inv{\iota(\sigma_1)}$ contains the highest root $e_1-e_5$.

First suppose that $\iota(\sigma_1) = \wo[3]$  and let $\{1,2,\dots,2n+1\}=U_1 \sqcup U_2 \sqcup U_3$ be the corresponding
decomposition into intervals with $|U_1|=|U_3|=n-k$ and $|U_2|=2k+1$ where $0\leq k \leq n-1$.
Then $\iota(\alpha_j) = \Id_3[\beta_{j1},\beta_{j2},\beta_{j3}]$ for $j=2,3,\dots,n$.
Furthermore, without loss of generality,
$\Delta_{U_1}^+ = \Inv{\beta_{21}} \sqcup \Inv{\beta_{31}} \sqcup \dots \sqcup \Inv{\beta_{(n-k)1}}$ is a maximal length
decomposition of a root system of type $A_{n-k-1}$.  There are $\CatA{n-k-1}$ such decompositions.
(We also have
$\Delta_{U_3}^+ = \Inv{\wo\beta_{23}\wo} \sqcup \Inv{\wo\beta_{33}\wo} \sqcup \dots \sqcup \Inv{\wo\beta_{(n-k)3}\wo}$.)
Finally
$\Delta_{U_2}^+ = \Inv{{\beta}_{(n-k+1)2}} \sqcup \Inv{{\beta}_{(n-k+2)2}} \sqcup \dots \sqcup \Inv{{\beta}_{n2}}$
is a maximal symmetric decomposition.  There are $\CatB{k}$ such decompositions.
Thus there are $\sum_{k=0}^{n-1} \CatA{n-k-1}\CatB{k}$ maximal decompositions of $\Delta_{B_n}^+$ with
$\iota(\alpha_1) =\wo[3]$.

Next suppose that $\iota(\sigma_1)  = (41352)$ and let $\{1,2,\dots,2n+1\}=U_1 \sqcup U_2 \sqcup \dots \sqcup U_5$ be the
corresponding decomposition.  Then, as above,
$\iota(\alpha_2)  \sqcup \iota(\alpha_3)  \sqcup \dots \sqcup \iota(\alpha_n) $
comprises maximal $A$ type decompositions of $\Delta_{U_1}^+$ and  $\Delta_{U_2}^+$
and a maximal symmetric decomposition of $\Delta_{U_3}^+$.
Thus there are

\begin{eqnarray*}
\sum_{z_1=1}^{n-1} \sum_{z_2=1}^{n-z_1} \CatA{z_1}\CatA{z_2}\CatB{n-z_1-z_2}
&  = & \sum_{k=0}^{n-2}\sum_{z_1+z_2=n-k} \CatB{k}\CatA{z_1}\CatA{z_2}  \\
& = &  \sum_{k=0}^{n-2} \CatB{k}\CatA{n-k-1}
\end{eqnarray*}
maximal decompositions of $\Delta_{B_n}^+$ with $\iota(\alpha_1) = (41352)$.

Adding the contributions of the two cases gives
$$\CatB{n} = \CatB{n-1} + 2 \sum_{k=0}^{n-2} \CatA{n-k-1}\CatB{k}$$ as claimed.   This easily implies the stated
form of the generating function.
\end{proof}

\np
{\em Remark. }
We have chosen to call these numbers the ``type $B/C$ Catalan numbers'', since they come from an enumerative
problem about Coxeter groups which yields the usual Catalan numbers in the type $A$ case.
There is at least one other use of the
term ``Catalan numbers for other types'' in the literature, again stemming from an enumerative problem
(generalizing non-crossing partitions) valid for all Coxeter groups.  In this second problem, the type $B_n/C_n$
numbers are $\binom{2n}{n}$  (see \cite[pg.\ 39]{Arm}) --
different from the numbers given by the recursion and generating function above.

\np
{\bf Number of decompositions into triples.} The most important case -- in any type -- of the problems
motivating
these questions about decompositions is the case of
decompositions into a disjoint union of three inversion sets.
As described in \S\ref{Motivation} this corresponds to the
the case of the eigenvalues of three Hermitian matrices summing to zero
(respectively the cup product of two cohomology groups into a third, after a similar symmetrization).
The corresponding enumerative/classification problem is to write down all triples
$\alpha_1$, $\alpha_2$, $\alpha_3\in S_n$
(again disregarding order) with $\Delta_n^{+}= \Inv{\alpha_1} \sqcup \Inv{\alpha_2} \sqcup \Inv{\alpha_3}$.
We make the further restriction that no $\alpha_j=\Id$ (all such triples are of the form $(w,\wo w, \Id)$ 
and hence elementary to understand).
Theorems~\ref{main theorem} and \ref{symmetric main theorem}
provide a recursive way to generate and enumerate all such triples.
Briefly, the method is a parallel recursion keeping track of not only the triples of the kind above,
but also the subset of those triples where $\alpha_1=\wo[m]$ for some $m$.
At each step, the new triples of each kind depend on the triples of both kinds for smaller $n$.
(We omit the exact description of the recursion since, although elementary, it is slightly messy.)
Here is a small table of the number of such triples, and both the $A_n$ and $B_n/C_n$ cases.

\hfill
\begin{longtable}{|c|r|r|}
\hline
$n$ & $A_n$ triples & $B_n/C_n$ triples \\
\hline
1 &  & 1 \\
2 & 1 & 4 \\
3 & 3 &  33 \\
4 & 17 &  351 \\
5 & 129 & 4210 \\
6 & 1116 & 55495 \\
7 & 10474 & 800476 \\
8 & 104604 & 12654164 \\
9 & 1101012 & 219870187 \\
10 & 12153179 & 4206375350 \\
11 & 140397525 & 88539459103 \\
12 & 1697555983 & 2043502238365 \\
13 & 21516940295 & 51440876843396 \\
14 & 286680892462 &  1403608329020473 \\
15 & 4028129552836 & 41257592671098146 \\
16 & 59885247963954 & 1299045890821350162 \\
17 & 944511887685826 & 43596718839825553381 \\
18 & 15828354015222453 & 1552871403021630700936 \\
19 & 281880601827533671 & 58488502832975791077421 \\
20 & 5327985147037232973 & 2322044948865982864468235 \\
\hline
\end{longtable} \hfill \rule{0cm}{0.1cm}\\

\section{Decomposing a single inversion set} \label{single section}

In this section we provide a recursive algorithm for listing all decompositions of the inversion set $\Inv{\alpha}$
of a given element $\alpha \in S_n$ as
$$
\Inv{\alpha} = \Inv{\alpha_1} \sqcup \Inv{\alpha_2}
$$
and provide a formula for the number of such decompositions\footnote{ We thank Lukas Katth\"an for asking us this question after a previous version of this paper appeared on ArXiv, see \cite{LK}.}.

Let $\alpha = \sigma[\beta_1, \ldots, \beta_m]$ be the simple form of $\alpha$.
To list all ordered decompositions\footnote{ We choose to list the ordered decompositions of $\Inv{\alpha}$ to
simplify the formula for counting them.} of $\Inv{\alpha}$ we proceed as follows:

\noindent
{\bf Step 1.} Write all decompositions
$$
\Inv{\beta_b} = \Inv{\beta_{1b}} \sqcup \Inv{\beta_{2b}} \quad \quad {\text { for }} \quad \quad 1 \leq b \leq m.
$$

\noindent
{\bf Step 2.} For every decomposition of Step 1 write the decompositions
$$
\begin{array}{rcl}
\alpha_1 &=& \sigma[\beta_{11}, \ldots, \beta_{1m}]\\
\alpha_2 &=& \Id_m[\beta_{21}, \ldots, \beta_{2m}]
\end{array}
$$
and, if $\sigma \neq \Id_m$,
$$
\begin{array}{rcl}
\alpha_1 &=& \Id_m[\beta_{11}, \ldots, \beta_{1m}]\\
\alpha_2 &=& \sigma[\beta_{21}, \ldots, \beta_{2m}].
\end{array}
$$
If $\sigma = \Id_m$ or if $\sigma$ is simple
with $m \geq 4$, these are all decompositions of $\Inv{\alpha}$ and the algorithm stops.
The remaining possibility is $\sigma=\wo[m]$, and in this case we continue to the next step.

\noindent
{\bf Step 3.} Write  all partitions $\mathcal{U}$ of the set
$\{1, 2, \ldots, m\}$ into $l \geq 4$ intervals $U_1, U_2, \ldots , U_l$ of lengths $z_1, z_2, \ldots, z_l$ and
for each such partition construct the following elements:
$$
\begin{array}{rcl}
\gamma_1 &=& \wo_{z_1}[\beta_1, \ldots, \beta_{z_1}];\\
\gamma_2 &=& \wo_{z_2}[\beta_{z_1 + 1}, \ldots, \beta_{z_1+ z_2}];\\
& \vdots&\\
\gamma_l &=& \wo_{z_l}[\beta_{z_1+ \ldots + z_{l-1} + 1}, \ldots, \beta_m].
\end{array}
$$

\noindent
{\bf Step 4.} Write  all decompositions
$$
\Inv{\gamma_c} = \Inv{\gamma_{1c}} \sqcup \Inv{\gamma_{2c}} \quad \quad {\text { for }} \quad \quad 1 \leq c \leq l.
$$

\noindent
{\bf Step 5.} For every decomposition of Step 1 and every simple $\sigma \in S_l$ write the decompositions
$$
\begin{array}{rcrl}
\alpha_1 &=& \sigma[\gamma_{11}, \ldots, \gamma_{1l}]&\\
\alpha_2 &=& (\wo_l \sigma) [\gamma_{21}, \ldots, \gamma_{2l}]&.
\end{array}
$$
These complete the list of all decompositions of $\Inv{\alpha}$.

The algorithm above provides a recursive
formula for the number of ordered decompositions of $\Inv{\alpha}$.
For $\alpha \in S_n$, denote by $\dd(\alpha)$ the number of ordered decompositions of
$\Inv{\alpha}$ into two pieces as above.  With this notation $\dd(\Id) = 1$ and $\dd(\sigma) =2$
if $\sigma$ is simple.
As in \S\ref{enumerative section} let $s_l$ denote the number of simple pairs in $S_l$, so that $2s_l$ is the
number of simple elements.
Then, in the notation of the algorithm, one has the following formula for $\dd(\alpha)$:
$$
\dd(\alpha) = \left\{
\begin{array}{ll}
\dd(\beta_1) \cdots \dd(\beta_m) & {\text{ if }}  \sigma = \Id_m\\
2\dd(\beta_1) \cdots  \dd(\beta_m) & {\text{ if }}  \sigma  {\text { is simple and }} m \geq 4\\
2\dd(\beta_1) \cdots \dd(\beta_m) + 2\sum_{l \geq 4} s_l (\sum_{\mathcal U}  \dd(\gamma_1) \ldots \dd(\gamma_l)) & {\text{ if }} \sigma = \wo_m,
\end{array}
\right.
$$
where the summation $\sum_{\mathcal{U}}$ in the third case is over all partitions $\mathcal{U}$ of
$\{1, 2, \ldots, m\}$ into $l$ intervals.

The problem of decomposing a single inversion set can be solved algorithmically for types $B$ and $C$ as well and, furthermore,
 one can also discuss the decomposition of a given inversion set into the disjoint union of a fixed number of
inversion sets. These descriptions are analogous to the one given above and we omit them here.

\section{Parametrizing regular codimension $n$ faces of the Littlewood-Richardson cone}\label{Motivation}
In this section we explain in detail how our work illuminates the structure of the Littlewood-Richardson cone.
For clarity of exposition
we discuss only the case of type $A$ but everything carries over to the cases of types $B$ and $C$.

\np
{\bf Regular faces of the Littlewood-Richardson cone.}
To describe how our work relates to the Littlewood-Richardson cone we first convert the problem of eigenvalues of
Hermitian matrices to its symmetric version, i.e., instead of Hermitian matrices $A,B, C$ satisfying
$C = A+B$ we will consider Hermitian matrices $A,B, C$ satisfying $A+B+C = 0$. It is clear that the cone $\mathcal{C''}$, analogous to the cone $\mathcal{C'}$ described in \S\ref{Introduction}
is contained in the hyperplane $V$ defined by
$$
\lambda_1 + \ldots + \lambda_n + \mu_1 + \ldots + \mu_n + \nu_1 + \ldots + \nu_n = 0
$$
and contains the two-dimensional subspace $W \subset V$ of $(\R^n)^3$ spanned by \newline
$(1, \dots, 1, 0, \ldots, 0, -1, \ldots, -1)$ and
$(0, \dots, 0, 1, \ldots, 1, -1, \ldots, -1)$.
Denote by $\mathcal{C}$ the image of $\mathcal{C''}$ under the projection $V \to V/W$. We will use again
$(\lambda, \mu, \nu)$ to denote the projection of a point in $V$ to $V/W$. The natural coordinates in $V/W$ are
$\lambda = (a_1, \ldots, a_{n-1})$, $\mu = (b_1, \ldots, b_{n-1})$, and $\nu = (c_1, \ldots, c_{n-1})$, where $a_i = \lambda_i - \lambda_{i+1}$,
$b_i = \mu_i - \mu_{i+1}$, and $c_i = \nu_i - \nu_{i+1}$ for $1 \leq i \leq n-1$.
Clearly $V/W \cong (\R^{n-1})^3$ and $S_n$ acts naturally on each of the components
of $(\R^{n-1})^3$: we fix the natural basis $\{\vep_i - \vep_{i+1} \, | \, 1 \leq i \leq n-1\}$ of $\R^{n-1}$ and the action of $S_n$ is by permuting the
indices of this basis. The cone $\mathcal{C}$ is a pointed polyhedral cone of full dimension.
Each of the coordinate hyperplanes $a_i = 0$, $b_i=0$, and $c_i = 0$ for a fixed $i$ with
$1 \leq i \leq n-1$ is a facet of $\mathcal{C}$. Let $(\R^{n-1})^3_+$ denote the dominant cone defined by $a_i \geq 0, b_i \geq 0, c_i \geq 0$ for all $1 \leq i \leq n-1$.
A face of $\mathcal{C}$ is called {\em regular} if it intersects the interior of $(\R^{n-1})^3_+$.
N. Ressayre proved that the regular faces of $\mathcal{C}$ have codimension at most $n-1$. Furthermore, the faces of codimension $n-1$
are exactly the intersection of $(\R^{n-1})^3_+$ with the codimension $n-1$ subspaces $T_{\alpha_1, \alpha_2, \alpha_3}$ defined by
$$
\alpha_1^{-1} \lambda + \alpha_2^{-1} \mu + \alpha_3^{-1} \nu = 0
$$
for $(\alpha_1, \alpha_2, \alpha_3)$ with the property that $\Delta_n^+ = \Phi(\alpha_1) \sqcup \Phi(\alpha_2) \sqcup \Phi(\alpha_3)$,
see \cite[Theorem~C]{R}.
Let $(\alpha_1, \alpha_2, \alpha_3)$
be such a triple and denote by $\mathcal{C}_{\alpha_1, \alpha_2, \alpha_3}$ the corresponding face of $\mathcal{C}$, i.e.,
$\mathcal{C}_{\alpha_1, \alpha_2, \alpha_3} = T_{\alpha_1, \alpha_2, \alpha_3} \cap (\R^{n-1})^3_+ = T_{\alpha_1, \alpha_2, \alpha_3} \cap \mathcal{C}$.

Note that $\mathcal{C}_{\alpha_1, \alpha_2, \alpha_3}$ is described by its defining hyperplanes: $(n-1)$ from the equation
$\alpha_1^{-1} \lambda + \alpha_2^{-1} \mu + \alpha_3^{-1} \nu = 0$ and $3(n-1)$ from $a_i = 0$, $b_i=0$, and $c_i = 0$. It is difficult to conclude from this
description what its defining rays are. We will now show that Theorem~\ref{main theorem} allows us to conclude that $\mathcal{C}_{\alpha_1, \alpha_2, \alpha_3}$
is a simplicial cone and provides an algorithm for writing down its defining rays. (The fact that $\mathcal{C}_{\alpha_1, \alpha_2, \alpha_3}$ is a simplicial cone
also follows from some results in \cite{DR}.) In this section it will be convenient to identify the elements of $\Delta_n^+$ with the vectors $\vep_i - \vep_j$.
Consider the inner product in $\R^{n-1}$ defined by $(\lambda, \vep_i - \vep_j) := \lambda_i - \lambda_j$. It is immediate that, for $1 \leq i < j \leq n$,
$$
(\lambda, \vep_i - \vep_j) = a_i + \ldots + a_{j-1}.
$$
This inner product is $S_n$-invariant; in particular we have $(\alpha^{-1} \lambda, \vep_i - \vep_j) = (\lambda, \alpha(\vep_i - \vep_j))$ for any $\alpha \in S_n$ and
$\vep_i - \vep_j \in \Delta_n^+$. To obtain a set of defining equations
for $T_{\alpha_1, \alpha_2, \alpha_3}$ it is sufficient to chose a basis $\{v_1, v_2,\ldots, v_{n-1}\}$
of $\R^{n-1}$ consisting of elements of $\Delta_n^+$ and write
$$
( \alpha_1^{-1} \lambda + \alpha_2^{-1} \mu + \alpha_3^{-1} \nu, v_i) = 0
$$
for $1 \leq i \leq n-1$. Consider the form of the equation $( \alpha_1^{-1} \lambda + \alpha_2^{-1} \mu + \alpha_3^{-1} \nu, v) = 0$ for $v \in \Delta_n^+$.
Exactly one of the roots $\alpha_1(v), \alpha_2(v), \alpha_3(v)$ is negative, say $\alpha_1(v) = - (\vep_i - \vep_j)$, $\alpha_2(v) = \vep_k - \vep_l$, and
$\alpha_3(v) = \vep_p - \vep_q$. Then $( \alpha_1^{-1} \lambda + \alpha_2^{-1} \mu + \alpha_3^{-1} \nu, v) = 0$ becomes
$$
a_i + \ldots + a_{j-1} = b_k + \ldots + b_{l-1} + c_p + \ldots + c_{q-1}.
$$
This equation is especially simple when $-w_1(v)$ is simple, i.e., when $j = i+1$. Then it becomes
$$
a_i= b_k + \ldots + b_{l-1} + c_p + \ldots + c_{q-1}.
$$
Borrowing from elementary linear algebra, we call $a_i$ an {\it $v$-pivot variable} and
$b_k, \ldots, b_{l-1}$, $c_p, \ldots, c_{q-1}$ {\it $v$-free variables} in this case.

\begin{proposition} \label{cone} Assume that $\Delta_n^+ = \Phi(\alpha_1) \sqcup \Phi(\alpha_2) \sqcup \Phi(\alpha_3)$. The set
$$
S_{\alpha_1, \alpha_2, \alpha_3} = \{v \in \Delta_n^+ \, | \, -\alpha_1(v) {\text { is simple or }} -\alpha_2(v) {\text { is simple or }} -\alpha_3(v) {\text { is simple}}\}
$$
is a basis of $\R^{n-1}$. Furthermore, this set can be labeled $\{ v_1, v_2, \ldots, v_{n-1}\}$ so that, for $i < j$, the $v_i$-pivot
variable is not an $v_j$-free variable.
\end{proposition}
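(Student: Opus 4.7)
My plan is to prove both assertions -- that $S := S_{\alpha_1,\alpha_2,\alpha_3}$ is a basis of $\R^{n-1}$ and admits the required pivot labelling -- simultaneously by induction on $n$, with Theorem~\ref{main theorem} driving the recursion. The base case $n=2$ is immediate: the only non-trivial decomposition is $\alpha_1 = \wo[2]$, $\alpha_2 = \alpha_3 = \Id_2$; then $S = \{(1,2)\}$ is a basis of $\R^1$ and the pivot condition is vacuous.

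For the inductive step, assume $(1,n)\in\Phi(\alpha_1)$ and write $\alpha_a = \sigma_a[\beta_{a1},\ldots,\beta_{am}]$ via Theorem~\ref{main theorem}, with intervals $U_1,\ldots,U_m$ of sizes $z_1,\ldots,z_m$. The smaller decompositions $\Delta_m^+ = \bigsqcup_a \Phi(\sigma_a)$ and $\Delta_{z_b}^+ = \bigsqcup_a \Phi(\beta_{ab})$ then supply, by induction, a basis $S^{(\sigma)}$ of $\R^{m-1}$ and bases $S^{(b)}$ of $\R^{z_b-1}$, each equipped with a valid pivot ordering. Setting $V^i_c := \alpha_i(U_{\sigma_i^{-1}(c)})$, the elements of $S$ split into two natural families. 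The \emph{sub-block} family consists of the lifts $\hat v$ of each $v\in S^{(b)}$ via the order-preserving bijection $\{1,\ldots,z_b\}\to U_b$. The \emph{sigma} family consists, for each $w=(j,j')\in S^{(\sigma)}$ with $-\sigma_i(w)=s_{k'}$, of the root $\hat w = \bigl(\alpha_i^{-1}(\min V^i_{k'+1}),\,\alpha_i^{-1}(\max V^i_{k'})\bigr) \in \Delta_n^+$, for which $\alpha_i(\hat w)$ is a simple root of $\Delta_n^+$ sitting on the boundary between $V^i_{k'}$ and $V^i_{k'+1}$. Counting yields $|S|=\sum_b(z_b-1)+(m-1)=n-1$.

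Decompose $\R^{n-1} = R \oplus N$, where $R = \bigoplus_b R_b$ is spanned by the simple roots internal to each $U_b$ (so $\dim R_b = z_b-1$) and $N$ is spanned by the $m-1$ boundary simple roots, identified naturally with $\R^{m-1}$. By induction each sub-block family $\{\hat v : v\in S^{(b)}\}$ is a basis of $R_b$, while the sigma-lifts project onto $N$ precisely as $S^{(\sigma)}$ under the identification $N\cong\R^{m-1}$, and hence project to a basis of $N$; together $S$ is a basis of $\R^{n-1}$. To obtain the labelling, list the sigma-lifts first in the inductively supplied order and the sub-block lifts afterwards, each $S^{(b)}$ in its inductively supplied order. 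The key structural fact is that every sigma-pivot sits at a \emph{boundary} position of some $V^i$-partition, whereas the pivot and all free variables of any $\hat v$ with $v\in S^{(b)}$ are \emph{internal} to the three intervals $V^i_{\sigma_i(b)}$, $i=1,2,3$; so sigma-pivots can never appear as sub-block free variables. Sub-block lifts from distinct blocks $U_b$, $U_{b'}$ do not interact because their indices lie in disjoint $V^i$-intervals, and within a single block or within the sigma layer the inductive pivot property transfers directly through the lifts.

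The main obstacle I expect is the bookkeeping for this boundary-versus-internal dichotomy: one must verify carefully that the simple-root indexing of $\Delta_m^+$ (respectively $\Delta_{z_b}^+$) translates under the lifts to boundary (respectively internal) positions of the $V^i$-partitions in $\Delta_n^+$ in such a way that the inductive pivot property on $S^{(\sigma)}$ and each $S^{(b)}$ really yields the pivot property for $S$. Once that dictionary is in place, the argument reduces to the disjointness of the relevant index intervals together with the inductive hypotheses.
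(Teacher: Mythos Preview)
Your recursive strategy matches the paper's: split $S$ into the $\sigma$-level roots (your sigma family; the paper's level-one elements) and the sub-block roots, show the former project to a basis of $\R^{m-1}$ while the latter give bases of each $R_b$, then list the sigma-level elements before the sub-block elements for the pivot property. Your explicit formula for the sigma-lifts $\hat w$ is correct and in fact more detailed than what the paper writes down.

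There is, however, a genuine gap. You apply the inductive hypothesis to the $\sigma$-decomposition $\Delta_m^+=\bigsqcup_a\Phi(\sigma_a)$ to conclude that $S^{(\sigma)}$ is a basis with a valid pivot ordering. This requires $m<n$, but nothing rules out $m=n$: that occurs exactly when every $z_b=1$, i.e.\ when $\alpha_1$ is itself simple (with $n\geq 4$) or $\alpha_1=\wo[n]$. In that case $\sigma_a=\alpha_a$ for all $a$, the $\sigma$-decomposition \emph{is} the original decomposition, and your appeal to induction is circular. The paper avoids this by handling the $\sigma$-level directly rather than inductively: Theorem~\ref{main theorem} forces either $\sigma_1=\wo[m]$ with $\sigma_2=\sigma_3=\Id_m$, in which case $S^{(\sigma)}$ is exactly the set of simple roots of $\Delta_m^+$; or $\sigma_1$ simple with $\sigma_2=\wo[m]\sigma_1$ and $\sigma_3=\Id_m$, in which case $S^{(\sigma)}=\{\pm\sigma_1^{-1}(s_k):1\leq k\leq m-1\}$. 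Either way $S^{(\sigma)}$ is visibly a basis of $\R^{m-1}$, and a short direct check shows that at this level no pivot variable ever occurs as a free variable of another element of $S^{(\sigma)}$ (the pivot indices and free-variable indices fall into disjoint coordinate types). Replace your inductive appeal at the $\sigma$-level by this direct argument and the rest of your proof goes through unchanged.
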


\begin{proof}
Let $\alpha_i = \sigma_i[\beta_{i1}, \beta_{i2}, \ldots, \beta_{im}]$ and let $\{1,2, \ldots, n\} = U_1 \sqcup U_2 \sqcup \ldots \sqcup U_m$ be
the corresponding decomposition into intervals. Assume $v = \vep_i - \vep_j \in S_{\alpha_1,\alpha_2, \alpha_3}$.
Define the {\it level of $v$} inductively as follows: if $i$ and $j$ belong to different parts of $I$, then the level of $v$ is one;
otherwise, $i, j \in U_k$ and the level of $v$ is one plus the level of $v$ for the decomposition
$\Delta_{z_k}^+ = \Phi(\beta_{1k}) \sqcup \Phi(\beta_{2k}) \sqcup \Phi(\beta_{3k})$. Consider the projection $I \to \{1,2, \ldots, m\}$.
Under this projection the level one elements of $S_{\alpha_1, \alpha_2, \alpha_3}$ are sent to the elements of $S_{\sigma_1, \sigma_2, \sigma_3}$
which form a basis since either $\sigma_1 = \wo$ or $\sigma_2 = \wo \sigma_1$. The elements of level greater than one are sent to zero.
On the other hand, by a simple inductive argument,
the elements of level greater than one form bases in the subspace generated by $\{\vep_i - \vep_j\, | \, i, j {\text { in the same }} U_k\}$.
Combining the above we conclude that $S_{\alpha_1, \alpha_2, \alpha_3}$ is a basis of $\R^{n-1}$.

To prove the second assertion, we order $S_{\alpha_1, \alpha_2, \alpha_3}$ linearly so that elements of lower level come before elements of higher level.
Notice first that if $v_1$ is of level one and $v_2$ is of level greater than one, than no
$v_1$-pivot variable is $v_2$-free. Now assume that both $v_1$ and $v_2$ are of level one. Passing to the projection as above, we
conclude again that no $v_1$-pivot variable is $v_2$-free.
\end{proof}

We call the $v_i$-pivot variables simply {\it pivot variables} of $C_{\alpha_1,\alpha_2,\alpha_3}$ and the rest of $a_i, b_i, c_i$
we call {\it free variables}.

\begin{corollary} \label{corollary cone} $C_{\alpha_1,\alpha_2,\alpha_3}$ is a simplicial cone.
\end{corollary}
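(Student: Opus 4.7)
The plan is to use Proposition~\ref{cone} to express $C_{\alpha_1,\alpha_2,\alpha_3}$ in coordinates that make it manifestly a non-negative orthant. First I would count dimensions: the ambient space $V/W \cong (\R^{n-1})^3$ has dimension $3(n-1)$, and since $\{v_1, \ldots, v_{n-1}\}$ is a basis of $\R^{n-1}$, the $n-1$ equations $(\alpha_1^{-1}\lambda + \alpha_2^{-1}\mu + \alpha_3^{-1}\nu, v_i) = 0$ for $1 \leq i \leq n-1$ are linearly independent and cut out the subspace $T_{\alpha_1,\alpha_2,\alpha_3}$ of dimension $2(n-1)$. Thus it suffices to realize $C_{\alpha_1,\alpha_2,\alpha_3} = T_{\alpha_1,\alpha_2,\alpha_3} \cap (\R^{n-1})^3_+$ as a simplicial cone of dimension $2(n-1)$.

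Next, for each $v_i$ I would pick an $\alpha_k$ for which $-\alpha_k(v_i)$ is simple and rewrite the $i$-th equation in the pivot form $p_i = \sum w$, where $p_i$ is a single coordinate among the $a_j, b_j, c_j$ and the sum runs over the $v_i$-free variables (each appearing with coefficient $+1$). Declaring these $n-1$ coordinates $p_1, \ldots, p_{n-1}$ the \emph{pivots} and the remaining $2(n-1)$ coordinates the \emph{free coordinates}, the ordering from Proposition~\ref{cone} guarantees that the pivot $p_i$ is never a $v_j$-free variable for $j > i$. Processing the equations in reverse order $i = n-1, n-2, \ldots, 1$ and back-substituting, each $p_i$ is rewritten as a non-negative integer linear combination of the free coordinates only, because at step $i$ the right-hand side contains only free coordinates and pivots $p_k$ with $k > i$, and each such $p_k$ has already been expressed as a non-negative combination of free coordinates.

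This exhibits $T_{\alpha_1,\alpha_2,\alpha_3}$ as the graph of a linear map with non-negative entries from the $2(n-1)$-dimensional space of free coordinates to the $(n-1)$-dimensional space of pivots. Under this identification, the inequalities defining $(\R^{n-1})^3_+$ split into two parts: non-negativity of each free coordinate, which is a genuine constraint, and non-negativity of each pivot, which is automatic since each pivot is a non-negative combination of non-negative free coordinates. Hence $C_{\alpha_1,\alpha_2,\alpha_3}$ is linearly isomorphic to $\R_{\geq 0}^{2(n-1)}$ and is therefore simplicial, with the $2(n-1)$ extreme rays corresponding to setting all but one free coordinate to zero. The only substantive input is Proposition~\ref{cone}; once its ordered basis is available, no step presents a real obstacle, and the key point is simply that back-substitution preserves the non-negativity of the coefficients throughout.
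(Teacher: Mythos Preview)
Your argument is correct and is essentially the paper's own proof, carried out with more detail: both invoke Proposition~\ref{cone} to obtain the ordered pivot/free split, then back-substitute from the last equation upward so that every pivot becomes a non-negative combination of the $2(n-1)$ free coordinates, whence $C_{\alpha_1,\alpha_2,\alpha_3}\cong \R_{\geq 0}^{2(n-1)}$. The only extra content you add is the explicit dimension count and the identification of $T_{\alpha_1,\alpha_2,\alpha_3}$ as a graph, which are welcome clarifications but not a different route.
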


\begin{proof} It follows from Proposition~\ref{cone} that there are exactly $n-1$ pivot variables. Furthermore, by ordering them as above
we can start from the bottom and replace any pivot variable appearing in the expression of another pivot variable by its expression. When
we reach the top equation, every pivot variable will have become expressed with non-negative coefficients in terms of
the free variables only.
\end{proof}

\begin{example}
We continue with Example~\ref{example}.
Recall that
$\alpha_1 = (4,5,6,1,7,8,3,2)$, $\alpha_2 = (5,3,4,8,1,2,6,7)$, $\alpha_3 = (1,3,2,4,6,5,7,8)$
and $\Delta_8^+ = \Inv{\alpha_1} \sqcup \Inv{\alpha_2} \sqcup \Inv{\alpha_3}$.
The set $S_{\alpha_1, \alpha_2, \alpha_3}$
together with the corresponding equations by level is:
$$
\begin{array} {lllll}
{\text {Level 1: }} & \vep_2 - \vep_6: & a_2 = b_5 + b_6 + b_7 + &c_3 + c_4\\
& \vep_4 - \vep_8: & a_7 = b_1 + &c_4 + c_5 + c_6 + c_7\\
& \vep_1 - \vep_7: & b_3 = a_5 + &c_1 + c_2 + c_3 + c_4 + c_5 + c_6\\
{\text {Level 2: }} & \vep_1 - \vep_3: & a_4 = b_4 + b_5 + &c_1\\
& \vep_5-\vep_6: & c_5 = a_1 + & b_7\\
&\vep_7 - \vep_8: & b_2 = a_6 + & c_7\\
{\text {Level 3:}} & \vep_2 - \vep_3: & c_2 = a_3 + &b_5.
\end{array}
$$
The pivot variables $c_2$ and $c_5$ appear in the expressions for $a_7$ and $b_3$ and need to be replaced. After the appropriate substitutions we
obtain that the generating rays $r_1$,\ldots, $r_{14}$ of $C_{\alpha_1,\alpha_2,\alpha_3}$ corresponding to the free variables
$a_1, a_3, a_5, a_6, b_1, b_4, b_5, b_6, b_7, c_1, c_3, c_4, c_6, c_7$ respectively are:
$$
\begin{array}{l|ccccccccccccccccccccl}
& a_1 & a_2 & a_3 & a_4 & a_5 & a_6 & a_7 &
b_1 & b_2 & b_3 & b_4 & b_5 & b_6 & b_7 &
c_1 & c_2 & c_3 & c_4 & c_5 & c_6 & c_7  \\
\hline
r_1   &1&0&0&0&0&0&1 &0&0&1&0&0&0&0 &0&0&0&0&1&0&0\\
r_2   &0&0&1&0&0&0&0 &0&0&1&0&0&0&0 &0&1&0&0&0&0&0\\
r_3   &0&0&0&0&1&0&0 &0&0&1&0&0&0&0 &0&0&0&0&0&0&0\\
r_4   &0&0&0&0&0&1&0 &0&1&0&0&0&0&0 &0&0&0&0&0&0&0\\
r_5   &0&0&0&0&0&0&1 &1&0&0&0&0&0&0 &0&0&0&0&0&0&0\\
r_6   &0&0&0&1&0&0&0 &0&0&0&1&0&0&0 &0&0&0&0&0&0&0\\
r_7   &0&1&0&1&0&0&0 &0&0&1&0&1&0&0 &0&1&0&0&0&0&0\\
r_8   &0&1&0&0&0&0&0 &0&0&0&0&0&1&0 &0&0&0&0&0&0&0\\
r_9   &0&1&0&0&0&0&1 &0&0&1&0&0&0&1 &0&0&0&0&1&0&0\\
r_{10}&0&0&0&1&0&0&0 &0&0&3&0&0&0&0 &1&0&0&0&0&0&0\\
r_{11}&0&1&0&0&0&0&0 &0&0&1&0&0&0&0 &0&0&1&0&0&0&0\\
r_{12}&0&1&0&0&0&0&1 &0&0&1&0&0&0&0 &0&0&0&1&0&0&0\\
r_{13}&0&0&0&0&0&0&1 &0&0&1&0&0&0&0 &0&0&0&0&0&1&0\\
r_{14}&0&0&0&0&0&0&1 &0&1&0&0&0&0&0 &0&0&0&0&0&0&1.
\end{array}
$$
\end{example}

\renewcommand{\thesubsection}{A.\arabic{subsection}}
\setcounter{subsection}{0}
\renewcommand{\mysubsection}[1]{\refstepcounter{subsection}\vspace{0.3cm}\noindent A.\arabic{subsection}.\ {#1.}}


\resetdiagramdefaults   

\newpage
\noindent
\rule{0.01cm}{0cm}\hfill{\sc Appendix A: Sign diagrams}\hfill\rule{0.01cm}{0cm}

\pdfbookmark[1]{A. Sign Diagrams}{Sign Diagrams}

This appendix is devoted to {\em sign diagrams}, a method of displaying type $A$ inversion sets which in some sense
extends to complete flag varieties the Young diagrams used when describing Schubert cycles on Grassmanians.
Although the use of sign diagrams is not necessary for the proofs of the theorems, many of our arguments have been
guided by diagrammatic thinking and their point of view makes several statements in the paper transparent.

\parshape 9 0cm \textwidth 0cm \textwidth  0cm 10cm 0cm 10cm 0cm 10.5cm 0cm 11cm 0cm 11.5cm 0cm 12cm 0cm \textwidth
\mysubsection{\bf Basic definition}
In order to display the inversion set of an element $\alpha\in S_n$ we start by listing the numbers $1$,\ldots, $n$
across the page, and draw a triangular grid of squares below them, as illustrated at right in the case $n=6$.
Every square in the grid corresponds to exactly one $(i,j)$ with $1\leq i< j\leq n$; the square corresponding
to $(i,j)$ is the unique square which is directly southeast of $i$ and directly southwest of $j$.
In the picture we have labelled the sample squares (a) (1,6); (b) (2,4); and (c) (4,5).

\newgray{minusedge}{0.40}

\vspace{-9\baselineskip}
\hfill
\smash{
\begin{tabular}{c}
\begin{pspicture}(-1,2.8)(5,2)
\SpecialCoor
\putnums{6}
\drawgrid{5}
\putinsquare{1}{6}{\color{gray}{a}}
\putinsquare{2}{4}{\color{gray}{b}}
\putinsquare{4}{5}{\color{gray}{c}}
\end{pspicture}
\end{tabular}
}

\resetdiagramdefaults   

\vspace{8.5\baselineskip}

Given $\alpha$ we then mark all the squares corresponding to $(i,j)\in \Inv\alpha$ with a shaded ``$-$''
(to indicate that the positive
root $(i,j)$ is sent to a negative root by $\alpha$), and mark those $(i,j)\not\in\Inv\alpha$
with an unshaded ``$+$'' (to indicate that $(i,j)$ is sent to a positive root by $\alpha$).
In order to reduce clutter in the diagram
we sometimes simply omit the $+/-$ signs or the numbers $1$,\ldots, $n$ at the top,
since these may be deduced from the size and shading of the diagram.
Here is the sign diagram for the inversion set of $\alpha=(1, 6, 3, 5, 2, 4)\in S_6$
displayed using the two different conventions.


\noindent
\hfill
\begin{tabular}{c}
\begin{pspicture}(-1,-2)(5,1)
\SpecialCoor
\putnums{6}
\drawgrid{5}
\putplus{1}{2}
\putplus{1}{3}
\putplus{1}{4}
\putplus{1}{5}
\putplus{1}{6}
\putminus{2}{3}
\putminus{2}{4}
\putminus{2}{5}
\putminus{2}{6}
\putplus{3}{4}
\putminus{3}{5}
\putplus{3}{6}
\putminus{4}{5}
\putminus{4}{6}
\putplus{5}{6}
\end{pspicture}
\end{tabular}
vs
\begin{tabular}{c}
\begin{pspicture}(-1,-2.8)(5,1)
\SpecialCoor
\drawgrid{5}
\putminusbox{2}{3}
\putminusbox{2}{4}
\putminusbox{2}{5}
\putminusbox{2}{6}
\putminusbox{3}{5}
\putminusbox{4}{5}
\putminusbox{4}{6}
\end{pspicture}
\end{tabular}
\hfill\rule{0.01cm}{0cm}

The main problem motivating the paper is describing decompositions of $\Delta^{+}_{n}$.
Here are the sign diagrams for such a decomposition with $n=21$, reduced in scale to fit the page.




\darkergrays
\psset{unit=0.30}

\hspace{-1.25cm}
\noindent
\begin{tabular}{c@{$\sqcup$}c@{$\sqcup$}c}
\begin{tabular}{c}
\begin{pspicture}(-1,-10.3)(20,1)
\SpecialCoor
\drawgrid{20}
\putminusbox{1}{2}
\putminusbox{1}{3}
\putminusbox{1}{4}
\putminusbox{1}{6}
\putminusbox{1}{8}
\putminusbox{1}{9}
\putminusbox{1}{10}
\putminusbox{1}{11}
\putminusbox{1}{14}
\putminusbox{1}{15}
\putminusbox{1}{16}
\putminusbox{1}{18}
\putminusbox{2}{8}
\putminusbox{2}{16}
\putminusbox{2}{18}
\putminusbox{3}{8}
\putminusbox{3}{16}
\putminusbox{3}{18}
\putminusbox{4}{8}
\putminusbox{4}{16}
\putminusbox{4}{18}
\putminusbox{5}{6}
\putminusbox{5}{7}
\putminusbox{5}{8}
\putminusbox{5}{9}
\putminusbox{5}{10}
\putminusbox{5}{11}
\putminusbox{5}{12}
\putminusbox{5}{13}
\putminusbox{5}{14}
\putminusbox{5}{15}
\putminusbox{5}{16}
\putminusbox{5}{18}
\putminusbox{6}{8}
\putminusbox{6}{9}
\putminusbox{6}{10}
\putminusbox{6}{11}
\putminusbox{6}{15}
\putminusbox{6}{16}
\putminusbox{6}{18}
\putminusbox{7}{8}
\putminusbox{7}{9}
\putminusbox{7}{10}
\putminusbox{7}{11}
\putminusbox{7}{13}
\putminusbox{7}{14}
\putminusbox{7}{15}
\putminusbox{7}{16}
\putminusbox{7}{18}
\putminusbox{8}{16}
\putminusbox{8}{18}
\putminusbox{9}{10}
\putminusbox{9}{11}
\putminusbox{9}{16}
\putminusbox{9}{18}
\putminusbox{10}{11}
\putminusbox{10}{16}
\putminusbox{10}{18}
\putminusbox{11}{16}
\putminusbox{11}{18}
\putminusbox{12}{13}
\putminusbox{12}{14}
\putminusbox{12}{15}
\putminusbox{12}{16}
\putminusbox{12}{18}
\putminusbox{13}{14}
\putminusbox{13}{15}
\putminusbox{13}{16}
\putminusbox{13}{18}
\putminusbox{14}{15}
\putminusbox{14}{16}
\putminusbox{14}{18}
\putminusbox{15}{16}
\putminusbox{15}{18}
\putminusbox{17}{18}
\end{pspicture}
\end{tabular}
&
\begin{tabular}{c}
\begin{pspicture}(-1,-10.3)(20,1)
\SpecialCoor
\drawgrid{20}
\putminusbox{1}{5}
\putminusbox{1}{7}
\putminusbox{1}{12}
\putminusbox{1}{13}
\putminusbox{1}{19}
\putminusbox{2}{3}
\putminusbox{2}{4}
\putminusbox{2}{5}
\putminusbox{2}{6}
\putminusbox{2}{7}
\putminusbox{2}{9}
\putminusbox{2}{10}
\putminusbox{2}{11}
\putminusbox{2}{12}
\putminusbox{2}{13}
\putminusbox{2}{14}
\putminusbox{2}{15}
\putminusbox{2}{19}
\putminusbox{3}{5}
\putminusbox{3}{6}
\putminusbox{3}{7}
\putminusbox{3}{9}
\putminusbox{3}{10}
\putminusbox{3}{11}
\putminusbox{3}{12}
\putminusbox{3}{13}
\putminusbox{3}{14}
\putminusbox{3}{15}
\putminusbox{3}{19}
\putminusbox{4}{5}
\putminusbox{4}{6}
\putminusbox{4}{7}
\putminusbox{4}{9}
\putminusbox{4}{10}
\putminusbox{4}{11}
\putminusbox{4}{12}
\putminusbox{4}{13}
\putminusbox{4}{14}
\putminusbox{4}{15}
\putminusbox{4}{19}
\putminusbox{5}{19}
\putminusbox{6}{7}
\putminusbox{6}{12}
\putminusbox{6}{13}
\putminusbox{6}{14}
\putminusbox{6}{19}
\putminusbox{7}{12}
\putminusbox{7}{19}
\putminusbox{8}{9}
\putminusbox{8}{10}
\putminusbox{8}{11}
\putminusbox{8}{12}
\putminusbox{8}{13}
\putminusbox{8}{14}
\putminusbox{8}{15}
\putminusbox{8}{19}
\putminusbox{9}{12}
\putminusbox{9}{13}
\putminusbox{9}{14}
\putminusbox{9}{15}
\putminusbox{9}{19}
\putminusbox{10}{12}
\putminusbox{10}{13}
\putminusbox{10}{14}
\putminusbox{10}{15}
\putminusbox{10}{19}
\putminusbox{11}{12}
\putminusbox{11}{13}
\putminusbox{11}{14}
\putminusbox{11}{15}
\putminusbox{11}{19}
\putminusbox{12}{19}
\putminusbox{13}{19}
\putminusbox{14}{19}
\putminusbox{15}{19}
\putminusbox{16}{19}
\putminusbox{17}{19}
\putminusbox{18}{19}
\putminusbox{20}{21}
\end{pspicture}
\end{tabular}
&
\begin{tabular}{c}
\begin{pspicture}(-1,-10.3)(20,1)
\SpecialCoor
\drawgrid{20}
\putminusbox{1}{17}
\putminusbox{1}{20}
\putminusbox{1}{21}
\putminusbox{2}{17}
\putminusbox{2}{20}
\putminusbox{2}{21}
\putminusbox{3}{4}
\putminusbox{3}{17}
\putminusbox{3}{20}
\putminusbox{3}{21}
\putminusbox{4}{17}
\putminusbox{4}{20}
\putminusbox{4}{21}
\putminusbox{5}{17}
\putminusbox{5}{20}
\putminusbox{5}{21}
\putminusbox{6}{17}
\putminusbox{6}{20}
\putminusbox{6}{21}
\putminusbox{7}{17}
\putminusbox{7}{20}
\putminusbox{7}{21}
\putminusbox{8}{17}
\putminusbox{8}{20}
\putminusbox{8}{21}
\putminusbox{9}{17}
\putminusbox{9}{20}
\putminusbox{9}{21}
\putminusbox{10}{17}
\putminusbox{10}{20}
\putminusbox{10}{21}
\putminusbox{11}{17}
\putminusbox{11}{20}
\putminusbox{11}{21}
\putminusbox{12}{17}
\putminusbox{12}{20}
\putminusbox{12}{21}
\putminusbox{13}{17}
\putminusbox{13}{20}
\putminusbox{13}{21}
\putminusbox{14}{17}
\putminusbox{14}{20}
\putminusbox{14}{21}
\putminusbox{15}{17}
\putminusbox{15}{20}
\putminusbox{15}{21}
\putminusbox{16}{17}
\putminusbox{16}{18}
\putminusbox{16}{20}
\putminusbox{16}{21}
\putminusbox{17}{20}
\putminusbox{17}{21}
\putminusbox{18}{20}
\putminusbox{18}{21}
\putminusbox{19}{20}
\putminusbox{19}{21}
\end{pspicture}
\end{tabular}
\end{tabular}

\resetdiagramdefaults   

\noindent
For large $n$ the inversion sets can become quite intricate, revealing patterns reminicent of cellular automata.

\mysubsection{\bf Connection with Young diagrams}\label{young diagram subsection}
Let $G(r,n)$ denote the Grassmanian of $r$-planes through the origin in $\CC^n$  (with $1\leq r\leq n$).
The cohomology ring of $G(r,n)$ has a $\ZZ$-basis consisting of {\em Schubert cycles}:
cohomology classes Poincar\'e dual to particular Zariski-closed subsets of $G(r,n)$.
Fixing a complete flag in $\CC^n$ (equivalently a Borel subgroup $B$ of $\GL_n(\CC)$), the subsets are the
closures of the points in $G(r,n)$ parameterizing those $r$-planes intersecting the elements of the flag in fixed
dimensions (equivalently the closures of the $B$-orbits).   The combinatorial object parameterizing the data
of how the $r$-planes meet the fixed flag, and therefore parameterizing the cohomology classes, are
the Young diagrams which fit into an $r\times(n-r)$ box.

A similar construction works for the variety $X=\GL_n(\CC)/B$ parameterizing complete flags in $\CC^n$.
Here the subsets are the Zariski closures of the set of points in $X$ where the elements of the flag meet
elements of the fixed flag in prescribed dimensions, or equivalently, the $B$-orbits on $X$.
The combinatorial objects parameterizing the $B$-orbits in this case are the elements of $S_{n}$, the Weyl group
of $\GL_n(\CC)$.

The Grassmanian $G(r,n)$ may be realized as $\GL_n(\CC)/P$, where $P$ is a maximal parabolic subgroup containing $B$
(which maximal subgroup depends on the value of $r$). We therefore have a quotient map
$\pi\colon X\longrightarrow G(r,n)$, and this gives rise to the following procedure.
Start with a Young diagram $\lambda$ fitting in an $r\times(n-r)$ box, take the corresponding Schubert class
$[\Sigma_{\lambda}]$ on $G(r,n)$, pull this back via $\pi$ to a cohomology class $[\Sigma_{\alpha}]$ on $X$
(with $\alpha\in S_{n}$), and finally take the inversion set of $\alpha$, as represented by a sign diagram.
Skipping the cohomology classes and showing only the combinatorial objects (Young diagram, element of $S_{n}$,
and inversion set) here is an example from the cohomology of $G(3,7)$:

\noindent
\begin{tabular}{ccccc}
\begin{tabular}{c}
\begin{pspicture}(0,-1.7677)(4,0)
\SpecialCoor
\Youngrow{0}{4}
\Youngrow{-1}{3}
\Youngrow{-2}{2}
\end{pspicture}
\end{tabular}
&
\begin{tabular}{c}
\begin{pspicture}(0.5,-0.1)(0.5,0.1)
\psset{linecolor=black,arrows=|->}
\psline(-1,0)(1,0)
\end{pspicture}
\end{tabular}
&
\begin{tabular}{c}
$\alpha=(3, 5, 7, 1, 2, 4, 6)$
\end{tabular}
&
\begin{tabular}{c}
\begin{pspicture}(-1.0,-0.1)(-1.0,0.1)
\psset{linecolor=black,arrows=|->}
\psline(-1,0)(1,0)
\end{pspicture}
\end{tabular}
&
\begin{tabular}{c}
\begin{pspicture}(-1,-3.3)(6,1)
\SpecialCoor
\drawgrid{6}
\putnums{7}
\putminusbox{1}{4}
\putminusbox{1}{5}
\putminusbox{2}{4}
\putminusbox{2}{5}
\putminusbox{2}{6}
\putminusbox{3}{4}
\putminusbox{3}{5}
\putminusbox{3}{6}
\putminusbox{3}{7}
\end{pspicture}
\end{tabular}

\end{tabular}

\noindent
The conclusion suggested by this example holds in general: the inversion set associated
to a Young diagram $\lambda$ by this procedure {\em is} that same Young diagram, rotated $45^{\circ}$.
For a class on $G(r,n)$ the top corner of the Young diagram appears between the labels $r$ and $r+1$.


\mysubsection{\bf Inflation}\label{inflation appendix}
The graphical description of inflation follows easily from the ``shuffling cards'' model.
It is again easiest to explain with an example.


\newgray{vlgray}{0.50}
\hspace{-1.5cm}
\noindent
\begin{tabular}{ccc}
\begin{tabular}{c}
\begin{pspicture}(-2,-3.5)(2,1.5)
\psset{linecolor=vlgray,doubleline=true}
\pspolygon(-2,0.5)(4,0.5)(4,-5)(-2,-5)
\rput(1,0){\small\bf Illustration of inflation}
\rput(1,-0.5){\small\bf process }
\rput[l](-1.8,-1.3){with}
\rput[l](-1,-2.0){\small $\sigma_{\phantom{1}}=(3,1,4,2)$;}
\rput[l](-1,-3.0){\small $\beta_1=(1,3,2)$;}
\rput[l](-1,-3.5){\small $\beta_2=(4,2,3,1)$;}
\rput[l](-1,-4.0){\small $\beta_3=(3,5,1,4,2)$;}
\rput[l](-1,-4.5){\small $\beta_4=(2,3,1)$.}
\end{pspicture}
\end{tabular}
& &
\begin{tabular}{c}
\begin{pspicture}(-1,-4)(14,1)
\SpecialCoor
\putplus{1}{2}
\putplus{1}{3}
\putminus{2}{3}
\putminus{4}{5}
\putminus{4}{6}
\putminus{4}{7}
\putplus{5}{6}
\putminus{5}{7}
\putminus{6}{7}
\putplus{8}{9}
\putminus{8}{10}
\putplus{8}{11}
\putminus{8}{12}
\putminus{9}{10}
\putminus{9}{11}
\putminus{9}{12}
\putplus{10}{11}
\putplus{10}{12}
\putminus{11}{12}
\putplus{13}{14}
\putminus{13}{15}
\putminus{14}{15}
\psset{linecolor=black,arrows=->}
\psline(0.5,-2.5)(0.5,-4.5)
\rput{270}(0.80,-3.5){\color{gray}\tiny Insert $\Inv{\beta_1}$}
\psline(4,-2.5)(4,-4.5)
\rput{270}(4.30,-3.5){\color{gray}\tiny Insert $\Inv{\beta_2}$}
\psline(8.5,-2.5)(8.5,-4.5)
\rput{270}(8.80,-3.5){\color{gray}\tiny Insert $\Inv{\beta_3}$}
\psline(12.5,-2.5)(12.5,-4.5)
\rput{270}(12.80,-3.5){\color{gray}\tiny Insert $\Inv{\beta_4}$}
\end{pspicture}
\end{tabular} \\
\begin{tabular}{c}
\begin{pspicture}(-0.5,-1.8)(3,1)
\SpecialCoor
\drawgrid{3}
\putminus{1}{2}
\putplus{1}{3}
\putminus{1}{4}
\putplus{2}{3}
\putplus{2}{4}
\putminus{3}{4}
\end{pspicture}
\end{tabular}
&
\begin{tabular}{c}
\begin{pspicture}(-1,-0.1)(1,0.1)
\psset{linecolor=black,arrows=->}
\psline(-1,0)(1,0)
\rput(0,0.30){\color{gray}\tiny Inflate $\Inv\sigma$}
\end{pspicture}
\end{tabular}
&
\begin{tabular}{c}
\begin{pspicture}(-1,-7.3)(14,1)
\SpecialCoor
\drawgrid{14}
\putbigminus{1}{4}{3}{7}
\putbigplus{1}{8}{3}{12}
\putbigminus{1}{13}{3}{15}
\putbigplus{4}{8}{7}{12}
\putbigplus{4}{13}{7}{15}
\putbigminus{8}{13}{12}{15}
\end{pspicture}
\end{tabular}
\end{tabular}

\noindent
In this example the fact that $\beta_1$,\ldots, $\beta_4$ are elements of $S_3$, $S_4$, $S_5$, and $S_3$
respectively tells us that the resulting inflation is an element of $S_n$ with $n=3+4+5+3=15$, and that
we should divide $\{1,\ldots, 15\}$ into the consecutive subsets $U_1=\{1,2,3\}$, $U_2=\{4,5,6,7\}$,
$U_3=\{8,9,10,11,12\}$, and $U_4=\{13,14,15\}$ of lengths $3$, $4$, $5$, and $3$ respectively.

The large blocks of $+$ and $-$ signs (indicated by the large blocks with a single $+$ or $-$) result from
permuting the subsets $U_1$,\ldots, $U_4$ as prescribed by $\sigma\in S_4$.
Explicitly, setting  $\alpha=\sigma[\beta_1,\beta_2,\beta_3,\beta_4]$, for every
$(i,j)\in\Inv\sigma$, we have $(a,b)\in\Inv\alpha$ for all $a\in U_i$, $b\in U_j$, and similarly for
$(i,j)\notin\Inv\sigma$.  Each element $(i,j)$ of $\Inv\sigma$ therefore inflates to give an $|U_i|\times |U_j|$
block in $\Inv\alpha$ (length $|U_i|$ in the northeast-southwest direction, $|U_j|$ in the northwest-southeast
direction).   For each $(a,b)\in \Delta^{+}_n$ with $a$ and $b$ in different intervals,
we thus know whether $(a,b)$ is in $\Inv\alpha$ or not.
However, as part of inflation we also permute each $U_i$ using $\beta_i$, and this tells us how to decide
on the status of those $(a,b)$ with $a,b$ in the same interval.
Visually this amounts to simply inserting the sign diagram for $\Inv{\beta_i}$ in the appropriate empty space
left by the inflation process.
This procedure is the graphical translation of Lemma~\ref{easy lemma}.

After inflating, we may leave the large blocks in the diagram to remind us of the inflation, or subdivide them
into the usual smaller squares, depending on the situation.  Thus the inflation above may be represented
(again reduced in scale to fit the page) by

\psset{unit=0.7}
\hspace{-2cm}
\noindent
\begin{tabular}{ccc}

\begin{tabular}{c}
\begin{pspicture}(-1,-7.3)(14,1)
\SpecialCoor
\drawgrid{14}
\putbigminus{1}{4}{3}{7}
\putbigplus{1}{8}{3}{12}
\putbigminus{1}{13}{3}{15}
\putbigplus{4}{8}{7}{12}
\putbigplus{4}{13}{7}{15}
\putbigminus{8}{13}{12}{15}
\putplus{1}{2}
\putplus{1}{3}
\putminus{2}{3}
\putminus{4}{5}
\putminus{4}{6}
\putminus{4}{7}
\putplus{5}{6}
\putminus{5}{7}
\putminus{6}{7}
\putplus{8}{9}
\putminus{8}{10}
\putplus{8}{11}
\putminus{8}{12}
\putminus{9}{10}
\putminus{9}{11}
\putminus{9}{12}
\putplus{10}{11}
\putplus{10}{12}
\putminus{11}{12}
\putplus{13}{14}
\putminus{13}{15}
\putminus{14}{15}
\end{pspicture}
\end{tabular}
& or &
\begin{tabular}{c}
\begin{pspicture}(-1,-7.3)(14,1)
\SpecialCoor
\drawgrid{14}
\putplus{1}{2}
\putplus{1}{3}
\putminus{1}{4}
\putminus{1}{5}
\putminus{1}{6}
\putminus{1}{7}
\putplus{1}{8}
\putplus{1}{9}
\putplus{1}{10}
\putplus{1}{11}
\putplus{1}{12}
\putminus{1}{13}
\putminus{1}{14}
\putminus{1}{15}
\putminus{2}{3}
\putminus{2}{4}
\putminus{2}{5}
\putminus{2}{6}
\putminus{2}{7}
\putplus{2}{8}
\putplus{2}{9}
\putplus{2}{10}
\putplus{2}{11}
\putplus{2}{12}
\putminus{2}{13}
\putminus{2}{14}
\putminus{2}{15}
\putminus{3}{4}
\putminus{3}{5}
\putminus{3}{6}
\putminus{3}{7}
\putplus{3}{8}
\putplus{3}{9}
\putplus{3}{10}
\putplus{3}{11}
\putplus{3}{12}
\putminus{3}{13}
\putminus{3}{14}
\putminus{3}{15}
\putminus{4}{5}
\putminus{4}{6}
\putminus{4}{7}
\putplus{4}{8}
\putplus{4}{9}
\putplus{4}{10}
\putplus{4}{11}
\putplus{4}{12}
\putplus{4}{13}
\putplus{4}{14}
\putplus{4}{15}
\putplus{5}{6}
\putminus{5}{7}
\putplus{5}{8}
\putplus{5}{9}
\putplus{5}{10}
\putplus{5}{11}
\putplus{5}{12}
\putplus{5}{13}
\putplus{5}{14}
\putplus{5}{15}
\putminus{6}{7}
\putplus{6}{8}
\putplus{6}{9}
\putplus{6}{10}
\putplus{6}{11}
\putplus{6}{12}
\putplus{6}{13}
\putplus{6}{14}
\putplus{6}{15}
\putplus{7}{8}
\putplus{7}{9}
\putplus{7}{10}
\putplus{7}{11}
\putplus{7}{12}
\putplus{7}{13}
\putplus{7}{14}
\putplus{7}{15}
\putplus{8}{9}
\putminus{8}{10}
\putplus{8}{11}
\putminus{8}{12}
\putminus{8}{13}
\putminus{8}{14}
\putminus{8}{15}
\putminus{9}{10}
\putminus{9}{11}
\putminus{9}{12}
\putminus{9}{13}
\putminus{9}{14}
\putminus{9}{15}
\putplus{10}{11}
\putplus{10}{12}
\putminus{10}{13}
\putminus{10}{14}
\putminus{10}{15}
\putminus{11}{12}
\putminus{11}{13}
\putminus{11}{14}
\putminus{11}{15}
\putminus{12}{13}
\putminus{12}{14}
\putminus{12}{15}
\putplus{13}{14}
\putminus{13}{15}
\putminus{14}{15}
\end{pspicture}
\end{tabular}

\end{tabular}

\resetdiagramdefaults   

\mysubsection{\bf Relation with ideas from the text}\label{revisit A}
In this subsection we use sign diagrams to illustrate some of the ideas from the main article.

\medskip
\noindent
{\bf If $\sigma$'s and $\beta$'s give a decomposition, so do the inflations.}
As in \S\ref{subsection with main result}
suppose that we divide $\{1,\ldots, n\}$ into $m$ consecutive intervals $U_1$,\ldots, $U_m$,
choose $\sigma_i\in S_m$, $i=1$,\ldots, $r$  such that $\Delta^{+}_m = \sqcup_i \Inv{\sigma_i}$,
and furthermore choose $\beta_{ij}\in S_{|U_j|}$ for $i=1,\ldots, r$, $j=1,\ldots, m$ such that
$\Delta^{+}_{|U_j|}=\sqcup_i \Inv{\beta_{ij}}$ for each $j$.  Then it should be clear from the
visual description of the inflation procedure that this implies the decomposition
$\Delta^{+}_n = \sqcup_i \sigma_i[\beta_{i1},\ldots, \beta_{im}]$.

As an exercise the decomposition from Example~\ref{example}, which is constructed in such a manner,
is pictured below.  The reader is invited to identify the diagrams inflated and inserted in each of the three
pieces of the decomposition and check that they satisfy the hypotheses above.


\darkergrays
\psset{unit=0.75}

\noindent
\begin{tabular}{c@{$\sqcup$}c@{$\sqcup$}c}
\begin{tabular}{c}
\begin{pspicture}(-1,-3.8)(7,1)
\SpecialCoor
\drawgrid{7}
\putbigminusbox{1}{4}{3}{4}
\putbigplusbox{1}{5}{3}{6}
\putbigminusbox{1}{7}{3}{8}
\putbigplusbox{4}{5}{4}{6}
\putbigplusbox{4}{7}{4}{8}
\putbigminusbox{5}{7}{6}{8}
\putminusbox{7}{8}
\end{pspicture}
\end{tabular}
&
\begin{tabular}{c}
\begin{pspicture}(-0.5,-3.8)(7,1)
\SpecialCoor
\drawgrid{7}
\putbigplusbox{1}{4}{3}{4}
\putbigminusbox{1}{5}{3}{6}
\putbigplusbox{1}{7}{3}{8}
\putbigminusbox{4}{5}{4}{6}
\putbigminusbox{4}{7}{4}{8}
\putbigplusbox{5}{7}{6}{8}
\putminusbox{1}{2}
\putminusbox{1}{3}
\end{pspicture}
\end{tabular}
&
\begin{tabular}{c}
\begin{pspicture}(-1,-3.8)(7,1)
\SpecialCoor
\drawgrid{7}
\putbigplusbox{1}{4}{3}{4}
\putbigplusbox{1}{5}{3}{6}
\putbigplusbox{1}{7}{3}{8}
\putbigplusbox{4}{5}{4}{6}
\putbigplusbox{4}{7}{4}{8}
\putbigplusbox{5}{7}{6}{8}
\putminusbox{2}{3}
\putminusbox{5}{6}
\end{pspicture}
\end{tabular}
\end{tabular}

\resetdiagramdefaults   

\noindent
Theorem~\ref{main theorem}, the central result of the paper, shows conversely is that {\em every} decomposition
of $\Delta^{+}_n$ admits a recursive description by inflations satisfying the above conditions.
The result of the theorem is more precise, identifying a canonical such description satisfying additional properties
well suited to recursive analysis.

\medskip
\noindent
{\bf Rules for indecomposibility.}
If $\alpha=\sigma[\beta_1,\ldots, \beta_m]$ with each $\beta_i\in S_{z_i}$, then it is clear from the
graphical procedure for inflation that we may use this description to decompose $\Inv\alpha$, as
shown in the following example.


\darkergrays
\psset{unit=0.4}

\hspace{-1.0cm}
\noindent
\begin{tabular}{c@{$=$}c@{$\sqcup$}c@{$\sqcup$}c}
\begin{tabular}{c}
\begin{pspicture}(-0.5,-5.8)(11,1)
\SpecialCoor
\drawgrid{11}
\putbigminusbox{1}{3}{2}{6}
\putbigminusbox{1}{7}{2}{8}
\putbigminusbox{1}{9}{2}{12}
\putbigplusbox{3}{7}{6}{8}
\putbigplusbox{3}{9}{6}{12}
\putbigminusbox{7}{9}{8}{12}
\putplusbox{1}{2}
\putplusbox{3}{4}
\putplusbox{3}{5}
\putplusbox{3}{6}
\putminusbox{4}{5}
\putminusbox{4}{6}
\putminusbox{5}{6}
\putplusbox{7}{8}
\putplusbox{9}{10}
\putplusbox{9}{11}
\putminusbox{9}{12}
\putplusbox{10}{11}
\putminusbox{10}{12}
\putminusbox{11}{12}
\end{pspicture}
\end{tabular}
&
\begin{tabular}{c}
\begin{pspicture}(-0.5,-5.8)(11,1)
\SpecialCoor
\drawgrid{11}
\putbigminusbox{1}{3}{2}{6}
\putbigminusbox{1}{7}{2}{8}
\putbigminusbox{1}{9}{2}{12}
\putbigplusbox{3}{7}{6}{8}
\putbigplusbox{3}{9}{6}{12}
\putbigminusbox{7}{9}{8}{12}
\end{pspicture}
\end{tabular}
&
\begin{tabular}{c}
\begin{pspicture}(-0.5,-5.8)(11,1)
\SpecialCoor
\drawgrid{11}
\putbigplusbox{1}{3}{2}{6}
\putbigplusbox{1}{7}{2}{8}
\putbigplusbox{1}{9}{2}{12}
\putbigplusbox{3}{7}{6}{8}
\putbigplusbox{3}{9}{6}{12}
\putbigplusbox{7}{9}{8}{12}
\putminusbox{4}{5}
\putminusbox{4}{6}
\putminusbox{5}{6}
\end{pspicture}
\end{tabular}
&
\begin{tabular}{c}
\begin{pspicture}(-0.5,-5.8)(11,1)
\SpecialCoor
\drawgrid{11}
\putbigplusbox{1}{3}{2}{6}
\putbigplusbox{1}{7}{2}{8}
\putbigplusbox{1}{9}{2}{12}
\putbigplusbox{3}{7}{6}{8}
\putbigplusbox{3}{9}{6}{12}
\putbigplusbox{7}{9}{8}{12}
\putminusbox{9}{12}
\putminusbox{10}{12}
\putminusbox{11}{12}
\end{pspicture}
\end{tabular}

\end{tabular}

\resetdiagramdefaults   

\noindent
In formulas this kind of decomposition is written
$$\Inv\alpha = \Inv{\sigma[\Id_{z_1},\ldots, \Id_{z_m}]} \sqcup \textstyle \bigsqcup_i \displaystyle
\Inv{\Id_m[\Id_{z_1},\ldots, \Id_{z_{i-1}}, \beta_i,\Id_{z_{i+1}},\ldots, \Id_{z_m}]}.$$

For the element $\alpha$ to be irreducible 
it follows from the inflation decomposition that at most one of $\sigma$, $\beta_1$,\ldots, $\beta_m$
can be different from the identity, and that this nontrivial element must itself be irreducible.
This is the content of Corollary~\ref{easy corollary}.


\medskip
\noindent
{\bf Rules for uniqueness in inflations.}
The sign diagram for $\wo[m]$ consists entirely of minus signs.  If $\alpha$ is of the form
$\alpha=\wo[m][\beta_1,\ldots, \beta_m]$ then of course these minus signs are inflated when
making the sign diagram of $\alpha$, and surround the sign diagrams of $\beta_1$,\ldots, $\beta_m$.
If some $\beta_j$ also has this form (i.e., $\beta_j=\wo_{m'}[\tau_1,\ldots, \tau_{m'}]$ for some $m'$)
then some of the minus signs from $\Inv{\beta_j}$ may be merged with the minus signs from the inflation,
as in the following example.



\psset{unit=0.70}

\hspace{-0.75cm}
\noindent
\begin{tabular}{c@{$=$}c@{$=$}c}
\begin{tabular}{c}
\begin{pspicture}(-0.5,-4.3)(8,1)
\SpecialCoor
\drawgrid{8}
\putbigminus{1}{6}{5}{7}
\putbigminus{1}{8}{5}{9}
\putbigminus{6}{8}{7}{9}
\putplus{1}{2}
\putminus{1}{3}
\putminus{1}{4}
\putminus{1}{5}
\putminus{2}{3}
\putminus{2}{4}
\putminus{2}{5}
\putplus{3}{4}
\putminus{3}{5}
\putminus{4}{5}
\putplus{6}{7}
\putplus{8}{9}
\end{pspicture}
\end{tabular}
&
\begin{tabular}{c}
\begin{pspicture}(-1,-4.3)(8,1)
\SpecialCoor
\drawgrid{8}
\putplus{1}{2}
\putminus{1}{3}
\putminus{1}{4}
\putminus{1}{5}
\putminus{1}{6}
\putminus{1}{7}
\putminus{1}{8}
\putminus{1}{9}
\putminus{2}{3}
\putminus{2}{4}
\putminus{2}{5}
\putminus{2}{6}
\putminus{2}{7}
\putminus{2}{8}
\putminus{2}{9}
\putplus{3}{4}
\putminus{3}{5}
\putminus{3}{6}
\putminus{3}{7}
\putminus{3}{8}
\putminus{3}{9}
\putminus{4}{5}
\putminus{4}{6}
\putminus{4}{7}
\putminus{4}{8}
\putminus{4}{9}
\putminus{5}{6}
\putminus{5}{7}
\putminus{5}{8}
\putminus{5}{9}
\putplus{6}{7}
\putminus{6}{8}
\putminus{6}{9}
\putminus{7}{8}
\putminus{7}{9}
\putplus{8}{9}
\end{pspicture}
\end{tabular}
&
\begin{tabular}{c}
\begin{pspicture}(-1,-4.3)(8,1)
\SpecialCoor
\drawgrid{8}
\putbigminus{1}{3}{2}{4}
\putbigminus{1}{5}{2}{5}
\putbigminus{1}{6}{2}{7}
\putbigminus{1}{8}{2}{9}
\putbigminus{3}{5}{4}{5}
\putbigminus{3}{6}{4}{7}
\putbigminus{3}{8}{4}{9}
\putbigminus{5}{6}{5}{7}
\putbigminus{5}{8}{5}{9}
\putbigminus{6}{8}{7}{9}
\putplus{1}{2}
\putplus{3}{4}
\putplus{6}{7}
\putplus{8}{9}
\end{pspicture}
\end{tabular}
\end{tabular}

\resetdiagramdefaults   

\noindent
There is an identical problem (with the roles of the + and - signs reversed)
for permutations of the form $\alpha=\Id_m[\beta_1,\ldots, \beta_m]$, where some $\beta_j$ is also of the
form $\beta_j=\Id_{m'}[\tau_1,\ldots, \tau_{m'}]$.
In such cases we obtain uniqueness of the representation as an inflation by requiring that the diagram of $\wo[m]$ or
$\Id_m$ which is inflated
account for as many of the $-$ or $+$ signs in $\Inv{\alpha}$ as possible (i.e., that $m$ be as large as possible).
In the example considered the diagram on the right is the one corresponding to the maximal $\wo[m]$, with $m=5$.

Somewhat the opposite problem occurs for representations of the form
$\alpha=\sigma[\beta_1,\ldots, \beta_m]$ with $\sigma\neq \Id_m,\wo[m]$. In this case it may be that $\sigma$
can itself be represented in a non-trivial way as an inflation, and this description can then be propagated
upwards to give a different representation of $\alpha$ as an inflation (i.e., if
$\sigma=\gamma[\delta_1,\ldots, \delta_r]$ then we may write $\alpha=\gamma[\tau_1,\ldots, \tau_s]$ for some $\tau_i$).
Here is an example where this occurs.


\darkergrays
\psset{unit=0.4}

\hspace{-0.5cm}
\noindent
\begin{tabular}{c@{$=$}c@{$=$}c}
\begin{tabular}{c}
\begin{pspicture}(-0.5,-7.8)(14.5,1)
\SpecialCoor
\drawgrid{15}
\putbigminusbox{1}{3}{2}{4}
\putbigminusbox{1}{5}{2}{6}
\putbigminusbox{1}{7}{2}{8}
\putbigplusbox{1}{9}{2}{10}
\putbigplusbox{1}{11}{2}{12}
\putbigminusbox{1}{13}{2}{14}
\putbigminusbox{1}{15}{2}{16}
\putbigminusbox{3}{5}{4}{6}
\putbigminusbox{3}{7}{4}{8}
\putbigplusbox{3}{9}{4}{10}
\putbigplusbox{3}{11}{4}{12}
\putbigminusbox{3}{13}{4}{14}
\putbigminusbox{3}{15}{4}{16}
\putbigplusbox{5}{7}{6}{8}
\putbigplusbox{5}{9}{6}{10}
\putbigplusbox{5}{11}{6}{12}
\putbigplusbox{5}{13}{6}{14}
\putbigplusbox{5}{15}{6}{16}
\putbigplusbox{7}{9}{8}{10}
\putbigplusbox{7}{11}{8}{12}
\putbigplusbox{7}{13}{8}{14}
\putbigplusbox{7}{15}{8}{16}
\putbigminusbox{9}{11}{10}{12}
\putbigminusbox{9}{13}{10}{14}
\putbigminusbox{9}{15}{10}{16}
\putbigminusbox{11}{13}{12}{14}
\putbigminusbox{11}{15}{12}{16}
\putbigplusbox{13}{15}{14}{16}
\putminusbox{5}{6}
\putminusbox{7}{8}
\putminusbox{9}{10}
\putminusbox{13}{14}
\end{pspicture}
\end{tabular}
&
\begin{tabular}{c}
\begin{pspicture}(-0.5,-7.8)(14.5,1)
\SpecialCoor
\drawgrid{15}
\putminusbox{1}{3}
\putminusbox{1}{4}
\putminusbox{1}{5}
\putminusbox{1}{6}
\putminusbox{1}{7}
\putminusbox{1}{8}
\putminusbox{1}{13}
\putminusbox{1}{14}
\putminusbox{1}{15}
\putminusbox{1}{16}
\putminusbox{2}{3}
\putminusbox{2}{4}
\putminusbox{2}{5}
\putminusbox{2}{6}
\putminusbox{2}{7}
\putminusbox{2}{8}
\putminusbox{2}{13}
\putminusbox{2}{14}
\putminusbox{2}{15}
\putminusbox{2}{16}
\putminusbox{3}{5}
\putminusbox{3}{6}
\putminusbox{3}{7}
\putminusbox{3}{8}
\putminusbox{3}{13}
\putminusbox{3}{14}
\putminusbox{3}{15}
\putminusbox{3}{16}
\putminusbox{4}{5}
\putminusbox{4}{6}
\putminusbox{4}{7}
\putminusbox{4}{8}
\putminusbox{4}{13}
\putminusbox{4}{14}
\putminusbox{4}{15}
\putminusbox{4}{16}
\putminusbox{5}{6}
\putminusbox{7}{8}
\putminusbox{9}{10}
\putminusbox{9}{11}
\putminusbox{9}{12}
\putminusbox{9}{13}
\putminusbox{9}{14}
\putminusbox{9}{15}
\putminusbox{9}{16}
\putminusbox{10}{11}
\putminusbox{10}{12}
\putminusbox{10}{13}
\putminusbox{10}{14}
\putminusbox{10}{15}
\putminusbox{10}{16}
\putminusbox{11}{13}
\putminusbox{11}{14}
\putminusbox{11}{15}
\putminusbox{11}{16}
\putminusbox{12}{13}
\putminusbox{12}{14}
\putminusbox{12}{15}
\putminusbox{12}{16}
\putminusbox{13}{14}
\end{pspicture}
\end{tabular}
&
\begin{tabular}{c}
\begin{pspicture}(-0.5,-7.8)(14.5,1)
\SpecialCoor
\drawgrid{15}
\putbigminusbox{1}{5}{4}{8}
\putbigplusbox{1}{9}{4}{12}
\putbigminusbox{1}{13}{4}{16}
\putbigplusbox{5}{9}{8}{12}
\putbigplusbox{5}{13}{8}{16}
\putbigminusbox{9}{13}{12}{16}
\putminusbox{1}{3}
\putminusbox{1}{4}
\putminusbox{2}{3}
\putminusbox{2}{4}
\putminusbox{5}{6}
\putminusbox{7}{8}
\putminusbox{9}{10}
\putminusbox{9}{11}
\putminusbox{9}{12}
\putminusbox{10}{11}
\putminusbox{10}{12}
\putminusbox{13}{14}
\end{pspicture}
\end{tabular}
\end{tabular}

\resetdiagramdefaults   

In these cases we obtain uniqueness of the representation by requiring that $\sigma$ be simple.  This amounts
to looking for $\sigma\in S_m$ with $m$ as small as possible.  In the example considered the diagram on
the right is the one with smallest $m$, with $m=4$.
The two goals ($m$ as large as possible and $m$
as small as possible) are clearly in opposition, and it may occur that when trying to reduce $\sigma$ to be as
small as possible, we arrive at $\sigma=\Id_{m}$ or $\wo[m]$, and then realize that we now have to look for
such a description with $m$ as large as possible.  Nonetheless, as Theorem~\ref{simple form theorem} guarantees,
every $\alpha$ has a unique representation as in inflation $\alpha=\sigma[\beta_1,\ldots, \beta_m]$
with either $\sigma$ simple with $m \geq 4$ or $\sigma$ one of $\Id_m$ or $\wo[m]$ and $m$ as large as possible.

\medskip
\noindent
{\bf Recursion for type $A$ maximal decompositions.} In \S\ref{enumerative section} we considered the problem
of enumerating the decompositions of $\Delta^{+}_n$ of maximal length, i.e., into a decomposition of $n-1$
nonempty inversion sets.  Here is a picture of such a decomposition with $n=8$.



\newgray{numgray}{0.5}
\darkergrays
\psset{unit=0.50}
\newcommand{\decompnum}[1]{%
}

\noindent
\begin{tabular}{c@{$\sqcup$}c@{$\sqcup$}c@{$\sqcup$}c}
\begin{tabular}{c}
\begin{pspicture}(-0.5,-4.3)(8,1)
\SpecialCoor
\drawgrid{8}
\putminusbox{1}{2}
\decompnum{1}
\end{pspicture}
\end{tabular}
&
\begin{tabular}{c}
\begin{pspicture}(-1,-4.3)(8,1)
\SpecialCoor
\drawgrid{8}
\putminusbox{1}{3}
\putminusbox{1}{4}
\putminusbox{1}{5}
\putminusbox{2}{3}
\putminusbox{2}{4}
\putminusbox{2}{5}
\decompnum{2}
\end{pspicture}
\end{tabular}
&
\begin{tabular}{c}
\begin{pspicture}(-1,-4.3)(8,1)
\SpecialCoor
\drawgrid{8}
\putminusbox{3}{4}
\decompnum{3}
\end{pspicture}
\end{tabular}
&
\begin{tabular}{c}
\begin{pspicture}(-1,-4.3)(8,1)
\SpecialCoor
\drawgrid{8}
\putminusbox{3}{5}
\putminusbox{4}{5}
\decompnum{4}
\end{pspicture}
\end{tabular}
\\
\begin{tabular}{c}
\begin{pspicture}(-0.5,-4.3)(8,1)
\SpecialCoor
\drawgrid{8}
\putminusbox{1}{6}
\putminusbox{1}{7}
\putminusbox{1}{8}
\putminusbox{1}{9}
\putminusbox{2}{6}
\putminusbox{2}{7}
\putminusbox{2}{8}
\putminusbox{2}{9}
\putminusbox{3}{6}
\putminusbox{3}{7}
\putminusbox{3}{8}
\putminusbox{3}{9}
\putminusbox{4}{6}
\putminusbox{4}{7}
\putminusbox{4}{8}
\putminusbox{4}{9}
\putminusbox{5}{6}
\putminusbox{5}{7}
\putminusbox{5}{8}
\putminusbox{5}{9}
\decompnum{5}
\end{pspicture}
\end{tabular}
&
\begin{tabular}{c}
\begin{pspicture}(-1,-4.3)(8,1)
\SpecialCoor
\drawgrid{8}
\putminusbox{6}{7}
\putminusbox{6}{8}
\decompnum{6}
\end{pspicture}
\end{tabular}
&
\begin{tabular}{c}
\begin{pspicture}(-1,-4.3)(8,1)
\SpecialCoor
\drawgrid{8}
\putminusbox{7}{8}
\decompnum{7}
\end{pspicture}
\end{tabular}
&
\begin{tabular}{c}
\begin{pspicture}(-1,-4.3)(8,1)
\SpecialCoor
\drawgrid{8}
\putminusbox{6}{9}
\putminusbox{7}{9}
\putminusbox{8}{9}
\decompnum{8}
\end{pspicture}
\end{tabular}
\end{tabular}

\resetdiagramdefaults   

\noindent
The example is relatively small, but is enough to infer the general structure of the problem.

The key is the diagram containing the highest root $(1,n)$ (i.e, the bottom vertex of the triangle), which
in the example is the diagram at lower left. Because the inversion set also contains exactly one simple
root it follows that it must consist of the entire rectangle with corners $(1,n)$ and that simple root.
To see why we look at the example.  In the diagram at lower left the only simple root inverted is $(5,6)$.
This means that the numbers $\{1,2,3,4,5\}$
all retain their relative order when $\alpha$ is applied, and that the same holds for $\{6,7,8,9\}$.
Combined with the fact that the inversion set contains $(1,9)$, so that
$\alpha(9)<\alpha(1)$, we deduce that $\alpha$ swaps the two intervals, i.e., that $\alpha=(6,7,8,9,1,2,3,4,5)$, and
therefore that $\Inv\alpha$ is the rectangle with corners $(5,6)$ and $(1,9)$.

Returning to the general case,
removing the rectangle containing the highest root disconnects the diagram into two smaller diagrams,
each of which must be filled in by the other parts of the decomposition.
The number of maximal decompositions of each of these smaller rectangles may
be computed inductively.  Thus if we organize the counting of the number of maximal decompositions of
$\Delta^{+}_{n+1}$ by the rectangle containing the highest root,
we immediately arrive at the recursive relation $\CatA{n} = \sum_{k=1}^{n} \CatA{k-1}\CatA{n-k}$.
This leads quickly to the result that the enumerative problem is solved by the Catalan numbers.

By induction one also deduces that every diagram in a maximal decomposition is a rectangle.   In the example
all but two of these rectangles are reduced to lines or single squares, but this is simply because the example
is small.

\mysubsection{\texorpdfstring{\bf Diagrams for types $B$ and $C$}{\bf Diagrams for types B and C}}
For us the sign diagrams have been an extremely useful method of visualizing or discovering arguments in the
type $A$ case, and so it is natural to try and extend them to other types.
Our method of displaying the type $A$ inversion sets
arose from picturing what Weyl group elements $w$ do to an upper triangular Borel subgroup (this perspective has not
been explained in the appendix), and one could try and repeat this idea in the other cases.   However, in types
$B/C$ it turns out to be easier to use the group homomorphisms $\iota: \W(B_n)\hookrightarrow S_{2n+1}$ and
$\iota: \W(C_n)\hookrightarrow S_{2n}$ from \S\ref{section BCD}
and, rather than try and picture the inversion set of an element $\alpha\in \W(B_n)\cong\W(C_n)$ directly, to instead
study the inversion set of $\iota(\alpha)$, the image of $\alpha$ under one of the homomorphisms.  We first
briefly recall the groups and the homomorphisms.

The Weyl groups $\W(B_n)$ and $\W(C_n)$ can be identified with the {\em signed permutations} of
$\varepsilon_1$,\ldots, $\varepsilon_n$, i.e., we are allowed not only to permute the elements, but also multiply them
by $\pm$1.  Here is a sample element $\alpha$ of $\W(B_3)\cong \W(C_3)$:
$$
\alpha:\left\{
\begin{array}{c}
\ep_1 \longrightarrow -\ep_2 \\
\ep_2 \longrightarrow \phantom{-}\ep_3 \\
\ep_3 \longrightarrow \phantom{-}\ep_1 \\
\end{array}
\right..
$$

We can promote $\alpha\in \W(C_n)$ to an element $\iota(\alpha) \in S_{2n}$ by considering
$\ep_1$, $\ep_2$, \ldots, $\ep_n$, $-\ep_n$, $-\ep_{n-1}$,\ldots, $-\ep_1$ to be distinct symbols, and using
the rule given by $\alpha$ (and linearity) to deduce a permutation of these $2n$ elements.  For example, the element
$\alpha\in\W(C_3)$ shown above corresponds to
$$
\iota(\alpha):\left\{
\begin{array}{c}
\phantom{-}\ep_1 \longrightarrow -\ep_2 \\
\phantom{-}\ep_2 \longrightarrow \phantom{-}\ep_3 \\
\phantom{-}\ep_3 \longrightarrow \phantom{-}\ep_1 \\
-\ep_3 \longrightarrow -\ep_1 \\
-\ep_2 \longrightarrow -\ep_3 \\
-\ep_1 \longrightarrow \phantom{-}\ep_2 \\
\end{array}
\right..
$$
Using the order $\ep_1,\ep_2, \ep_3, -\ep_3,-\ep_2, -\ep_1$, this is the element
$\iota(\alpha)=(5,3,1,6,4,2)\in S_{6}$.  We can similarly obtain an element in $S_{2n+1}$ by adding the
element $0$ (in the order $\ep_1$, \ldots, $\ep_n$, 0, $-\ep_n$, \ldots, $-\ep_1$) and simply fixing $0$.
In the example considered, this gives the element $(6,3,1,4,7,5,2)\in S_{7}$.   (The way we have presented this
rule `adding $0$' seems somewhat arbitrary, but it does make sense from the natural description of the complete
flag variety of $B_n$ type as a subvariety of the complete flag variety of $A_{2n}$ type.)
We will use the symbol $\iota(\alpha)$ for the image of $\alpha\in \W(C_n)\cong\W(B_n)$ in $S_{2n}$ or
$S_{2n+1}$ under either of these homomorphisms, and trust that the resulting permutation
(of either an even or odd number of elements) will reveal which homomorphism was intended.

Here are the sign diagrams for $\iota(\alpha)$ (in $S_6$ and $S_7$) for the sample element of $\W(C_3)$
considered above.


\begin{centering}
\begin{tabular}{ccc}
\begin{tabular}{c}
\begin{pspicture}(-1,-3.3)(5,1)
\SpecialCoor
\drawgrid{5}
\putminus{1}{2}
\putminus{1}{3}
\putplus{1}{4}
\putminus{1}{5}
\putminus{1}{6}
\putminus{2}{3}
\putplus{2}{4}
\putplus{2}{5}
\putminus{2}{6}
\putplus{3}{4}
\putplus{3}{5}
\putplus{3}{6}
\putminus{4}{5}
\putminus{4}{6}
\putminus{5}{6}
\end{pspicture}
\end{tabular}
&  &
\begin{tabular}{c}
\begin{pspicture}(-1,-3.3)(6,1)
\SpecialCoor
\drawgrid{6}
\putminus{1}{2}
\putminus{1}{3}
\putminus{1}{4}
\putplus{1}{5}
\putminus{1}{6}
\putminus{1}{7}
\putminus{2}{3}
\putplus{2}{4}
\putplus{2}{5}
\putplus{2}{6}
\putminus{2}{7}
\putplus{3}{4}
\putplus{3}{5}
\putplus{3}{6}
\putplus{3}{7}
\putplus{4}{5}
\putplus{4}{6}
\putminus{4}{7}
\putminus{5}{6}
\putminus{5}{7}
\putminus{6}{7}
\end{pspicture}
\end{tabular}
\end{tabular} \\
\end{centering}

\noindent
The images of the injective homomorphisms $\W(C_n) \longrightarrow S_{2n}$ and $\W(B_n)\longrightarrow S_{2n+1}$
turn out to be precisely those elements whose sign diagram is symmetric about the vertical centre line, and the
basic idea is to simply study such `symmetric' sign diagrams and the corresponding elements of $S_{2n}$ and
$S_{2n+1}$.

One point is worth stating explicitly: given $\alpha\in\W(C_n)$ or $\W(B_n)$, the inversion
set $\Inv\alpha$ is a subset of $\Delta^{+}_{C_n}$ (or $\Delta^{+}_{B_n}$), while the inversion set
$\Inv{\iota(\alpha)}$ is a subset of $\Delta^{+}_{2n}$ or $\Delta^{+}_{2n+1}$, and these sets can be
quite different. For instance these sets almost never have the same number of elements.  (This is evident in the
example above, where inversion sets for $\iota(\alpha)$ in $S_6$ and $S_7$ don't have the same number of elements
as each other and so could not both agree with the number of elements in $\Inv{\alpha}$.)  More importantly, the ideas
of ``indecomposable'', ``decomposition'', ``disjoint'', or  ``simple''
could potentially be quite different in $\Delta^{+}_{C_n}$ and $\Delta^{+}_{2n}$ (or $\Delta^{+}_{B_n}$ and
$\Delta_{2n+1}^+$), and one of the main things we need to check is that in fact they are not.



\darkergrays
\psset{unit=0.4}

\hfill
\begin{tabular}{c}
\begin{pspicture}(-0.5,-9.8)(17,1)
\SpecialCoor
\drawgrid{19}
\putbigplusbox{1}{4}{3}{7}
\putbigplusbox{1}{8}{3}{13}
\putbigminusbox{1}{14}{3}{17}
\putbigplusbox{1}{18}{3}{20}
\putbigminusbox{4}{8}{7}{13}
\putbigminusbox{4}{14}{7}{17}
\putbigminusbox{4}{18}{7}{20}
\putbigminusbox{8}{14}{13}{17}
\putbigplusbox{8}{18}{13}{20}
\putbigplusbox{14}{18}{17}{20}
\putminusbox{1}{3}
\putminusbox{2}{3}
\putminusbox{4}{5}
\putminusbox{4}{6}
\putminusbox{4}{7}
\putminusbox{5}{6}
\putminusbox{8}{9}
\putminusbox{8}{10}
\putminusbox{8}{12}
\putminusbox{8}{13}
\putminusbox{9}{10}
\putminusbox{9}{13}
\putminusbox{11}{12}
\putminusbox{11}{13}
\putminusbox{12}{13}
\putminusbox{14}{17}
\putminusbox{15}{16}
\putminusbox{15}{17}
\putminusbox{16}{17}
\putminusbox{18}{19}
\putminusbox{18}{20}
\end{pspicture}
\end{tabular}

\resetdiagramdefaults   

\vspace{-3.3cm}
\parshape 4 0cm 9.0cm 0cm 9.5cm 0cm 10.0cm 0cm 10.5cm
In order to even define ``simple'' we need to have a notion of inflation, and to do this we simply use the
inflation procedure in $S_{2n}$ or $S_{2n+1}$ but require that all the data describing the inflation
be `symmetric'.  At right is an example.

\parshape 4 0cm 11.0cm 0cm 11.5cm 0cm 12.0cm 0cm \textwidth
For the data describing an inflation $\iota(\alpha)=\sigma[\beta_1,\ldots, \beta_m]$ to be symmetric means that:
(i) $\sigma$ is symmetric;
(ii) the collection of intervals $U_1$,\ldots, $U_m$ are symmetric (i.e, interchanged by the operation of reversing
$1$, \ldots, $2n$ or $1$, \ldots, $2n+1$);
(iii) if there is an interval $U_j$ which is itself symmetric (i.e., straddles the centre line)
then the corresponding $\beta_j$ must be symmetric; and (iv) for all other intervals $U_j$ the sign diagram
for $\beta_j$ must be the mirror image of the sign diagram for $\beta_{m+1-j}$.   The example presented above
has all these features.   Conditions (ii), (iii) and (iv) may be summarized by the condition that
$\beta_{m+1-j}=\wo[{{z_j}}]\beta_{j}\wo[{{z_j}}]$ for $j=1$,\ldots, $m$.

Propositions~\ref{proposition 43} and \ref{proposition 41}
contain the useful result that if a symmetric $\iota(\alpha)$ can be represented nontrivially as an inflation,
it can be represented nontrivially as an inflation with symmetric data.
With the idea of inflation in place, we now define
an element $\alpha\in\W(B_n)$ or $\W(C_n)$ to be simple if the corresponding $\iota(\alpha)$ is simple
in $S_{2n}$ or $S_{2n+1}$.

In \S\ref{section BCD} the following results are established showing that the intrinsic
notions for an inversion set $\Inv{\alpha}$ with $\alpha\in\W(C_n)\cong\W(B_n)$ in type $B/C$
agree with the the type $A$ notions of the corresponding element $\iota(\alpha)$ in $S_{2n}$ or $S_{2n+1}$.

\begin{itemize}
\item[(i)] Corollary~\ref{corollary 42}:
$\alpha$ is indecomposable if and only if $\iota(\alpha)$ is decomposable;
$\Inv{\alpha_1}$ and $\Inv{\alpha_2}$ are disjoint if and only if
$\Inv{\iota(\alpha_1)}$ and $\Inv{\iota(\alpha_2)}$ are disjoint; $\Delta^{+}_{B_n} = \sqcup_i \Inv{\alpha_i}$
if and only if $\Delta_{2n+1}^{+}=\sqcup_i \Inv{\iota(\alpha_i)}$ (respectively
$\Delta^{+}_{C_n} = \sqcup_i \Inv{\alpha_i}$
if and only if $\Delta_{2n}^{+}=\sqcup_i \Inv{\iota(\alpha_i)}$).

\medskip
\item[(ii)]  Proposition~\ref{proposition 44}:
$\alpha$ is simple if and only if $\iota(\alpha)$ is simple.
\end{itemize}

With these results, one deduces Theorem~\ref{symmetric main theorem} which is the type $B/C$ version of
Theorem~\ref{main theorem}.
The arguments and pictures used in \S\ref{revisit A} also extend in an appropriate way to the $B/C$ case.
For instance, one may also deduce a uniqueness statement for representation as a symmetric inflation, paralleling that
of Theorem~\ref{simple form theorem}, or recursion relations for the type $B/C$ Catalan numbers
(Proposition~\ref{B-C Catalan recursion}).

\bigskip
\bigskip
\bigskip

\setcounter{section}{9}
\section{Acknowledgements}
This work was partially supported by NSERC.  In particular, most of it was done
with the support of NSERC's Undergraduate Summer Research Awards program.

\end{document}